\newtheorem{theorem}{Theorem}[section]
\newtheorem{proposition}[theorem]{Proposition}
\newtheorem{lemma}[theorem]{Lemma}
\newtheorem{corollary}[theorem]{Corollary}
\newtheorem{property}{Property}
\newtheorem{remark}[theorem]{Remark}
\numberwithin{equation}{section}
\newcommand{\Z}{\mathbf{Z}}
\newcommand{\R}{\mathbf{R}}
\newcommand{\eps}{\varepsilon}
\newcommand{\oo}{\infty}
\newcommand{\ov}{\overline}
\newcommand{\what}{\widehat}
\newcommand{\dw}{\downarrow}
\newcommand{\up}{\uparrow}
\newcommand{\longto}{\longrightarrow}
\newcommand{\pt}{\partial}
\newcommand{\Id}{I\!d}
\newcommand{\om}{\omega}
\newcommand{\hel} 
{
\raisebox{-.25pt}{\!\!
\hskip2.5pt{\vrule height6.5pt width.5pt depth0pt}
\hskip-3.5pt\vbox{\hrule height.5pt width6.5pt depth0pt}
\hskip-.5pt
}
\!}
\newcommand{\restr}{\hel}
\newcommand{\pf} {\,\!_\#\,} %{\raisebox{-1pt}{\scriptsize \#}} %
\newcommand{\pb}{\,\!^\#\,}%{\raisebox{4pt}{\scriptsize \#}}
\newcommand\rest[1]{
\hskip2pt
{\vrule height8pt width.5pt depth5pt}
\hskip1.5pt
{\scriptsize \raisebox{-4.5pt}{\ensuremath{#1}}}
}
\newcommand{\bks}{\,\backslash\,}%{\sim}
\newcommand{\be}{\begin{equation}}
\newcommand{\ee}{\end{equation}}
\DeclareMathOperator{\SO}{SO}
\DeclareMathOperator{\sign}{sign}
\DeclareMathOperator{\supp}{supp}
\DeclareMathOperator{\Int}{Int}
\newcommand\D{\mathcal{D}}
\newcommand\lt{\left}
\newcommand\rt{\right}
\def\void{\varnothing}
\def\llb{\llbracket}
\def\rrb{\rrbracket}
\DeclareMathOperator{\diam}{diam}
\DeclareMathOperator{\inter}{Int}
\DeclareMathOperator{\vect}{span}
\def\s{\sigma}
\def\a{\alpha}
\def\x{\xi}
\def\bs{\boldsymbol}
\def\mc{\mathcal}
\def\Q{\mathcal{Q}}
\def\FF{\mathscr{F}}
\def\PP{\mathscr{P}}
\def\RR{\mathscr{R}}
\def\MM{\mathbb{M}}
\def\WW{\mathbb{W}}
\def\I{\mathbb{I}}
\newcommand{\Vhi}{\bs{\Phi}}
\def\H{\mathcal{H}}
\def\de{\partial}
\def\pl{{/\!\!/}}
\title{Strong approximation in $h$-mass of rectifiable currents under homological constraint}
\author{A. Chambolle\footnote{ CNRS, CMAP, \'Ecole Polytechnique CNRS UMR 7641, Route de Saclay, F-91128 Palaiseau Cedex France, email: antonin.chambolle@cmap.polytechnique.fr}   \and L. Ferrari\footnote{CMAP, \'Ecole Polytechnique, CNRS UMR 7641, Route de Saclay, F-91128 Palaiseau Cedex France, email: luca.ferrari@polytechnique.fr}\and B. Merlet\footnote{Laboratoire P. Painlev\'e, CNRS UMR 8524, Universit\'e de Lille, F-59655 Villeneuve d'Ascq Cedex, France and Team RAPSODI, Inria Lille - Nord Europe, 40 av. Halley, F-59650 Villeneuve d'Ascq, France, email: benoit.merlet@univ-lille.fr}}
\date{}
\begin{document}
\maketitle
\begin{abstract}
Let $h:\R\to\R_+$ be a lower semi-continuous subbadditive and even function such that $h(0)=0$ and $h(\theta)\geq \alpha|\theta|$ for some $\alpha>0$. The $h$-mass of a $k$-polyhedral chain $P=\sum_j\theta_j\llb\sigma_j\rrb$ in $\R^n$ ($0\leq k\leq n$) is defined as $\MM_h(P):=\sum_j h(\theta_j)\,\H^k(\sigma_j)$. If $T=\tau(M,\theta,\xi)$ is a $k$-rectifiable chain, the definition extends to $\MM_h(T):=\int_M h(\theta) \, d\H^k$. Given such a rectifiable flat chain $T$ with $\MM_h(T)<\oo$ and $\pt T$ polyhedral,  we prove that for every $\eta>0$, it decomposes as $T=P+\pt V$ with $P$ polyhedral, $V$ rectifiable, $\MM_h(V)<\eta$ and $\MM_h(P)< \MM_h(T)+\eta$. In short, we have a polyhedral chain $P$  which strongly approximates $T$ in $h$-mass and preserves the homological constraint  $\pt P=\pt T$.\\
%%%Moreover, if $\beta:=h'(0^+)$ exists and is finite, the $h$-mass naturally extends to every finite mass $k$-chains $T$ as $\widehat{\MM}(T):= \MM_h(R)+\beta \MM(T')$ where $T$ decomposes  in its rectifiable and diffuse parts $R$ and $T'$.\\
%%%In this situation, we prove that if $\pt T$ is polyhedral and $\MM(T)<\oo$, for every $\eta>0$, we have the decomposition $T=P+\pt V$ with $P$ polyhedral, $\MM(V)<\eta$ and $\MM_h(P)< \widehat{\MM}_h(T)+\eta$.\\
These results are motivated by the study of approximations of $\MM_h$ by smoother functionals but they also provide explicit formulas for the lower semicontinuous envelope of $T\mapsto \MM_h(T)+\I_{\pt S}(\pt T)$ with respect to the topology of the flat norm. 
\end{abstract}
%\tableofcontents
\section{Introduction}\label{Section: Introduction}
Let $n\geq 0$ be an integer. For $k\in\{1,\cdots,n\}$ we note $\RR_k(\R^n)$ the space of rectifiable currents $T$ with dimension $k$ in the ambient space $\R^n$ and with finite mass $\MM(R)<\oo$. Every $T=\tau(M,\xi,\theta)\in \RR_k(\R^n)$ writes as 
\[
\lt<T,\om\rt>=\int_M \theta(x) \lt<\xi(x),\om(x)\rt>\,d\H^k(x)\qquad \mbox{for any smooth, compactly supported $k$-form $\om$}.
\]  
Here, $M\subset\R^n$ is a countably $\H^k$-rectifiable set oriented by $\xi:M\to\Lambda_k(\R^n)$ where $\xi(x)$ is $\H^k$-almost everywhere a simple unit $k$-vector and $\theta :M\to \R$ is a Borel measurable multiplicity function.\\
We fix a measurable even function $h:\R\to\R_+$ with $h(0)=0$ and we define the $h$-mass of $T=\tau(M,\xi,\theta)\in \RR_k(\R^n)$ as
%\be \label{defphi}
\[ \MM_h(T):=\int_{M} h(\theta) d\H^k.\]
%\ee
Given a $k$-current $S\in\RR_k(\R^n)$, the following optimization problem can be considered.
\be
\label{OptPb}
\inf\lt\{\MM_h(T) :R\in \RR_k(\R^n), \, \pt R=\pt S \rt\}.
\ee 
Such problem appears in the context of branched transportation with $k=1$, see~\cite{Xia1,Xia2,PS2006,BCS2009}. 
 An important family of examples is provided by the choice $h(\theta)=|\theta|^\alpha$, $0\leq\alpha\leq1$. For $\a=1$ (that is $h(\theta)=|\theta|$) we have $\MM_h(T)=\MM(T)$ and we recover the mass minimizing Plateau problem whereas for $\a=0$ (that is $h(\theta)=1$ if $\theta\neq0$ and $h(0)=0$) we obtain the size minimizing Plateau problem. \medskip

Let us first discuss the question of the existence of a minimizer for~\eqref{OptPb}. We assume that the support of $S$ is compact, that is $\supp S\subset \ov{B_\lambda}$ for some $\lambda>0$ so that using the orthogonal projection onto $\ov{B_\lambda}$, we can restrict the set of candidates for problem~\eqref{OptPb} to rectifiable currents with $\supp R\subset \ov{B_\lambda}$. In order to obtain the existence of a minimizer for~\eqref{OptPb} by the direct method of the Calculus of Variations, we are looking for two properties: 
\begin{itemize}
\item[(i)]  the sequential compactness of the set 
\[
\Lambda_C := \{T\in \RR_k(\R^n):\MM_h(T)\leq C,\ \supp T\subset \ov{B_\lambda}\},
\]
for $C\geq 0$;
\item[(ii)]
the lower semicontinuity of the functional $\MM_h$.  
\end{itemize}
A natural topology for these properties to hold is the one introduced by Whitney~\cite{Whitney57}. Namely the flat norm of a $k$-current $T$ is defined as
\[
\WW(T):=\inf \,\{ \MM(U) +\MM(V)\},
\]
where the infimum runs over all possible decompositions $T=U+\pt V$. In the sequel any convergence is considered in the latter norm. Furthermore we denote with
\[
\mbox{$\FF_k(\R^n)$ the space of $k$-flat chains in $\R^n$,}
\]
that is the closure of $\RR_k(\R^n)$ in the flat norm topology. It is not difficult to see that for $\MM_h$ being well defined and lower semicontinuous with respect to flat convergence, we need: 
\be
\label{condf}
%\begin{array}{c}
\mbox{$h(0)=0$, \quad $h$ even, lower semicontinuous and subadditive.}\smallskip\\
%f(\theta_1+\theta_2)\leq f(\theta_1)+f(\theta_2)\mbox{ for }\theta_1,\theta_2\in\R.
%\end{array} 
\ee
Here, we also require the $h$-mass to control the usual mass of currents.
\be
\label{condf2}
\mbox{There exists $\alpha>0$ such that $h(\theta)\geq \alpha |\theta|$ for $\theta\in \R$}.
\ee
In the recent paper~\cite{CRMS2017}, it is established that under conditions in~\eqref{condf}, $\MM_h$ is lower semi-continuous on $\RR_k(\R^n)$. The result is more precise. Let us recall that a $k$-polyhedral current is a $k$-rectifiable current which writes as a finite sum 
\[
P=\sum_j\theta_j\llb\sigma_j\rrb.
\] 
The $\theta_j\in \R$ are multiplicities, the $\sigma_j$ are oriented $k$-polyhedrons and for every $j$,  $\llb\sigma_j\rrb$ denotes the integration of smooth $k$-differential forms over $\sigma_j$. We note $\PP_k(\R^n)\subset \RR_k(\R^n)$ the space of $k$-polyhedral currents. In~\cite{CRMS2017}, the authors introduce the lower semicontinuous envelope of $\MM_h$ restricted to $\PP_k(\R^n)$ with respect to the flat convergence:
%\be\label{Vhih}
\[\Vhi_h(T):=\inf \lt\{\liminf_{j\up\oo}\ \MM_h(\PP_j) : (P_j)\subset \PP_k(\R^n),\, P_j \to T\rt\}.\]
%\ee
They prove that under assumption~\eqref{condf}, their holds  $\Vhi_h=\MM_h$ on $\RR_k(\R^n)$. This result is also stated in~\cite[Sec 6.]{White1} in the context of $G$-valued flat chains with a sketch of proof. Assuming moreover~\eqref{condf2} and
\be\label{condf3}
 \mbox{ $h$ is non-decreasing on $(0,+\oo)$ with $\lim_{\theta\dw 0} h(\theta)/\theta=+\oo$,}
 \ee 
it is established that $\Vhi_h\equiv +\oo$ on $\FF_k(\R^n)\setminus \RR_k(\R^n)$ (see~\cite[Prop. 2.7]{CRMS2017}). This proves the compactness of the sets $\Lambda_C$. 

\begin{remark}
Under~\eqref{condf} the condition~\eqref{condf2} is equivalent to~$\lim_{\theta\up \oo} h(\theta)/\theta>0$. If this condition fails then the compactness of a minimizing sequence for problem~\eqref{OptPb} is not clear. In fact, in general minimizers do not exist in the set of rectifiable currents (see~\cite[example of Sec. 1]{dPH2003}).  Nevertheless, in the special case $k=1$ and $\MM(\pt S)<\oo$, we can assume that~\eqref{condf2} holds true.\footnote{ Indeed, using Smirnov decomposition~\cite{Smirnov1994}, any candidate $R=\tau(M,\xi,\theta)$ for problem~\eqref{OptPb} decomposes as $R=R'+R_0$ with $\pt R_0=0$, $\MM(R)=\MM(R')+\MM(R_0)$ and $\MM(R')$ minimal.  We then have $R'=\tau(M,\xi,\theta')$ with  $|\theta'|\leq \MM(\pt S)/2=:q$ and 
%also $|\theta'|\leq |\theta|$ $\H^k$-almost everywhere on $M$ so that 
$\MM_h(R')\leq\MM_h(R)$. As a consequence, we can restrict the set of candidates for problem~\eqref{OptPb} to rectifiable currents $R=\tau(M,\xi,\theta)$ such that $|\theta|\leq q$. Modifying $h$ in $\R\setminus [-q,q]$ we can assume\eqref{condf2}.}\medskip
\end{remark}

Notice that  the homological constraint $\pt T=\pt S$ does not appear in the definition of $\Vhi_h$. In this note, we consider the lower semicontinuous envelope of $\MM_h$ restricted to the set of polyhedral currents satisfying $\pt P=\pt S$. Let us assume $\pt S$ to be a polyhedral current and let us note $\I^S$ the (convex analysis') indicatrix function of the set $\{T\in \FF_k(\R^n ): \pt T=\pt S\}$, that is 
\[
\I^S(T)=\begin{cases}\quad 0&\mbox{if $T\in \FF_k(\R^n)$ with $\pt T=\pt S$,}\\
+\oo&\mbox{in the other cases.}
\end{cases}
\]
We note
%\be\label{VhihS}
\[\Vhi_h^S(T):=\inf \lt\{\liminf_{j\up\oo}\ \MM_h(P_j) +\I^S(P_j): (P_j)\subset \PP_k(\R^n),\, P_j \to T\rt\}.\]
%\ee 
We obviously have $\Vhi_h^S\geq \Vhi_h$ and by continuity of the boundary operator under flat convergence, we also have $\Vhi_h\geq \I^S$. Hence, 
\be
\label{geqineq}
\Vhi_h^S \geq \Vhi_h +\I^S.
\ee
The opposite inequality follows from the following strong polyhedral approximation result whose proof is our main purpose.
\begin{theorem}\label{thm1}
Let us assume that $h:\R\to\R_+$ satisfies~\eqref{condf}\eqref{condf2}. Let $R\in \RR_k(\R^n)$ with $\MM_h(R)<\oo$  and $\pt R\in \PP_{k-1}(\R^n)$, then  
for every $\eta>0$, we have the decomposition $R=P+\pt V$ for some  $P\in \PP_k(\R^n),\ V\in \RR_{k+1}(\R^n)$ satisfying the estimates
\[%\be
\MM_h(P)\ <\ \MM_h(R)+\eta\quad\mbox{and}\quad \MM_h(V)\ <\ \eta.
\]%\ee
Moreover, $\supp V\subset \supp R+B_\eta$.
\end{theorem}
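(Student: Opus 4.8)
The equality $\Vhi_h=\MM_h$ of \cite{CRMS2017} already furnishes polyhedral chains converging to $R$ in flat norm with $\MM_h$ converging to $\MM_h(R)$, but it does not respect the homological constraint, and the defect $\pt P-\pt R$ it produces is controlled only in flat norm — which is not enough: a polyhedral $(k-1)$-cycle of arbitrarily small flat norm need not bound any polyhedral chain of small $\MM_h$ (think of many tiny loops carrying a very small multiplicity $\theta$, whose flat norm is minute while the cheapest filling costs $\sim(h(\theta)/\theta)\times(\text{flat norm})$, unbounded when $h(\theta)/\theta\to\oo$). So one cannot use \cite{CRMS2017} as a black box and then repair the boundary. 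Instead, the plan is to revisit the deformation argument underlying that approximation and run it with the polyhedral boundary $\pt R$ \emph{frozen}, so that no defect is ever created, reading off the rectifiable filling $V$ from the homotopy of the deformation, whose $h$-mass is automatically small because the homotopy is small.

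\textbf{Construction.} Fix small parameters $\rho,\eps>0$. Since $\MM_h(R)=\int_M h(\theta)\,d\H^k<\oo$, the measure $h(\theta)\,d\H^k$ on $M$ is finite; using this together with the rectifiability of $M$, choose a Borel ``good set'' $M_g\subset M$ with $\int_{M\setminus M_g}h(\theta)\,d\H^k<\eps$ and such that at scale $\rho$ the set $M_g$ is $\eps$-close (in Lipschitz sense) to a locally finite family of $k$-planes along which $\theta$ is nearly constant. Build a simplicial complex $\mathcal T_\rho$ covering $\supp R+B_\rho$ with mesh $\leq\rho$ such that: (a) $\supp\pt R$ lies in the $(k-1)$-skeleton of $\mathcal T_\rho$; (b) outside a small neighbourhood of $\supp\pt R$, the $k$-faces of $\mathcal T_\rho$ meeting $M_g$ are nearly parallel to the corresponding approximating planes and are crossed by $M$ transversally and simply (generic position); (c) $\mathcal T_\rho$ has bounded geometry. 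Push $R$ onto the $k$-skeleton of $\mathcal T_\rho$ by the iterated radial projection of the classical deformation theorem, obtaining a polyhedral $P\in\PP_k(\R^n)$. Since the radial projections fix the $(k-1)$-skeleton, the homotopy is stationary on $\supp\pt R$; hence $\pt P=\pt R$ exactly and the homotopy formula reduces to $R-P=\pt V$ with $V\in\RR_{k+1}(\R^n)$ the homotopy chain, $\supp V\subset\supp R+B_{c(n)\rho}$.

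\textbf{Estimates.} The chain $V$ carries the multiplicities of $R$ swept over a motion of size $\lesssim\rho$, so a homotopy/cone-type estimate (using subadditivity of $h$ to control folding) gives $\MM_h(V)\leq c(n)\,\rho\,\MM_h(R)$. For $P$, split $M$ into the aligned good part (that part of $M_g$ away from $\supp\pt R$) and the rest. On the aligned part the radial projection is, cell by cell, a $(1+o_\rho(1))$-bi-Lipschitz homeomorphism onto a nearby $k$-face with no folding of multiplicities and with $\theta$ nearly constant, so the $h$-mass of the corresponding part of $P$ is $\leq(1+o_\rho(1))$ times that of $R$ there. The rest of $P$ comes from $R$ on $(M\setminus M_g)\cup B_\sigma(\supp\pt R)$, on which $R$ has $h$-mass $<2\eps$ by the choice of $M_g$ together with $\int_{M\cap B_\sigma(\supp\pt R)}h(\theta)\,d\H^k\to0$ as $\sigma\to0$ (since $\H^k(\supp\pt R)=0$ and $h(\theta)\in L^1(M)$); the crude $c(n)$-Lipschitz bound of the deformation theorem then contributes $\leq c(n)\,\eps$. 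Summing, $\MM_h(P)\leq(1+o_\rho(1))\MM_h(R)+c(n)\eps$, which is $<\MM_h(R)+\eta$ once $\rho,\eps$ are small; the same choice with $c(n)\rho<\eta$ yields $\MM_h(V)<\eta$ and $\supp V\subset\supp R+B_\eta$.

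\textbf{Main obstacle.} The heart of the proof is the construction of $\mathcal T_\rho$ meeting all of (a)--(c) simultaneously — freezing the \emph{fixed} polyhedral set $\supp\pt R$ inside the $(k-1)$-skeleton while still tilting the cells to follow $M$ everywhere else, within a genuine simplicial complex of small mesh — together with the verification that the non-aligned layer near $\supp\pt R$ (and the transition region around it) contributes only $o(1)$ to the excess $h$-mass; this last point is precisely where one uses that $\supp\pt R$ is $\H^k$-null and that $h(\theta)\in L^1(M)$. Once the complex is in place, the bound on $\MM_h(V)$ and the support condition are routine bookkeeping.
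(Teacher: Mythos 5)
Your broad plan --- adapt a polyhedral skeleton to $M$ so that most of the $h$-mass sits on nearly tangent $k$-faces, freeze $\supp\pt R$ in the $(k-1)$-skeleton, push $R$ onto the $k$-skeleton by iterated radial projections and read $V$ off the homotopy --- is the same in spirit as the paper's. But the two $h$-mass estimates on which the whole argument hinges are asserted rather than proved, and they are exactly where the classical deformation argument does not transfer.

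You claim that the ``bad'' contribution to $P$ is $\leq c(n)\eps$ and that $\MM_h(V)\leq c(n)\rho\,\MM_h(R)$ by ``the crude $c(n)$-Lipschitz bound of the deformation theorem.'' The radial projection $u$ from a point $a$ inside a cell $Q$ is not uniformly Lipschitz --- $\|Du(y)\|\sim 1/|y-a|$ --- and the only general pushforward estimate for $\MM_h$ is $\MM_h(u\pf T)\leq\|Du\|_{L^\oo(M)}^k\,\MM_h(T)$ (cf.~\eqref{estimPFLip}). The Federer--Fleming $c(n)$ bound is therefore not automatic: classically it comes from averaging over \emph{translates of the grid}. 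That device is unavailable to you here, because your mesh $\mathcal T_\rho$ has been carefully adapted to $M_g$; translating it destroys the alignment that gives the $(1+o_\rho(1))$ factor on the good part. The paper's resolution (Lemma~\ref{lem1}, Step~1) is to keep the grid fixed and instead average over the projection center $a\in\frac12 Q$: one shows $\int_{\frac12 Q}I_h(a)\,d\H^j(a)\lesssim\MM_h(T\restr Q)$ with $I_h(a)=\int_{Q\cap M}|y-a|^{-k}h(\theta)\,d\H^k$, and Markov's inequality then produces a good center. Without this (or an equivalent device) neither your bound on the bad part of $P$ nor your bound on $\MM_h(V)$ is justified; this is the technical core of the result.

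Separately, the ``main obstacle'' you flag --- building a genuine simplicial complex of small mesh with $\supp\pt R$ frozen in its $(k-1)$-skeleton while the $k$-faces are tilted to follow $M$ everywhere else --- is a serious construction in its own right (compare~\cite{Feuvrier2012}), and the paper deliberately avoids it. Lemma~\ref{lemS42} first pushes almost all of the $h$-mass of $R$ onto a \emph{finite disjoint} family of $k$-cubes via a Morse--Besicovitch cover of tangent cubes and a graph-flattening diffeomorphism; one then only needs a cubical grid inside each box $F_l^{\delta_l}$, with no compatibility across boxes to arrange, plus a second independent fine grid in a thin neighbourhood of the leftover set $Z$, and the constancy theorem (Lemma~\ref{lemConstancy}) concludes polyhedrality. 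If you wish to keep a single global adaptive mesh, you must both construct that mesh and supply the Markov-type argument for the projection centers; as written, both are missing.
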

If we drop assumption~\eqref{condf2} but assume that $\MM(R)<\oo$, the result still holds true. Indeed, applying Theorem~\ref{thm1} to $\tilde h(\theta):=|\theta|+ h(\theta)$ (that is $\MM_{\tilde h}=\MM+\MM_h$) and using the lower semi-continuity of $\MM$ under flat convergence, we obtain:
\begin{corollary}\label{coro_thm1}
Assume that $h:\R\to\R_+$ satisfies~\eqref{condf} and let $R\in \RR_k(\R^n)$ with $\MM_h(R)+\MM(R)<\oo$  and $\pt R\in \PP_{k-1}(\R^n)$, then  
for every $\eta>0$, we have the decomposition $R=P+\pt V$ for some  $P\in \PP_k(\R^n),\ V\in \RR_{k+1}(\R^n)$ satisfying the estimates
\[%\be
\MM_h(P)\ <\ \MM_h(R)+\eta,\qquad \MM(P)\ <\ \MM(R)+\eta\quad\mbox{and}\quad \MM(V)+\MM_h(V)\ <\ \eta.
\]%\ee
Moreover, $\supp P \cup \supp V\subset \supp R+B_\eta$.
\end{corollary}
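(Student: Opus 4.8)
The plan is to obtain Corollary~\ref{coro_thm1} as a direct consequence of Theorem~\ref{thm1}, applied not to $h$ but to the auxiliary integrand $\tilde h:=|\cdot|+h$, and then to disentangle the two mass bounds by a lower semicontinuity argument. First I would check that $\tilde h(\theta):=|\theta|+h(\theta)$ satisfies both~\eqref{condf} and~\eqref{condf2}: it is even, lower semicontinuous and vanishes at $0$; it is subadditive, being a sum of the subadditive functions $|\cdot|$ and $h$; and $\tilde h(\theta)\ge|\theta|$ since $h\ge0$, which is~\eqref{condf2} with $\alpha=1$. Since $\MM_{\tilde h}=\MM+\MM_h$, the hypotheses $\MM(R)+\MM_h(R)<\oo$ and $\pt R\in\PP_{k-1}(\R^n)$ say exactly that $R$ is an admissible input for Theorem~\ref{thm1} with integrand $\tilde h$.

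Next I would apply Theorem~\ref{thm1} to $\tilde h$ along a sequence of tolerances $\eta_j\dw0$ with $\eta_j\le\eta$, obtaining decompositions $R=P_j+\pt V_j$ with $P_j\in\PP_k(\R^n)$, $V_j\in\RR_{k+1}(\R^n)$, $\MM_{\tilde h}(P_j)<\MM_{\tilde h}(R)+\eta_j$, $\MM_{\tilde h}(V_j)<\eta_j$ and $\supp V_j\subset\supp R+B_{\eta_j}$. Reading the identity $\MM_{\tilde h}=\MM+\MM_h$ on $V_j$ already gives the bound $\MM(V_j)+\MM_h(V_j)<\eta_j\le\eta$ required by the Corollary. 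Moreover $R-P_j=\pt V_j$, so $\WW(R-P_j)\le\MM(V_j)\le\MM_{\tilde h}(V_j)<\eta_j\to0$, i.e. $P_j\to R$ in the flat norm, while $\supp P_j\subset\supp R\cup\supp V_j\subset\supp R+B_{\eta_j}\subset\supp R+B_\eta$.

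The one point that requires care is that a single application of Theorem~\ref{thm1} to $\tilde h$ controls only the \emph{sum} $\MM(P_j)+\MM_h(P_j)$, not each term individually. Here I would invoke that $\MM$ is lower semicontinuous along flat-convergent sequences and that, under~\eqref{condf}, so is $\MM_h$ on $\RR_k(\R^n)$ (the result of~\cite{CRMS2017} recalled above); hence $\liminf_j\MM(P_j)\ge\MM(R)$ and $\liminf_j\MM_h(P_j)\ge\MM_h(R)$. Combining these with $\limsup_j\big(\MM(P_j)+\MM_h(P_j)\big)\le\MM(R)+\MM_h(R)$ yields $\limsup_j\MM_h(P_j)\le\MM_h(R)$ and $\limsup_j\MM(P_j)\le\MM(R)$; in particular $\MM(P_j)\to\MM(R)$ and $\MM_h(P_j)\to\MM_h(R)$. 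Therefore, for $j$ large enough, $\MM(P_j)<\MM(R)+\eta$ and $\MM_h(P_j)<\MM_h(R)+\eta$ hold simultaneously.

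Setting $P:=P_j$ and $V:=V_j$ for such a $j$ then gives all the assertions of Corollary~\ref{coro_thm1}. I do not expect any serious obstacle in this reduction: Theorem~\ref{thm1} carries all the analytic weight, and what remains is the bookkeeping above --- the only genuinely necessary extra ingredient being the lower semicontinuity of $\MM$ (together with that of $\MM_h$), used to pin down the limits of the two mass terms separately along the sequence $P_j$.
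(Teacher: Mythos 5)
Your proof is correct and follows the same route the paper sketches in the sentence preceding the corollary: apply Theorem~\ref{thm1} to $\tilde h=|\cdot|+h$, which satisfies~\eqref{condf} and~\eqref{condf2} with $\alpha=1$, and then use lower semicontinuity along a sequence $\eta_j\dw0$ to split the combined bound on $\MM+\MM_h$ into the two separate estimates. One small but worthwhile observation you make explicit, and which the paper's one-line justification glosses over, is that lower semicontinuity of \emph{both} $\MM$ and $\MM_h$ is needed to disentangle the two terms: the paper mentions only $\MM$, but the l.s.c.\ of $\MM_h$ on $\RR_k(\R^n)$ under~\eqref{condf} (the result of~\cite{CRMS2017} recalled in the introduction) is equally essential in your $\liminf/\limsup$ bookkeeping, and you correctly invoke it.
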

\noindent
Taking into account~\eqref{geqineq} and the results of~\cite{CRMS2017}, we obtain an explicit form for $\Vhi_h^S$. 
\begin{corollary}
\label{coroPhiS}
Under condition~\eqref{condf}, we have $\Vhi_h^S=\Vhi_h+\I_h^S$. In particular, $\Vhi_h^S=\MM_h+\I^S$ on $\RR_k(\R^n)$. Moreover, under conditions~\eqref{condf}, \eqref{condf2} and~\eqref{condf3}, for $T\in \FF_k(\R^n)$
\[
\Vhi_h^S(T) =
\begin{cases} \MM_h(T) &\mbox{if $T\in \RR_k(\R^n)$ with $\pt T=\pt S$,}\\
\ +\oo&\mbox{in the other cases.}
\end{cases}
\]
\end{corollary}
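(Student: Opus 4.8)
By~\eqref{geqineq} we already have $\Vhi_h^S\geq\Vhi_h+\I^S$, so the content of the statement is the reverse inequality $\Vhi_h^S\leq\Vhi_h+\I^S$. Since $\pt$ is continuous for flat convergence, a sequence $(P_j)\subset\PP_k(\R^n)$ with $P_j\to T$ and $\liminf_j\I^S(P_j)<\oo$ must have infinitely many terms with $\pt P_j=\pt S$, forcing $\pt T=\pt S$; hence $\Vhi_h^S(T)=+\oo$ whenever $\pt T\neq\pt S$, so the reverse inequality is trivial there, and also when $\Vhi_h(T)=+\oo$. Thus it suffices to prove: \emph{if $T\in\FF_k(\R^n)$ satisfies $\pt T=\pt S$ and $\Vhi_h(T)<\oo$, then $\Vhi_h^S(T)\leq\Vhi_h(T)$.} I will use that $\MM_h$ is subadditive on polyhedral chains, $\MM_h(P+P')\leq\MM_h(P)+\MM_h(P')$, which follows from the subadditivity of $h$ after passing to a common subdivision of $P$ and $P'$.

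\emph{Rectifiable case.} Assume first $T=R\in\RR_k(\R^n)$. Then $\MM(R)<\oo$ by definition of $\RR_k(\R^n)$, $\pt R=\pt S\in\PP_{k-1}(\R^n)$, and $\Vhi_h(R)=\MM_h(R)$ by~\cite{CRMS2017}, which we may assume finite. Applying Corollary~\ref{coro_thm1} with $\eta=1/j$ gives $R=P_j+\pt V_j$ with $P_j\in\PP_k(\R^n)$, $V_j\in\RR_{k+1}(\R^n)$, $\MM_h(P_j)<\MM_h(R)+1/j$ and $\MM(V_j)<1/j$. Then $\pt P_j=\pt R=\pt S$, so $\I^S(P_j)=0$, while $\WW(R-P_j)=\WW(\pt V_j)\leq\MM(V_j)\to0$, i.e. $P_j\to R$. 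Hence $\Vhi_h^S(R)\leq\liminf_j\bigl(\MM_h(P_j)+\I^S(P_j)\bigr)\leq\MM_h(R)=\Vhi_h(R)$. Together with~\eqref{geqineq} and $\Vhi_h=\MM_h$ on $\RR_k(\R^n)$ this proves $\Vhi_h^S=\Vhi_h+\I^S=\MM_h+\I^S$ on $\RR_k(\R^n)$, which is the second assertion.

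\emph{General case.} Let $T\in\FF_k(\R^n)$ with $\pt T=\pt S$ and $\Vhi_h(T)<\oo$, and choose $(Q_j)\subset\PP_k(\R^n)$ with $Q_j\to T$ and $\MM_h(Q_j)\to\Vhi_h(T)$. Write $Q_j-T=A_j+\pt B_j$ with $\MM(A_j)+\MM(B_j)\to0$; then the polyhedral $(k-1)$-chain $Z_j:=\pt Q_j-\pt S$ satisfies $Z_j=\pt Q_j-\pt T=\pt A_j$, so it is a cycle bounding the flat $k$-chain $A_j$, whose mass tends to $0$, and in particular $\WW(Z_j)\to0$. Suppose $Z_j$ can be filled by a polyhedral $k$-chain $W_j$ with $\pt W_j=Z_j$, $\WW(W_j)\to0$ and $\MM_h(W_j)\to0$. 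Set $P_j:=Q_j-W_j\in\PP_k(\R^n)$: then $\pt P_j=\pt Q_j-Z_j=\pt S$ (so $\I^S(P_j)=0$), $\WW(P_j-T)\leq\WW(Q_j-T)+\WW(W_j)\to0$, and $\MM_h(P_j)\leq\MM_h(Q_j)+\MM_h(W_j)\to\Vhi_h(T)$ by subadditivity. Hence $\Vhi_h^S(T)\leq\Vhi_h(T)$, which with~\eqref{geqineq} gives $\Vhi_h^S=\Vhi_h+\I^S$. The displayed formula follows: under~\eqref{condf2}--\eqref{condf3} one has $\Vhi_h\equiv+\oo$ on $\FF_k(\R^n)\setminus\RR_k(\R^n)$ by~\cite[Prop.~2.7]{CRMS2017}, so $\Vhi_h+\I^S$ equals $\MM_h(T)$ on $\{T\in\RR_k(\R^n):\pt T=\pt S\}$ and $+\oo$ elsewhere.

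\emph{Main obstacle.} The delicate point is the filling of $Z_j$. It is elementary when $\limsup_{\theta\to0}h(\theta)/|\theta|<\oo$: subadditivity then forces $h(\theta)\leq C|\theta|$ for all $\theta$, hence $\MM_h\leq C\,\MM$, so it is enough to find $W_j$ polyhedral with $\pt W_j=Z_j$ and $\MM(W_j)\to0$; such a $W_j$ is produced by a Federer--Fleming deformation of $A_j$ onto the $k$-skeleton of a simplicial subdivision of a large ball containing the relevant supports, chosen to have the faces of $Z_j$ among its cells and with mesh small enough, so that $\pt W_j=Z_j$ exactly and $\MM(W_j)\leq C\,\MM(A_j)+C\,(\mathrm{mesh})\,\MM(Z_j)\to0$. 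For genuinely sublinear $h$, ensuring $\MM_h(W_j)\to0$ requires a sharper control of the multiplicities created by the deformation; I expect this to be the technical heart of the general identity under~\eqref{condf} alone, although it is not needed for the final displayed formula, where \cite[Prop.~2.7]{CRMS2017} already reduces matters to the rectifiable case.
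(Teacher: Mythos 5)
Your treatment of the second assertion and of the displayed formula is correct and follows exactly the route the paper intends: \eqref{geqineq} for one inequality, Corollary~\ref{coro_thm1} with $\eta=1/j$ to produce a constrained recovery sequence on $\RR_k(\R^n)$ (noting that $\pt P_j=\pt R=\pt S$ and $\WW(R-P_j)\leq \MM(V_j)\to 0$), the identity $\Vhi_h=\MM_h$ on $\RR_k(\R^n)$ from~\cite{CRMS2017}, and \cite[Prop.~2.7]{CRMS2017} to dispose of everything outside $\RR_k(\R^n)$ once \eqref{condf2} and \eqref{condf3} are assumed. The paper gives no proof of the corollary beyond citing these same ingredients, so on these two assertions your proposal matches the paper's argument.

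The obstacle you isolate for the first, unconditional identity $\Vhi_h^S=\Vhi_h+\I^S$ on all of $\FF_k(\R^n)$ is genuine: for a non-rectifiable $T$ with $\pt T=\pt S$ and $\Vhi_h(T)<\oo$, Corollary~\ref{coro_thm1} does not apply, and one must indeed fill the polyhedral cycles $Z_j=\pt Q_j-\pt S$ at vanishing $h$-mass, which a Federer--Fleming filling does not deliver when $h$ is not linearly bounded. But you should be aware that this is a gap in the paper's own one-line derivation as much as in your proposal: the only tool the paper supplies for non-rectifiable chains under \eqref{condf} alone is Theorem~\ref{thm2} in the regime \eqref{beta} (precisely the regime you call ``elementary''), and even there its estimate is phrased in terms of $\widehat{\MM}_h(T)$ while the identification $\Vhi_h=\widehat{\MM}_h$ is explicitly left open in the text. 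So your conditional argument in the ``general case'' (subtracting a polyhedral filling $W_j$ of $Z_j$ and using subadditivity of $\MM_h$ on polyhedral chains) is the right reduction, your observation that subadditivity upgrades $\limsup_{\theta\dw 0}h(\theta)/\theta<\oo$ to a global bound $h\leq C|\cdot|$ is correct, and your honest flag that the remaining case is not covered is accurate; it does not affect the ``in particular'' statement on $\RR_k(\R^n)$ nor the displayed formula, which are the assertions the paper actually uses and substantiates.
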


\subsection{Motivation}

When it comes to numerical simulations, it is often convenient to substitute for~\eqref{OptPb} a family of approximate variational problems with better differentiation properties: for $\eps\in(0,1]$,
\be
\label{OptPbeps}
\inf\lt\{\MM_h^\eps(T_\eps) :T_\eps\in \D_k(\R^n), \, \pt T_\eps=\pt S_\eps\rt\}. 
\ee
Here the boundary condition is provided by a family of currents $\{S_\eps\}$ which are given mollifications of $S$ and such that $S_\eps\to S$ as $\eps\dw 0$.
This strategy is  implemented in, e.g.~\cite{Santambroggio,OS2011,Monteil2017,Bon_Lem_San,CFM2016,CFM2017}. The asymptotic equivalence between the approximate variational problem and~\eqref{OptPb} follows from the (expected) $\Gamma$-convergence of the family $\{\MM_h^\eps\}$ towards $\MM_h$ as $\eps\dw 0$. In particular the upper bound part of the $\Gamma$-convergence asserts that for any $T=\tau(M,\xi,\theta)\in \RR_k(\R^n)$ with $\pt T=\pt S$, there exists a family $\{T_\eps\}$ with $\pt T_\eps=\pt S_\eps$ such that  $T_\eps\to T$ and 
\[
\MM_h(T)\geq \limsup_{\eps\dw0} \MM_h^\eps(T_\eps).
\] 
Usually, the construction of such a recovery family $\{T_\eps\}$ is easier when $M$ is a smooth manifold and $\theta$ is smooth. In fact, the family of functionals $\{\MM_h^\eps\}$ is designed for this. A method for building $\{T_\eps\}$ in the general case consists in reducing to this special case: we first approximate $T$ with a smooth or piecewise smooth rectifiable current: here, a polyhedral current. More precisely, the polyhedral current $P$ should be close to $T$  in flat distance with $\MM_h(P)\leq \MM_h(T)+o(1)$. These conditions are not sufficient. Indeed, having in mind the constraint $\pt T_\eps=\pt S_\eps$, we also need a constraint on $\pt P$. If $S$ is a polyhedral current, we can impose $\pt P=\pt S$. In this case, the approximation theorem~\ref{thm1} fits our needs. The above result extends to the case of $\pt S$ being a piecewise $C^1$-cyclic $(k-1)$-manifold if we allow $P$ to be a piecewise $C^1$-current, but this is far from enough. For usual branched transportation problems, the constraint $\pt S$ may be supported on a set with dimension larger than $(k-1)$. A natural requirement is then to assume that $\pt S$ can be deformed into a polyhedral current with small energy expense. We assume:
\be
\label{condbdry}
\exists \{\Sigma_\eps\}\subset \PP_k(\R_n),\ \exists \{Z_\eps\}\subset \RR_k(\R_n)\mbox{ with }  \pt S + \pt Z_\eps =\pt \Sigma_\eps \mbox{ and }\MM_h(Z_\eps) =o (1).
\ee
It is then convenient to define the approximate constraint $S_\eps$ in~\eqref{OptPbeps} as a mollification of $\Sigma_\eps$. Applying Theorem~\ref{thm1} to $T+Z_\eps$, we get the following.
\begin{corollary}
\label{coro1}
Let $h$ satisfying~\eqref{condf}\eqref{condf2}, let $S\in \RR_k(\R^n)$ and assume~\eqref{condbdry}. Then for any $k$-current with $\pt T=\pt S$ and $\MM_h(T)<\oo$, there exist 
$\{P_\eps\}\subset \RR_k(\R^n)$, $\{V_\eps\}\subset \RR_{k+1}(\R^n)$ with 
\[
\pt P_\eps=\pt \Sigma_\eps,\qquad T=P_\eps+\pt V_\eps,\qquad \MM_h(V_\eps)=o(1) \qquad \MM_h(P_\eps)\leq \MM_h(T)+o(1).
\] 
\end{corollary}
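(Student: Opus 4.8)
The plan is to prove Theorem~\ref{thm1} — the approximation of a rectifiable chain $R$ with polyhedral boundary by a polyhedral chain $P=R-\pt V$ with small filling $V$ and controlled $h$-mass — and then deduce Corollary~\ref{coro1} from it by a short bookkeeping argument. For the main theorem I would proceed in several reductions. First, since $\MM_h(R)<\oo$ and $h(\theta)\ge\alpha|\theta|$, we have $\MM(R)<\oo$, so $R$ is a genuine rectifiable current in $\RR_k(\R^n)$ with compact support contained in some ball. Next, I would discretize the multiplicity: because $h$ is subadditive, lower semicontinuous and even, one can replace $\theta$ by a function taking values in a fixed countable (or, after truncation of the tail contribution, finite) set of levels $\{\theta_i\}$, at the cost of an error $\eta/4$ in $\MM_h$ and a small filling; this is the standard ``slicing the multiplicity into level sets'' step, writing $R=\sum_i \theta_i' R_i$ with $R_i$ integer-multiplicity (in fact multiplicity one) rectifiable currents whose supports may overlap. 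The point of this reduction is that $\MM_h$ on such a layered current is, up to the fixed weights $h(\theta_i)$, just a sum of ordinary masses $\MM(R_i)$, and ordinary mass is exactly what the classical polyhedral approximation / deformation theorems control.

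The heart of the argument is then the classical Federer–Fleming style polyhedral approximation, but applied \emph{simultaneously and compatibly} to all the layers $R_i$ while respecting the constraint $\pt R\in\PP_{k-1}$. Concretely, I would fix a fine cubical grid of size $\delta\ll\eta$ (suitably translated/rotated so that $\pt R$ lies in general position with respect to it, using a Sard/averaging argument), and push each $R_i$ onto the $k$-skeleton of the grid by the iterated radial projection of the deformation theorem. This produces polyhedral currents $P_i$ on the skeleton with $P_i=R_i-\pt W_i-E_i$, where $W_i$ has mass $\lesssim \delta\,\MM(R_i)$ and $E_i$ is supported near $\pt R_i$ with mass $\lesssim \MM(\pt R_i)\cdot\delta$. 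Assembling $P:=\sum_i\theta_i'P_i$, $V:=\sum_i\theta_i'W_i$, one gets $R=P+\pt V+(\text{error})$; the mass estimate of the deformation theorem gives $\MM(W_i)\le C\delta\,\MM(R_i)$, hence $\MM_h(V)\le\sum_i h(\theta_i')C\delta\MM(R_i)\le C\delta\,\MM_h(R)\cdot(\sup_i h(\theta_i')/(\alpha\theta_i'))$, which is $<\eta$ for $\delta$ small — provided one first truncates the high-multiplicity part, whose total $\H^k$-measure is tiny, since otherwise $\sup_i h(\theta_i)/\theta_i$ need not be bounded. The support condition $\supp V\subset\supp R+B_\eta$ is automatic because the deformation moves points by at most $O(\delta)<\eta$.

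The two points that require real care — and which I expect to be the main obstacles — are: (a) the \emph{$\MM_h(P)<\MM_h(R)+\eta$} bound, since the deformation theorem only gives $\MM(P_i)\le C\MM(R_i)$ with a dimensional constant $C>1$, which is useless here; one must instead use the finer statement that the projected current on each $k$-cube has mass controlled by the mass of the piece \emph{in that cube} with constant arbitrarily close to $1$ after a further subdivision, together with the lower semicontinuity and subadditivity of $h$ to recombine overlapping layers without energy loss — essentially one is re-deriving the ``$\Vhi_h=\MM_h$'' estimate of \cite{CRMS2017} but keeping the boundary fixed. (b) handling the error terms $E_i$ near $\pt R$: because $\pt R$ is already polyhedral, by choosing the grid adapted to $\pt R$ (so that $\pt R$ sits inside the skeleton from the start) one can arrange $\pt R_i$ to be unmoved by the deformation, killing $E_i$ exactly and ensuring $\pt P=\pt R$; this is where polyhedrality of $\pt R$ is essential and is the reason the hypothesis appears. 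Finally, for Corollary~\ref{coro1}, given $T$ with $\pt T=\pt S$ and the hypothesis~\eqref{condbdry} providing $\Sigma_\eps\in\PP_k$, $Z_\eps$ with $\pt S+\pt Z_\eps=\pt\Sigma_\eps$ and $\MM_h(Z_\eps)=o(1)$, one applies Theorem~\ref{thm1} to $R:=T+Z_\eps$ (whose boundary $\pt\Sigma_\eps$ is polyhedral) with $\eta=\eta_\eps\dw0$, obtaining $T+Z_\eps=P_\eps+\pt V_\eps$ with $\MM_h(P_\eps)\le\MM_h(T+Z_\eps)+\eta_\eps\le\MM_h(T)+\MM_h(Z_\eps)+\eta_\eps=\MM_h(T)+o(1)$ and $\MM_h(V_\eps)<\eta_\eps$; since $\pt P_\eps=\pt(T+Z_\eps)=\pt\Sigma_\eps$ and $T=P_\eps+\pt V_\eps-Z_\eps$, one absorbs $Z_\eps$ — writing $Z_\eps$ itself, if needed, as $P'_\eps+\pt V'_\eps$ via Theorem~\ref{thm1} again, or more simply noting $\MM_h(Z_\eps)=o(1)$ means $Z_\eps\to0$ and folding it into the $o(1)$ — to reach the claimed $T=P_\eps+\pt V_\eps$ with the stated estimates.
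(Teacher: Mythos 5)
Your derivation of Corollary~\ref{coro1} from Theorem~\ref{thm1} is precisely the paper's: one applies Theorem~\ref{thm1} with $\eta=\eta_\eps\dw0$ to $R:=T+Z_\eps$, whose boundary $\pt(T+Z_\eps)=\pt S+\pt Z_\eps=\pt\Sigma_\eps$ is polyhedral by~\eqref{condbdry}, and obtains
\[
T+Z_\eps=P_\eps+\pt V_\eps,\qquad \pt P_\eps=\pt\Sigma_\eps,\qquad \MM_h(V_\eps)<\eta_\eps,\qquad \MM_h(P_\eps)<\MM_h(T)+\MM_h(Z_\eps)+\eta_\eps=\MM_h(T)+o(1),
\]
using the subadditivity $\MM_h(T+Z_\eps)\leq\MM_h(T)+\MM_h(Z_\eps)$ and $\MM_h(Z_\eps)=o(1)$. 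Up to this point everything is correct and matches the paper exactly.

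However, you then try to upgrade $T+Z_\eps=P_\eps+\pt V_\eps$ to the stated $T=P_\eps+\pt V_\eps$, and neither of your two suggestions works. Applying Theorem~\ref{thm1} again to $Z_\eps$ is not possible, because $\pt Z_\eps=\pt\Sigma_\eps-\pt S$ is not assumed to be polyhedral ($S$ is merely in $\RR_k(\R^n)$), so the hypothesis of Theorem~\ref{thm1} fails for $Z_\eps$. And ``folding $Z_\eps$ into the $o(1)$'' is not available either: $T=P_\eps+\pt V_\eps$ is an exact identity of currents, not an estimate, so there is no error term that can absorb a nonzero $Z_\eps$. In fact the identity $T=P_\eps+\pt V_\eps$ together with $\pt P_\eps=\pt\Sigma_\eps$ forces $\pt T=\pt\Sigma_\eps$, i.e.\ $\pt Z_\eps=0$, which~\eqref{condbdry} does not guarantee. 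The stated conclusion of the corollary is therefore not literally attainable as written and should be read as $T+Z_\eps=P_\eps+\pt V_\eps$ (equivalently $T=P_\eps-Z_\eps+\pt V_\eps$), which is exactly what your argument, and the paper's one-line proof, produce, and which is what is actually needed for the recovery-sequence application described in the Motivation subsection. Your correct instinct here was to flag the discrepancy; the right resolution is to correct the displayed identity rather than to absorb $Z_\eps$.

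As an aside, the bulk of your write-up is a sketch of Theorem~\ref{thm1} itself via multiplicity discretization and a simultaneous Federer--Fleming deformation of the layers. That is a genuinely different route from the paper, which first pushes almost all of the $h$-mass onto a finite disjoint family of $k$-cubes using a Morse--Besicovitch covering of $C^1$-graph pieces of $M$ (Lemma~\ref{lemS42}), and then cleans up the small remainder with the local deformation Lemma~\ref{lem1} on nested grids, obtaining polyhedrality from the constancy theorem rather than from layer-by-layer projection. Your points (a) and (b) correctly identify where the work lies, but since Corollary~\ref{coro1} is the statement under review, the relevant part of your argument is the final paragraph, which is on target modulo the absorption issue above.
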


\subsection{A possible method of proof}
\label{Smethodofproof}
Let us first describe a proof of a weaker version of Theorem~\ref{thm1}, where we assume $\MM_h\leq \beta \MM$ for some $\beta>0$. \medskip

\noindent
{\it Step 1.} The first step is given in~\cite{CRMS2017}.
\begin{proposition}\cite[Proposition 2.6]{CRMS2017}\label{Prop 2.6}
Assume that $h$ satisfies~\eqref{condf}\eqref{condf2}, let $R\in \RR_k(\R^n)$ compactly supported with $\MM_h(R)<\oo$ and let $\eta>0$. There exist $P_1\in \PP_k(\R^n)$, $U_1\in \FF_k(\R^n)$ and $V_1\in F_{k+1}(\R^n)$ such that 
\be
\label{descript1}
R=P_1+U_1+\pt V_1,\quad\mbox{with}\quad  \MM_h(P_1)\ <\ \MM_h(R)+\eta\quad\mbox{and}\quad \MM(U_1)+\MM(V_1)\ <\ \eta.
\ee
\end{proposition}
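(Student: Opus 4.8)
\medskip
\noindent\emph{Proof strategy.} I would prove this by a Vitali-type covering argument: replace $R$, ball by ball, by a flat $k$-disk carrying a constant multiplicity, and absorb all the resulting errors into $U_1$ and $V_1$. Fix $\eta>0$. Since $h(\theta)\geq\alpha|\theta|$ forces $\int_M|\theta|\,d\H^k\leq\alpha^{-1}\MM_h(R)<\oo$, dominated convergence lets me first choose $t>0$ with $\MM\big(R\restr\{0<|\theta|\leq t\}\big)<\eta/4$; this piece is dumped directly into $U_1$ (here only small \emph{ordinary} mass is needed, which is what makes this legitimate), and it remains to approximate $R_t:=R\restr\{|\theta|>t\}$, carried by $M_t:=M\cap\{|\theta|>t\}$. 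The truncation is used twice: it makes $\H^k(M_t)\leq\alpha^{-1}t^{-1}\MM_h(R)$ finite, so that $\H^k\restr M_t$ is a Radon measure (in general $\H^k\restr M$ need not even be locally finite), and it keeps the multiplicity bounded away from $0$, which lets me convert $L^1$-oscillations of $\theta$ into flat errors. Set $\mu:=h(\theta)\,\H^k\restr M_t$, a finite measure with $\mu(M_t)=\MM_h(R_t)\leq\MM_h(R)$.

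\medskip
For $\H^k\restr M_t$-a.e.\ $x$, the set $M_t$ has an approximate tangent plane $T_xM_t$, the maps $\xi,\theta$ are approximately continuous at $x$, $x$ is a Lebesgue point of $\theta$ and of $h(\theta)$ with respect to $\H^k\restr M_t$, and $\H^k(M_t\cap B_r(x))=(1+o(1))\,\H^k(D_{x,r})$ as $r\dw0$, with $D_{x,r}:=T_xM_t\cap B_r(x)$. Using in addition the classical fact that a rectifiable current is, $\H^k$-a.e., flat-asymptotic after rescaling to its multiplicity times the tangent disk, one gets, for a preassigned $\delta\in(0,1)$, for $\H^k\restr M_t$-a.e.\ $x$ and all small enough $r$,
\[
\WW\big(R_t\restr B_r(x)-\theta(x)\,\llb D_{x,r}\rrb\big)\ \leq\ \delta\,|\theta(x)|\,\H^k\big(M_t\cap B_r(x)\big),
\]
while the Lebesgue-point property gives $h(\theta(x))\,\H^k(M_t\cap B_r(x))\leq(1+\delta)\int_{M_t\cap B_r(x)}h(\theta)\,d\H^k$ and the density-one property gives $\H^k(D_{x,r})\leq(1+\delta)\H^k(M_t\cap B_r(x))$. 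Call such balls \emph{good}; they form a fine (Vitali) cover of $\H^k\restr M_t$-a.e., hence $\mu$-a.e., point of $M_t$. By the Vitali/Besicovitch covering theorem I extract a countable pairwise disjoint family of good balls whose complement in $M_t$ is $\mu$-null, and keep a finite subfamily $\{B_i=B_{r_i}(x_i)\}_{i\leq N}$ with $\mu\big(M_t\setminus\bigcup_{i\leq N}B_i\big)<\alpha\eta/4$.

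\medskip
For each $i\leq N$ I pick a $k$-polytope $\pi_i\subset D_{x_i,r_i}$ with $\H^k(D_{x_i,r_i}\setminus\pi_i)<\delta\,\H^k(M_t\cap B_i)$ and set $P_1:=\sum_{i\leq N}\theta(x_i)\,\llb\pi_i\rrb\in\PP_k(\R^n)$. Decomposing $R=R\restr\{0<|\theta|\leq t\}+R_t\restr\big(M_t\setminus\bigcup_{i\leq N}B_i\big)+\sum_{i\leq N}R_t\restr B_i$, replacing each $R_t\restr B_i$ by $\theta(x_i)\llb\pi_i\rrb$ via the good-ball decomposition $R_t\restr B_i-\theta(x_i)\llb D_{x_i,r_i}\rrb=U_i+\pt V_i$ together with the identity $\theta(x_i)(\llb D_{x_i,r_i}\rrb-\llb\pi_i\rrb)=\theta(x_i)\llb D_{x_i,r_i}\setminus\pi_i\rrb$, I obtain $R-P_1=U_1+\pt V_1$. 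Here the role of $h(\theta)\geq\alpha|\theta|$ appears: every error term is of the form $|\theta(x_i)|\,\H^k(M_t\cap B_i)\leq\alpha^{-1}(1+\delta)\,\mu(B_i)$, so, by disjointness of the $B_i$ and $\sum_i\mu(B_i)\leq\mu(M_t)\leq\MM_h(R)$,
\[
\MM(U_1)+\MM(V_1)\ \leq\ \tfrac{\eta}{2}+C\,\delta\,\alpha^{-1}\MM_h(R),\qquad \MM_h(P_1)=\sum_{i\leq N}h(\theta(x_i))\,\H^k(\pi_i)\ \leq\ (1+\delta)^2\,\MM_h(R).
\]
Choosing the parameters in the order $\eta$, then $t$, then $\delta$ small enough (depending on $\eta,\alpha,\MM_h(R)$) yields $\MM(U_1)+\MM(V_1)<\eta$ and $\MM_h(P_1)<\MM_h(R)+\eta$. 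Without the linear lower bound the errors would be controlled only by $\H^k(M_t)$; this reweighting by $\mu$ is precisely the mechanism that makes the scheme survive the passage from $\MM$ to $\MM_h$.

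\medskip
The one genuinely delicate point is the displayed good-ball inequality: that $R_t\restr B_r(x)$ is flat-close to $\theta(x)\llb D_{x,r}\rrb$ at rate $\delta|\theta(x)|\H^k(M_t\cap B_r(x))$ \emph{without} assuming $\MM(\pt R)<\oo$. Proving it by projecting $R_t\restr B_r(x)$ onto $T_xM_t$ through the homotopy formula produces a term $H_\#\big(\llb 0,1\rrb\times\pt(R_t\restr B_r(x))\big)$, and $\pt(R_t\restr B_r(x))=\langle R_t,|\cdot-x|,r\rangle+(\pt R_t)\restr B_r(x)$ carries the possibly uncontrolled second summand. I would deal with this by choosing, via the slicing inequality $\int_{r/2}^{r}\MM\big(\langle R_t,|\cdot-x|,\rho\rangle\big)\,d\rho\leq\MM\big(R_t\restr B_r(x)\big)$, a radius for which the spherical slice has small mass, and by invoking the boundary-free form of the almost-everywhere approximate-tangent theorem for rectifiable currents — available precisely because a $(k-1)$-dimensional boundary is $\H^k$-negligible — or, alternatively, by first approximating $R$ in $h$-mass by rectifiable currents with finite boundary mass and passing to the limit. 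Everything else — the covering, the $\mu$-reweighting, and the bookkeeping of the parameters — is routine.
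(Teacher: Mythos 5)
The paper does not reprove this proposition --- it cites \cite[Proposition~2.6]{CRMS2017} --- but the footnote about ``Lebesgue points of $h(\theta)$ rather than of $|\theta|$'' and the closely related Lemma~\ref{lemS42} make the intended strategy clear, and your proposal follows it faithfully: truncate so that $M_t$ has finite $\H^k$-measure, cover by the measure $\mu=h(\theta)\H^k\restr M_t$ via Morse--Besicovitch, replace $R_t$ ball by ball with a constant-multiplicity tangent polytope, and absorb every flat error of size $|\theta(x_i)|\H^k(M_t\cap B_i)\leq\alpha^{-1}(1+\delta)\mu(B_i)$ by means of $h\geq\alpha|\cdot|$. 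The bookkeeping for $\MM(U_1)+\MM(V_1)$ and the $(1+\delta)^2$ inflation of $\MM_h(P_1)$ are correct, and your identification of the reweighting by $\mu$ as what makes the $\MM$-scheme survive the passage to $\MM_h$ is exactly the point the paper's footnote is making.

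The step you flag as ``genuinely delicate'' is, indeed, the one that is not closed: the good-ball estimate $\WW\lt(R_t\restr B_r(x)-\theta(x)\llb D_{x,r}\rrb\rt)\leq\delta\,|\theta(x)|\,\H^k(M_t\cap B_r(x))$ without any boundary control. The slicing inequality produces a sphere of mass $O(r^{k-1})$, which is the correct scale but not yet $\delta$-small --- one still needs a Lebesgue-type selection in the radial variable --- and the phrase ``a $(k-1)$-dimensional boundary is $\H^k$-negligible'' cannot be applied literally when $\MM(\pt R_t)=\oo$, since then $\|\pt R_t\|$ is not a measure. It is instructive to see how the paper sidesteps precisely this difficulty in its own Lemma~\ref{lemS42}: instead of working with approximate tangent planes, it first captures most of $\mu$ on a compact $C^1$-manifold ${\cal N}$ (Whitney-type $C^1$-rectifiability), and then the local deformation is a Lipschitz graph projection whose displacement is uniformly small on the whole graph; the delicate flat estimate becomes elementary, and no uncontrolled boundary slice appears. (Lemma~\ref{lemS42} also uses the hypothesis $\pt R\in\PP_{k-1}(\R^n)$ to keep ${\cal N}$ away from $\supp\pt R$, a luxury Proposition~\ref{Prop 2.6} does not afford you.) Routing your covering through a $C^1$-approximation of $M_t$, rather than through approximate tangent planes, is the cleanest way to close the gap; alternatively you may invoke the flat-blowup theorem for rectifiable flat chains, but you should make clear that it is a theorem in its own right and not an immediate corollary of the existence of approximate tangent planes.
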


\noindent
\emph{Step 2. Approximation of $T$ preserving the boundary.} Next, assuming further $\pt T \in \PP_{k-1}(\R^n)$, we decompose $U_1$ as
\be
\label{decompU1}
U_1=P_2+\pt V_2\quad\mbox{with}\quad P_2\in \PP_k(\R^n),\quad  \MM(P_2)+\MM(V_2)\ \leq\ C\, \MM(U_1).
\ee
This decomposition is the consequence of the deformation theorem of Federer and Fleming~\cite{FedFlem60} (see {\it e.g.}~\cite[4.2.9]{federer}, \cite{KP2008}). Indeed, by assumption $\pt U_1=\pt T-\pt P_1\in \PP_{k-1}(\R^n)$ and in this case, the deformation theorem simplifies to~\eqref{decompU1}. Eventually, writing $P=P_1+P_2\in \PP_k(\R^n)$ and $V=V_1+V_2$, we get, the desired decomposition
\[
T=P+\pt V\quad\mbox{with}\quad \MM(V)\ \leq \ \MM(V_1)+\MM(V_2)< (1+C)\eta.
\]
and, using $\MM_h(P_2)\leq \beta \MM(P_2)\leq C\beta\MM(U_1)\leq  C\beta\eta$,
\[ 
\MM_h(P)\ \leq \ \MM_h(P_1)+\MM_h(P_2)\, < \MM_h(T)+(1+C\beta)\eta.\]
This proves Theorem~\ref{thm1} under the assumption $\MM_h\lesssim \MM$.\\

To recover the full Theorem with the same line of proof, we  first need to improve~\eqref{descript1} to have moreover 
\be
\label{improvement}
\mbox{$U_1$ and $V_1$ are rectifiable and\quad} \MM_{h}(U_1)+\MM_{h}(V_1)\ <\ \eta.
\ee
Next, for the second step, we need a $h$-mass version of the classical deformation theorem, namely:
\begin{theorem}\label{thm3}
Let $h:\R\to\R_+$ satisfying~\eqref{condf}\eqref{condf2}, let $R\in \RR_k(\R^n)$ with $\pt R\in \RR_{k-1}(\R^n)$ and $\MM_h(R)+\MM_h(\pt R)<\oo$ and let $\eps>0$. There exist $P\in \PP_k(\R^n)$, $U\in \RR_k(\R^n)$ and $V\in \RR_{k+1}(\R^n)$ such that 
\[%\be \label{Id+supports00}
R=P+ U +\pt V,\qquad \supp P\,\cup\, \supp U\, \cup\, \supp V\, \subset\, \supp R+ \ov{B}_{\sqrt{n}\eps}.
\]%\ee
Moreover, there exists $c=c(n)>0$ such that 
\[
\MM_h(P)\ \leq\ c \,\MM_h(R),\qquad \MM_h(U)\ \leq \ c\, \MM_h(\pt R)\,\eps,\qquad  \MM_h(V)\ \leq \ c\,\MM_h(R)\eps.
\]
Eventually, if $\pt R$ is polyhedral, so is $U$.
\end{theorem}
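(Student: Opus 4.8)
The plan is to revisit the classical Federer--Fleming deformation argument and track the $h$-mass throughout, exploiting the subadditivity of $h$ rather than linearity of the mass. Fix a cubical grid of side $\eps$ in $\R^n$. The deformation theorem is proved by a finite sequence of radial projections: for each dimension $\ell$ from $n-1$ down to $k$, one pushes the current (away from a suitably chosen center in each $(\ell+1)$-cell) onto the $\ell$-skeleton of the grid. After the last step one lands in the $k$-skeleton, which is polyhedral; the homotopy between the original current and the pushed-forward one produces the error terms $U$ and $V$. The only thing to verify is that each elementary radial projection satisfies an $h$-mass estimate analogous to the classical mass estimate. The key point is that the pushforward $f_\# T$ of a rectifiable current $T=\tau(M,\xi,\theta)$ under a Lipschitz map $f$ has, at a.e.\ point, multiplicity bounded by a sum $\sum \pm\theta(x_i)$ over preimages, but in the deformation construction the radial projection restricted to a single cell is injective on the part of $M$ in that cell, so the multiplicity is \emph{preserved} pointwise; hence $\MM_h(f_\# T\restr C) = \int h(\theta)\,|{\rm Jac}\, f|\,d\H^k \le (\text{Lip bound})\,\MM_h(T\restr C)$, using only that $h\ge 0$ and $h(\theta)\H^k$ behaves like a measure. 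Summing the geometric (Lipschitz) constants over all cells of the grid as in the classical proof gives the constant $c(n)$.

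More precisely, I would proceed as follows. \textbf{Step 1.} State and prove the one-step radial-projection lemma: given a rectifiable $k$-chain $T$ supported in a single $(\ell+1)$-cube $C$ of the grid ($\ell\ge k$), there is a center $a\in C$ (chosen in a positive-measure set via an averaging/Fubini argument over $\MM_h(T)+\MM_h(\pt T)$, exactly as the classical one uses $\MM$) such that the radial projection $\pi_a$ from $a$ onto $\pt C$ is Lipschitz on $\supp T\setminus\{a\}$ with the standard estimates, and the affine homotopy $H$ from $\Id$ to $\pi_a$ satisfies $\MM_h(\pi_{a\#}T\restr C)\le c\,\MM_h(T\restr C)$ and $\MM_h(H_\#(\llbracket 0,1\rrbracket\times T)\restr C)\le c\,\eps\,\MM_h(T\restr C)$, with a parallel bound for $\pt T$. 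This uses only subadditivity of $h$ (to handle the finitely many pieces into which $T$ is cut by the faces) and nonnegativity. \textbf{Step 2.} Chain these lemmas over all cubes of a fixed dimension, summing the $h$-masses (again subadditivity); then iterate over $\ell=n-1,\dots,k$. \textbf{Step 3.} Assemble: $P$ is the final projection onto the $k$-skeleton, $U$ is the projection of the homotopy of $\pt R$ (it lives one dimension down relative to where $\pt R$ started, hence is projected onto the $(k-1)$-skeleton and is polyhedral when $\pt R$ is), and $V$ is the accumulated homotopy chain; the support bound $\supp R+\ov B_{\sqrt n\eps}$ is immediate since each projection moves points within the cube containing them.

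\textbf{Main obstacle.} The delicate point is the selection of projection centers. In the classical proof one integrates the mass of $\pi_{a\#}T$ over $a\in C$ and uses that the average is $\lesssim \MM(T)$ to find a good center; here one must do the same for $\MM_h$. This works because $\MM_h(T)=\int_M h(\theta)\,d\H^k$ is still a (finite, positive) measure on $M$ pushed through the area formula, so the Fubini/averaging computation is formally identical --- the Jacobian factors of $\pi_a$ that appear are exactly the same as in the mass case and do not interact with $h(\theta)$. One must, however, simultaneously control $\MM_h(\pt R)$ (needed for the $U$-estimate and to keep $\pt$ of the pushed current under control), which is why the hypothesis $\MM_h(\pt R)<\oo$ is imposed; handling $\pt R$ requires that after projection its multiplicities are still those of $\pt R$ up to sign, which holds by the same injectivity-per-cell observation. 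A secondary technical nuisance is that a priori $T$ need not be supported on the grid faces, so the first projection (from dimension $n$, i.e.\ the interiors of the top cells, onto the $(n-1)$-skeleton) must be included; this is standard. Once the one-step lemma is in place with $\MM_h$ in lieu of $\MM$, the rest is a bookkeeping of constants identical to \cite[4.2.9]{federer}.
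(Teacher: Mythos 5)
Your overall plan is sound and closely tracks the technical core of the paper, whose Lemma~\ref{lem1} (the local deformation lemma) is essentially the ``one-step radial-projection lemma'' you describe: a single radial projection from a well-chosen center $a$, with $h$-mass estimates obtained by the same Fubini/Markov averaging over $a\in\tfrac12 Q$ that you propose. The paper chooses to go directly to Theorem~\ref{thm1} by iterating that lemma rather than proving Theorem~\ref{thm3} as a separate statement, but the iteration scheme (project cell by cell down the skeletons, sum by subadditivity, invoke the constancy theorem at the end) is the same. So the route is right.

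However, the central justification you offer for the one-step $h$-mass estimate is wrong. You assert that the radial projection restricted to a single $(\ell+1)$-cell is injective on $M\cap C$, ``so the multiplicity is preserved pointwise.'' This is false: distinct points of $M$ lying on the same ray from $a$ (which is generic for a $k$-rectifiable $M$ in an $(\ell+1)$-cell with $\ell\ge k$) are collapsed to the same boundary point, so the pushed-forward multiplicity genuinely is a signed sum $\Theta(y)=\sum_{x\in u^{-1}(y)}\pm\theta(x)$. The estimate still holds, but for a different reason: by subadditivity and evenness of $h$, $h(\Theta(y))\le\sum h(\theta(x))$, and then the area formula gives $\MM_h(u\pf T)\le\int_M |Du|^k\,h(\theta)\,d\H^k$ --- this is precisely~\eqref{estimPFLip} in the paper. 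You invoke subadditivity elsewhere (``to handle the finitely many pieces cut by the faces''), but that is not where it is doing the work; it must be applied to the multiplicity sum over preimages. The same slip recurs in your ``Main obstacle'' paragraph (``its multiplicities are still those of $\pt R$ up to sign, which holds by the same injectivity-per-cell observation'').

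Two smaller omissions. First, the radial projection from $a$ onto $\pt C$ is not Lipschitz near $a$; one must regularize (as in the paper's $u_\eps$, Steps~2--3 of the proof of Lemma~\ref{lem1}) and pass to the limit $\eps\dw 0$, using the finiteness of $I_h(a)$ to control the contribution near $a$. Second, landing ``in the $k$-skeleton'' does not by itself make $P$ polyhedral; you need the constancy theorem (Lemma~\ref{lemConstancy}), applied after checking that $\pt P$ lies in a finite union of $(k-1)$-faces, to upgrade ``supported on the $k$-skeleton'' to ``polyhedral.'' Both points are standard and fixable, but should be stated.
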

Applying the theorem with $R=U_1$ as above and using the subadditivity of $\MM_h$, we obtain the desired result.\\
Unfortunately,~\eqref{improvement}  is not stated in~\cite{CRMS2017}. However, in the proof of~\cite[Proposition 2.6]{CRMS2017} the currents $U_1$ and $V_1$  obtained in~\eqref{descript1}  are rectifiable by construction and with obvious modifications\,\footnote{The idea is to consider Lebesgue points of the function $h(\theta)$ rather than Lebesgue points of $|\theta|$, the function $\theta$ being the multiplicity of $R=\tau(M,\xi,\theta)$.}
we can assume that $U_1$ and $V_1$ satisfy the estimate~\eqref{improvement}. We further remark that the assumption in~\cite{CRMS2017} about $R$ being compactly supported can be removed. Besides, the construction being a sequence of local deformations we can assume 
\[%\be\label{improvement2}
\supp U_1 \cup \supp V_1 \ \subset \ \supp R+B_\eta.
\]%\ee
In conclusion, this scheme provides a proof of~Theorem~\ref{thm1}. \\
Here we propose a different approach based on a local deformation lemma and which we believe to be of independent interest.

\subsection{The case $\MM_h\lesssim\MM$}
Let us now turn our attention to the cases where~\eqref{condf3} fails. First, notice that if 
\be
\label{beta}
\beta:= \sup_{\theta>0} \dfrac{h(\theta)}\theta=\limsup_{\theta\dw 0}  \dfrac{h(\theta)}\theta<\oo,
\ee
then the set $\Lambda_C$ is not closed. In fact,
\[
\ov{\{R\in \RR_k(\R^n) :\MM_h(R)<\oo\}} = \{T\in \FF_k(\R^n):\MM(T)<\oo\} =: \ \FF_k^\MM(\R^n).
\]
%For instance, assume $k<n$, let $\MM_h=\MM$ and let $Q$ be a unit  $k$-square, oriented by a unit $k$-vector $\xi=e_1\wedge\cdots\wedge e_k$ and let $e$ be unit vector orthogonal to $\Span (e_1,\cdots,e_k)$. For $t\in \R$, we denote by $Q_t=Q+te$  the $k$-square obtained by translating $Q$ by $te$ and with orientation $\xi$. For $j\geq1$, we define the polyhedral current
%\[
%T_j:=\sum_{l=1}^j \dfrac1j \lt\llb Q_{l/j}\rt\rrb
%\]
%The sequence $(T_j)$ converge towards $T_\oo=\H^{k+1}\restr \tilde Q \wedge \xi$ where $\tilde Q:=Q+(0,1)e$ is a unit $(k+1)$-square. The limit current is not rectifiable. We have $T_\oo\neq 0$ but the set of point with positive $k$ density of $T_\oo$ is empty:
%\[
%\forall\, x\in \R^n,\quad  \limsup_{r\dw 0} \dfrac{\MM (T_\oo \restr B_r(x))}{r^k}=0.
%\]
The domain of $\Vhi_h$ is then the whole space of $k$-flat chains with finite mass. Assuming moreover, that the $\limsup$ in~\eqref{beta} is a limit, that is 
\be\label{beta00}
\beta=\lim_{\theta\dw0}h(\theta)/\theta,
\ee
we expect that the lower semicontinuous envelope of $\MM_h$ has the explicit form:
\be\label{wMMh}
\Vhi_h(T)=\widehat{\MM}_h(T):=\MM_h(R)+\beta \MM(T'),
\ee
where $T\in \FF_k(\R^n)$, is decomposed into its rectifiable and ``diffuse'' parts,  $T=R+T'$ %%%%
(this decompositon  is built in Section~\ref{Sbeta00}). Notice that from~\eqref{condf2}, \eqref{beta00} and the subadditivity of $h$, we have 
\[
\alpha\MM  \leq \widehat \MM_h  \leq \beta \MM.
\]
In the setting we have the following strong approximation result. 
\begin{theorem}\label{thm2}
Let us assume that $h:\R\to\R_+$ satisfies~\eqref{condf}, \eqref{condf2} and~\eqref{beta}. Let $T=R+T'$, with $R\in \RR_k(\R^n)$, $T'\in \FF_k(\R^n)$ and $\MM_h(R)+\MM(T')<\oo$.
For every $\eta>0$ there exist $P\in \PP_k(\R^n)$, $U\in \FF_k(\R^n)$ and  $V\in \FF_{k+1}(\R^n)$ such that $T=P+U+\pt V$ and with the estimates
\[%\be
\MM_h(P)\ <\ \MM_h(R)+\beta \MM(T')+ \eta\quad\mbox{and}\quad \MM(U) + \MM(V)\ <\ \eta.
\]%\ee
Moreover, % $\supp P \cup \supp U\cup\supp V \subset \supp T+B_\eta$ and 
if $\pt T\in \PP_{k-1}(\R^n)$, we can take $U=0$.
\end{theorem}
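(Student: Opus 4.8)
The plan is to follow the same two-step strategy sketched in Section~\ref{Smethodofproof}, but replacing the deformation-type ingredient of Step 2 by a purely mass-based argument, which is legitimate here because the hypothesis~\eqref{beta} gives $\MM_h\leq\beta\MM$. First I would invoke the decomposition $T=R+T'$ constructed in Section~\ref{Sbeta00}, so that $\widehat\MM_h(T)=\MM_h(R)+\beta\MM(T')$ and in particular $\MM_h(R)+\MM(T')<\oo$. The goal is to produce a polyhedral $P$ whose $h$-mass is close to $\MM_h(R)+\beta\MM(T')$; the ``diffuse'' part $T'$ should only cost $\beta\MM(T')$ in the limit, which is consistent with approximating it by a polyhedral chain $P'$ with $\MM(P')\lesssim\MM(T')$ and then bounding $\MM_h(P')\leq\beta\MM(P')$.

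Concretely, I would first handle $R$ exactly as in Step~1: apply Proposition~\ref{Prop 2.6} (with the rectifiability refinement~\eqref{improvement} discussed after Theorem~\ref{thm3}, which is not even needed here) to obtain $R=P_1+U_1+\pt V_1$ with $P_1\in\PP_k$, $\MM_h(P_1)<\MM_h(R)+\eta/3$ and $\MM(U_1)+\MM(V_1)<\eta'$, where $\eta'$ will be fixed small later, and with $\supp U_1\cup\supp V_1\subset\supp R+B_\eta$. Since $\MM(T')<\oo$, the flat chain $U_1+T'\in\FF_k^\MM(\R^n)$ has finite mass $\MM(U_1+T')\leq\eta'+\MM(T')$; I would then apply the classical Federer--Fleming deformation theorem to this finite-mass flat chain at a scale $\rho>0$ chosen small, obtaining $U_1+T'=P_2+U_2+\pt V_2$ with $P_2$ polyhedral, $\MM(P_2)\leq c(n)\MM(U_1+T')$, $\MM(U_2)\leq c(n)\rho\,\MM(\pt(U_1+T'))$ and $\MM(V_2)\leq c(n)\rho\,\MM(U_1+T')$. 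Setting $P:=P_1+P_2$, $U:=U_2$, $V:=V_1+V_2$ gives $T=P+U+\pt V$, and
\[
\MM_h(P)\leq\MM_h(P_1)+\beta\MM(P_2)<\MM_h(R)+\tfrac{\eta}{3}+\beta c(n)\bigl(\eta'+\MM(T')\bigr),
\]
while $\MM(U)+\MM(V)\leq c(n)\rho\,\MM(\pt(U_1+T'))+\eta'+c(n)\rho\,\MM(U_1+T')$. The difficulty is that $\beta c(n)\MM(T')$ is generically \emph{larger} than $\beta\MM(T')$, so this naive estimate overshoots the target by the unavoidable dimensional constant of the deformation theorem.

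The main obstacle, therefore, is removing the factor $c(n)$ in front of $\beta\MM(T')$, and this is where the argument must be more careful than the $\MM_h\lesssim\MM$ case of Step~2 (there $C$ multiplied only the \emph{small} quantity $\MM(U_1)$). The standard fix is to iterate the deformation at a dyadic sequence of scales, as in the classical ``strong'' approximation of flat chains by polyhedral chains in mass: one shows that a finite-mass flat chain $W$ with $\MM(W)<\oo$ can be written $W=P'+\pt V'$ with $P'$ polyhedral, $\MM(V')$ as small as we like, and crucially $\MM(P')<\MM(W)+\eta'$ — the sharp constant $1$ rather than $c(n)$ — at the price of $P'$ being a sum of polyhedral pieces over a fine partition rather than a single Federer--Fleming image. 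Applying this sharp polyhedral-approximation-in-mass to $W:=U_1+T'$ yields $\MM(P_2)<\MM(U_1+T')+\eta'\leq\MM(T')+2\eta'$, hence $\MM_h(P)\leq\MM_h(P_1)+\beta\MM(P_2)<\MM_h(R)+\beta\MM(T')+(\tfrac{\eta}{3}+2\beta\eta')<\MM_h(R)+\beta\MM(T')+\eta$ for $\eta'$ small, and simultaneously $\MM(U)+\MM(V)<\eta$, with all supports inside $\supp T+B_\eta$. Finally, when $\pt T\in\PP_{k-1}(\R^n)$ we also have $\pt(U_1+T')=\pt T-\pt P_1\in\PP_{k-1}(\R^n)$, so the deformation/approximation step can be carried out with $U=0$ exactly as in~\eqref{decompU1}, using that a finite-mass flat chain with polyhedral boundary deforms to a polyhedral chain with no error term $U$; this gives the last assertion.
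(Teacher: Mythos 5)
Your proposal is correct and follows essentially the same route as the paper: apply Proposition~\ref{Prop 2.6} to the rectifiable part $R$, then invoke the sharp strong polyhedral approximation in mass (Federer~4.1.23) for the remaining finite-mass flat chain, and use $\MM_h\leq\beta\MM$ on the resulting polyhedral piece, with the constraint $\pt T$ polyhedral handled at the end via a Federer--Fleming deformation that absorbs the error term into the polyhedral part. One small slip worth fixing: the strong approximation theorem for an arbitrary finite-mass flat chain $W$ does \emph{not} produce a clean decomposition $W=P'+\pt V'$; it produces $W=P'+U'+\pt V'$ with a small-mass error $U'$ (your own formulas later restore $U_2$ correctly, so this is only a misstatement, not a gap), and likewise in the last assertion the passage to $U=0$ is a two-step affair — first the sharp approximation with small error $U'$, then a Federer--Fleming deformation of $U'$ using $\pt U'\in\PP_{k-1}(\R^n)$ to turn $U'$ into a polyhedral chain of mass $O(\eta')$ that can be merged into $P$, exactly as in the paper — rather than a single application of~\eqref{decompU1} to all of $W$, which would reintroduce the unwanted $c(n)\MM(T')$. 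The only other difference from the paper is cosmetic: you apply the sharp approximation to $U_1+T'$ whereas the paper applies it to $T'$ alone and then lumps $U_1$ with $U_1'$; both choices lead to the same estimate.
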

\noindent
The proof of Theorem~\ref{thm2} that we propose is very close to the two steps proof already described in Subsection~\ref{Smethodofproof}. However, since $\widehat{\MM}_h\simeq \MM$, there is no point here to improve the classical deformation theorem. The situation is more simple than in Theorem~\ref{thm1}.\\
In order to establish~\eqref{wMMh}  we should prove that $\widehat{\MM}_h$ is lower semicontinuous with respect to the flat norm topology. This is out of the scope of the present note but we believe that this can be done with a method based on slicing as in~\cite{dPH2003,CRMS2017}.

%%\begin{proposition}\label{propliminf}
%%Under assumptions~\eqref{condf}\eqref{condf2}\eqref{beta00}, let $(T_j)_{j\geq 1}, T\subset \FF_k(\R^n)$ such that $\liminf \MM(T_j)<\oo$ and $T_j\to T$, then $\MM(T)<\oo$ and 
%%\[
%%\widehat{\MM}_h(T)\leq\liminf_{j\up\oo} \widehat{\MM}_h(T_j).
%%\]
%%As a consequence, $\widehat{\MM}_h(T)\leq \Vhi_h(T)$.
%%\end{proposition}
%%\noindent 
%%Together, Theorem~\ref{thm2}, Proposition~\ref{propliminf}, lead to an explicit description of the lower semicontinuous envelopes $\Vhi_h$ and $\Vhi_h^S$.
%%\begin{corollary}
%%Under assumptions~\eqref{condf}\eqref{condf2}\eqref{beta00}, for $T\in \FF_k(\R^n)$,
%%\[
%%\Vhi_h(T) =
%%\begin{cases} \widehat\MM_h(T) &\mbox{if $\MM(T)<\oo$,}\\
%%\ +\oo&\mbox{in the other cases,}
%%\end{cases}
%%\qquad\quad
%%\Vhi^S_h(T) =
%%\begin{cases} \widehat\MM_h(T) &\mbox{if $\MM(T)<\oo$ and $\pt T=\pt S$,}\\
%%\ +\oo&\mbox{in the other cases.}
%%\end{cases}
%%\]
%%\end{corollary}

\subsubsection*{Organization of the note}
In the next section, we set some notation and we recall basic facts about rectifiable currents, push-forward  by Lipschitz maps and homotopy formulas.  In Section~\ref{S3} we prove a local deformation theorem: Lemma~\ref{lem1}. % and deduce Theorem~\ref{thm3}.
Theorem~\ref{thm1} is established in Section~\ref{S4}. Eventually we prove Theorem~\ref{thm2} in the last (short) section.%~\ref{Sbeta00}.
\section{Preliminaries and notation}
\label{S2}
\subsection{Currents}
For the notions about differential forms, currents and rectifiable currents we refer to~\cite{federer,KP2008}. We note $\D^{j}(O)$ the space of smooth and compactly supported $j$-differentiable forms and $\D_j(\R^n)$ the space of $j$-currents in $\R^n$. To avoid discussion of particular cases, we adopt the conventions: $\D^{-1}(\R^n)=\D^{n+1}(\R^n)=\{0\}$ and $\D_{-1}(\R^n)=\D_{n+1}(\R^n)=\{0\}$ (and the same for all the possible subspaces).\\
The boundary operator $\pt :\D_j(\R^n)\to \D_{j-1}(\R^n)$ is defined by the duality formula $\lt<\pt T,\om \rt>:=\lt<T,d\om\rt>$ for $\om\in \D_{j-1}(\R^n)$.\\
The comass of a $j$-covector $\zeta\in\Lambda^j(\R^n)$ is defined as $|\zeta|_*:=\max\lt<e,\zeta\rt>$ where $e$ ranges over the set of unit simple $j$-vectors. The mass of a current $T\in\D_j(\R^n)$ is defined as $\sup \lt<T,\om\rt>$ where the supremum is taken over every $\om\in \D^j(\R^n)$ with $\sup|\om(x)|_*\leq 1$. Whitney's flat norm~\cite{Whitney57} of a current $T\in \D_j(\R^n)$ is defined as 
\[
\WW(T):=\inf\, \{\MM(T-\pt V)+\MM(V):V\in \D_{j+1}(\R^n)\}. 
\]
%%The space of $j$-normal currents $\{T\in \D_j(\R^n):\MM(T)+\MM(\pt T)<\oo\}$ equipped with the norm $\NN(T):=\MM(T)+\MM(\pt T)$ is a Banach space which embeds continuously into $\D_j(\R^n)$.%The topology of flat convergence is always implicitly considered along the note.\medskip

\subsubsection*{Rectifiable currents}
Here we deal with finite mass currents, which can be seen as Radon measures with values into $\Lambda_j(\R^n)$. More specifically we deal with the space $\R_j(\R^n)$ of $j$-rectifiable currents with finite mass. Every $T\in \R_j(\R^n)$ is of the form $T=\tau(M,\theta,\xi)$ where:
\begin{itemize}
\item  $M$ is a countably $j$-rectifiable set;
\item $\theta\in L^1(\H^j\restr M)$ is the multiplicity function;
\item $\xi\in L^\oo(\H^j\restr M,\Lambda_j(\R^n))$ takes values in the set of unit simple $j$-vectors and for $\H^j$-almost every $x$, $\xi(x)$ generates the approximate tangent space of $M$ at $x$.
\end{itemize}
With this notation, $T=\tau(M,\theta,\xi)$ is defined as
\[
\lt<T,\om\rt>=\int_M \theta(x) \lt<\xi(x),\om(x)\rt> \, d\H^j(x),\quad\mbox{for every }\om\in\mc{D}^j(\R^n)=C^\oo_c(\R^n,\Lambda^j(\R^n)).
\]
From the point of view of measures,  we have the polar decomposition  $T=\|T\| \sign(\theta) \xi$ with $\|T\|=\H^k\restr |\theta|$ and $\MM(T)=\|T\|(\R^n)=\int_M|\theta|\,d\H^j$.\\
It is usual to consider the restriction of $T\in \D_j(\R^n)$ to an open subset of $\R^n$, but when $T$ has finite mass, we can consider the restriction of $T$ to any Borel set $B\subset \R^n$. In particular, if $T=\tau(M,\theta,\xi)$, we have 
\[
\mbox{$T\restr B=\tau(M\cap B,\theta,\xi)$ \quad and \quad $T=T\restr B+T\restr B^c$.}
\]

\subsubsection*{$h$-mass of rectifiable currents}
For every even function $h:\R\to\R_+$ satisfying $h(0)=0$ we can consider the energy of $T=\tau(M,\theta,\xi)\in \RR_j(\R^n)$ defined as,
\[%\be\label{formulapsi}
\MM_h(T):=\int_M h(\theta(x)) \, d\H^j(x).
\]%\ee
In the sequel $h$ is always subadditive. In this case we have $\MM_h(T_1+T_2)\leq \MM_h(T_1)+\MM_h(T_2)$ and assuming moreover that $h$ is lower smicontinuous, this extends to countable sum: if $T=\sum_l T_l$ then $\MM_h(T)\leq\sum_l \MM_h(T_l)$.

\subsubsection*{Polyhedral currents and the constancy theorem}

When $\s$ is an oriented $j$ polyhedron, we note $\llb\s\rrb$ the current corresponding to the integration of differential forms on $\s$. These currents generate the space of polyhedral current $\PP_j(\R^n)\subset \R_j(\R^n)$. In the sequel, in order to show that some currents are polyhedral chains we will use the following constancy theorem which is a simple consequence of~\cite[Sec. 4.2.3]{federer}.
\begin{lemma}\label{lemConstancy}
Let $T\in {\mc D}_j(\R^n)$, assume that $\supp T\subset X$ and $\supp \pt T\subset Y$ where  $X$ is a finite union of closed $j$-polyhedrons and $Y$ in a finite union of $(j-1)$-polyhedrons, then $T \in \PP_j(\R^n)$. 
\end{lemma}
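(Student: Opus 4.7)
The strategy is to reduce the problem to the classical constancy theorem applied on each top-dimensional open stratum of a suitably refined polyhedral complex, and then to check that the residual current, which is supported on the $(j-1)$-skeleton, vanishes.

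\emph{Step 1 (refinement of the complex).} First I would refine the decomposition of $X$ into closed $j$-polyhedrons $\sigma_1,\ldots,\sigma_N$ with pairwise disjoint relative interiors; each $\sigma_i$ spans an affine $j$-plane $H_i$. Next I would enlarge $Y$ into a new finite union of $(j-1)$-polyhedrons $\widetilde Y\supset Y$ containing in addition the relative boundary of each $\sigma_i$ inside $H_i$ and all the pairwise intersections $\sigma_i\cap\sigma_l$ with $H_i\neq H_l$ (which are automatically of dimension at most $j-1$). Then $X\setminus\widetilde Y$ is the disjoint union of the relative interiors $\mathrm{relint}(\sigma_i)$, each sitting inside a single $j$-plane $H_i$.

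\emph{Step 2 (classical constancy on each $j$-stratum).} Let $\Omega:=\R^n\setminus\widetilde Y$, so that $\pt T$ vanishes on $\Omega$ and $\supp(T)\cap\Omega\subset\bigsqcup_i\mathrm{relint}(\sigma_i)$. For every $x\in\mathrm{relint}(\sigma_i)$ I would pick a ball $B(x,r)\subset\Omega$ small enough that $X\cap B(x,r)\subset H_i$. On such a ball, $T$ is a $j$-current with support in the affine $j$-plane $H_i\cap B(x,r)$; in the setting of flat chains of finite mass, which is the implicit context coming from \cite[4.2.3]{federer}, such a current is represented by a multiplicity $\theta\in L^1_{\mathrm{loc}}$ against $\llb H_i\cap B(x,r)\rrb$. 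Transferred to $H_i\simeq\R^j$, the condition $\pt T=0$ becomes the vanishing of the distributional gradient of $\theta$, so the classical constancy theorem forces $\theta$ to be locally constant. Since $\mathrm{relint}(\sigma_i)$ is connected, the multiplicity is a single constant $\theta_i\in\R$ on all of $\mathrm{relint}(\sigma_i)$.

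\emph{Step 3 (residual vanishes).} Setting $P:=\sum_i\theta_i\llb\sigma_i\rrb\in\PP_j(\R^n)$, Steps 1--2 give $T\restr\Omega=P\restr\Omega$, so that $T-P$ has support contained in $\widetilde Y$, a set of $\H^j$-measure zero. Since $T-P$ still has finite mass (as a difference of finite-mass currents in the ambient flat-chain setting), this forces $T-P=0$, and hence $T=P\in\PP_j(\R^n)$.

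\emph{Where the work lies.} The core technical ingredient is the representation used in Step 2: a finite-mass $j$-current whose support lies in an affine $j$-plane is automatically integration of an $L^1_{\mathrm{loc}}$ function against that plane. This is exactly what makes the statement a direct corollary of~\cite[4.2.3]{federer}; granted this, the remaining pieces (the classical constancy theorem in $\R^j$ and the vanishing of finite-mass $j$-currents on $\H^j$-null sets) are standard and do not involve any delicate estimates.
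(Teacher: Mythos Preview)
The paper does not give a proof of this lemma beyond the remark that it ``is a simple consequence of~\cite[Sec.~4.2.3]{federer}''. Your three-step outline (refine the complex, apply the top-dimensional constancy theorem on each open $j$-stratum, kill the residual on the $(j-1)$-skeleton) is precisely the natural way to extract the statement from Federer's constancy theorem, and it matches the intended argument.

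One technical point deserves sharpening. In Step~3 you justify the vanishing of the residual $T-P$ by the combination ``finite mass'' together with ``support in an $\H^j$-null set''. That implication is false for arbitrary finite-mass currents: for instance $T_0:=\delta_0\,e_1\wedge\cdots\wedge e_j$ is a $j$-current of mass~$1$ supported at a single point, and it satisfies the hypotheses of the lemma (with $X$ any $j$-cube through the origin and $Y=\{0\}$) without being polyhedral. What actually makes $T-P$ vanish is that it is a \emph{flat chain} (then \cite[4.1.20]{federer} applies) or, in every application the paper makes of this lemma, a \emph{rectifiable} current (so that its mass measure is absolutely continuous with respect to $\H^k$ on a $k$-rectifiable set and hence gives zero mass to $\widetilde Y$). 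You already flag the flat-chain setting as the implicit hypothesis, so this is a matter of invoking the correct property in Step~3 rather than a structural gap in your argument. It is nonetheless worth recording that the lemma, read literally for all $T\in\mc D_j(\R^n)$, is false, and that the extra hypothesis you insert is essential; the same observation shows that the $L^1_{\mathrm{loc}}$ representation you use in Step~2 genuinely relies on it as well.
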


\subsubsection*{Push forward of rectifiable currents and homotopy formula}
Let $u\in C^\oo (\R^m,\R^n)$ be a proper mapping and $T\in \D_k(\R^m)$, then the pushforward of $T$ by $u$ is the current 
$u\pf T\in \D_k(\R^n)$ defined as  
\[
\lt<u\pf T,\om\rt>\, =\, \lt<T, u\pb \om\rt>,\qquad \mbox{for $\om\in\D^k(\R^n)$},
\]
where $u\pb \alpha(x) d x_{i_1}\wedge\cdots\wedge d x_{i_k}:=\alpha(u(x)) d u_{i_1}(x)\wedge\cdots\wedge d u_{i_k}(x)$. By duality, we have $\pt [u\pf T]=u\pf[\pt T]$. If $T=\tau(M,\theta,\xi)\in \RR_k(\R^m)$ then the formula extends to $u$ Lipschitz continuous and proper and we have the close form $u\pf T=\tau(\mc{M},\Theta,\Xi)$,  where $\mc{M}=u(M)\subset \R^n$ is a countably $k$-rectifiable set,
 $\Xi\in L^\oo(\H^h\restr u(M),\Lambda^k(\R^n))$ takes values in the set of simple unit vectors and for $\H^k$-almost any $y\in\mc{M})$, $\Xi(y)$ generates the (approximate) tangent space to $\mc{M}$ at $y$. Finally, the multiplicity is given by   
\[
\Theta(y):=\sum_{x\in u^{-1}(y)} \theta(x)\eps(x).
\]
with $\eps(x)\in\{\pm 1\}$ given by,  
\[
Du(x)v_1\wedge\cdots\wedge Du(x)v_k =\eps(x) |Du(x)v_1\wedge\cdots\wedge Du(x)v_k|\,\Xi(x), \qquad\mbox{where }\xi(x)=v_1\wedge\cdots\wedge v_k. 
\]
Using the above formula to express $\MM_h(u\pf T)$ and using the change of variable $y=u(x)$, the subadditivity and lower semicontinuity of $h$ lead to,
\be\label{estimPFLip}
\MM_h(u\pf T)\, \leq\, \int_{M} |Du|(x)^k h(\theta(x))\, d\H^k(x)\, \leq\, \|Du\|_{L^\oo(M)}^k \MM_h(T). 
\ee
Let $z:[0,1]\times \R^n\to \R^n $ be a proper Lipschitz mapping, then for $T\in \RR_k(\R^n)$, we have the homotopy formula (see~\cite[4.1.9]{federer},~\cite[2.2.3. Prop. 4]{GMS1} or \cite[7.4.3]{KP2008}),
\be
\label{homform}
z(1,\cdot) \pf T - z(0,\cdot)\pf  T= \pt\lt[ z\pf(\llb(0,1)\rrb\times T) \rt]+z\pf(\llb(0,1)\rrb\times \pt T). 
\ee
This formula is the basis of the deformation method (with $z(0,\cdot)=\Id$, $z(1,\cdot)=u$).

\subsubsection*{Convention} In the sequel $C$ denotes a non negative constant that may only depend on the ambient dimension $n$ and that may change from line to line.\\
When $Q$ is a $k$-cube with side length $\ell(Q)$ and $\lambda>0$, we note $\lambda Q$ the dilated $k$-cube with same orientation and same center as $Q$ but with side length $\lambda\ell(Q)$.
%When $Q$ is a $j$-dimensional cube lying in an affine subspace $L$, we denote by $\ov Q$ the closure of $Q$ relative to $L$.

\section{A local deformation lemma}
\label{S3}
The building block of our proof is the local deformation lemma, Lemma~\ref{lem1} below. Let us first introduce some notation.
Let $\delta>0$, $x\in\R^n$ and $(e_1,\cdots,e_n)$ be an orthonormal basis of $\R^n$. Let $Q_0=x_0+\{\sum t_i e_i,  0<t_i<\delta\}$ be an open cube with side length $\delta>0$. We introduce the collection of its translates:
\[
\Q^{(n)}:=\lt\{\lt(\sum  a_i e_i\rt)+ Q_0  :a=(a_1,\cdots,a_n) \in \delta \Z^n \rt\}.
\] 
For $0\leq j\leq n$, we also note $\Q^{(j)}$ the set of  \emph{relatively open} $j$-faces of the cubes of $\Q^{(n)}$. For instance:
\begin{itemize}
\item $\Q^{(0)}$ is the set of vertices 
$\{x_0+\delta \sum l_i e_i : (l_1,\cdots,l_n)\in \Z^n\}$;
\item $\Q^{(1)}$ is the set of open segments $(y,z)$ with $y,z\in \Q^{(0)}$ and $|y-z|=\delta$;
\item$\Q^{(2)}$ is the set of squares $I\times J$ with $I,J\in \Q^{(1)}$ and $\ov I\cap \ov J=\{y\}$ for some $y\in \Q^{(0)}$;
\item and so on \dots
\end{itemize}
By construction, 
\[
\Q:=\Q^{(0)}\cup\cdots\cup\Q^{(n)}
\]
form a partition of $\R^n$. For $Q\in \Q^{(k)}$, $k\in\{0,\cdots,n\}$, we introduce the closed set
\[%\be\label{SigmaQ}
\Sigma_Q:=\ov Q\,\cup\,\bigcup \lt\{\ov L : L\in \Q^{(n)},\,  Q\not\subset\pt L=\void \rt\}
\]%\ee
and its open complement 
\[
\om_Q := \R^n\bks \Sigma_Q=\lt\{M\in\Q: Q\subset\pt M \rt\}.
\]
Notice that for $Q\in \Q^{(n)}$, $\om_Q=\void$, $\Sigma_Q=\R^n$. For later use, we notice that
\be
\label{caractomQ}
\om_Q\, =\, \cup \{L\in \Q^{(j)}:k<j\leq n,\,Q\subset\pt L \}.
\ee
 Some examples of sets $\om_Q$ and $\Sigma_Q$ are illustrated in Figures~\ref{fig1},~\ref{fig2},~\ref{fig3} in the ambient spaces $\R$, $\R^2$ and $\R^3$.

\begin{figure}[H]
\centering
\begin{tikzpicture}
\fill[ultra thick, color=gray] (-1,0) node{$|$} -- (0,0) node{$|$}  -- (4,0) node{$|$} -- (5,0) node{$|$};
\draw[very thick,dashed, color=RedOrange] (-1.5,0) --(1,0) (3,0)-- (5.5,0)node[anchor=south west,color=black]{$\R$};
\draw[very thick, color=Cerulean](1,0)--(3,0);
\draw[very thick, color=RedOrange] (-.5,0) --(1,0) node{$]$} (2,0)node{$|$} (3,0)node{$[$} -- (4.5,0);
\filldraw[RedOrange] (2,0) circle (1.5pt) node[anchor=south east] {$Q$};
\draw (3,0)node[anchor=north east] {$\omega_Q$};
\draw (4.5,0)node[anchor=north west] {$\Sigma_Q$};
\end{tikzpicture}
\caption{In the ambient space $\R$ we represent the open set $\omega_Q$ in blue and  its closed complementary $\Sigma_Q$ in orange for some $Q\in \Q^{(0)}$}\label{fig1}
\end{figure}
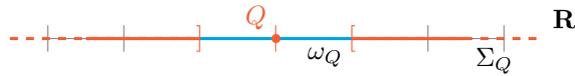

\begin{figure}
\centering
\begin{tikzpicture}
\fill[fill=white] (-.5,-.5) rectangle (4.5,4.5);
\fill[pattern=north east lines, pattern color=white!40!RedOrange] (-.4,1.6) rectangle (4.4,3.4);
\filldraw[fill=white, draw=black] (1,2) rectangle (3,3);
\draw[very thin,dashed,color=gray] (-.5,1.5) grid (4.5,3.5);
\fill[pattern=north west lines, pattern color=Cerulean] (1,2) rectangle (3,3);
\draw[very thin,dashed,color=gray] (-.5,1.5) grid (4.5,3.5);
\filldraw[RedOrange, very thick] (2,2) -- (2,2.5) node[anchor= east] {$Q$} -- (2,3);
\draw (3,2)node[anchor=south east] {$\omega_Q$};
\draw (4,1.6)node[anchor=south east] {$\Sigma_Q$};
\draw (5,4)node[anchor=south west] {$\R^2$};
\end{tikzpicture}
\qquad
\begin{tikzpicture}
\fill[pattern=north east lines, pattern color=white!40!RedOrange] (-.4,-.4) rectangle (4.4,4.4);
\filldraw[fill=white, draw=black] (1,1) rectangle (3,3);
\draw[very thin,dashed,color=gray] (-.5,-.5) grid (4.5,4.5);
\fill[pattern=north west lines, pattern color=Cerulean] (1,1) rectangle (3,3);
\filldraw[RedOrange] (2,2) circle (1.5pt) node[anchor=south east] {$Q$};
\draw (3,1)node[anchor=south east] {$\omega_Q$};
\draw (4,0)node[anchor=south east] {$\Sigma_Q$};
\end{tikzpicture}
\caption{In $\R^2$ we draw the set $\omega_Q$, in striped blue, and $\Sigma_Q$, in striped orange, on the left in the case $Q\in \Q^{(1)}$ (also in orange as it belongs to $\Sigma_Q$); on the right, with same color codes, the sets associated with $Q\in \Q^{(0)}$.}\label{fig2}
\end{figure}
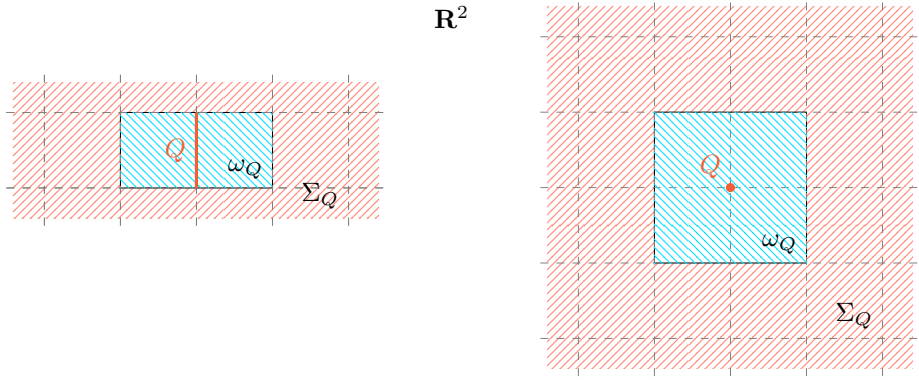

%\clearpage
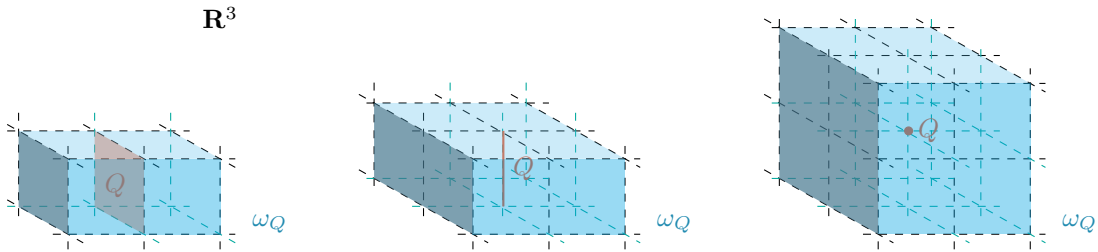
\begin{figure}[H]
\centering
\begin{tikzpicture}
\draw[ dashed,color=black,xshift=2cm, yshift=-1.14cm,xstep=1cm] (-.2,-.2) grid (2.2,1.2);
\draw [dashed,color=black,domain=1.2:2.2] plot ({\x 	  	},       	{-.57*\x + 1});
\draw [dashed,color=black,domain=1.2:2.2] plot ({\x 	  	},       	{-.57*\x 	});
\draw [dashed,color=black,domain=.5:1.9] plot ({1.35 	 },       	{\x-1.4});
\draw [dashed,color=black,domain=.5:2.9 ] plot ({\x	+.7 	},       	{.23});
\draw [dashed,color=black,domain=1.2:2.2] plot ({\x 	+1  	},       	{-.57*\x + 1});
\draw [dashed,color=black,domain=1.2:2.2] plot ({\x 	+2  	},       	{-.57*\x + 1});

%% Interior
\draw [dashed,color=Emerald,domain=.5:2.9 ] plot ({\x	+.7 	},       	{-.77});
\draw [dashed,color=Emerald,domain=1.2:2.2] plot ({\x 	 +1	},       	{-.57*\x });
\draw [dashed,color=Emerald,domain=1.2:2.2] plot ({\x 	+2  	},       	{-.57*\x });
\draw [dashed,color=Emerald,domain=.5:1.9] plot ({3.35 	 	},       	{\x-1.4});
\draw [dashed,color=Emerald,domain=.5:1.9] plot ({2.35 	 	},       	{\x-1.4});
\fill[opacity=.5,color=white!10!RedOrange] (2.35,.23)  -- (3,-.14)  -- (3,-1.14) -- (2.35,-.78)-- cycle; % side orange
\draw[RedOrange] (2.625,-.5)node{$Q$};
%% Faces 
\fill[opacity=.4,color=Cerulean] (2,-1.14)  -- (4,-1.14) -- (4,-.14) -- (2,-.14) -- cycle; % front
\fill[opacity=.7,color=black!50!Cerulean] (1.35,.23)  -- (2,-.14)  -- (2,-1.14) -- (1.35,-.78)-- cycle; % side 
\fill[opacity=.3,color=white!40!Cerulean] (1.35,.23)  -- (3.35,.23)  -- (4,-.14)  -- (2,-.14)-- cycle; % top
\draw (4.3,-1)  node[anchor= west] {\color{Cerulean!70!black}{$\omega_{Q}$}}; % Point
\filldraw[black] (4,2)  node[anchor=north] {$\R^3$};

\end{tikzpicture}
\qquad
\begin{tikzpicture}
%% Exterior grid
\draw[ dashed,color=black,xshift=2cm, yshift=-1.14cm,xstep=1cm] (-.2,-.2) grid (2.2,1.2);
\draw [dashed,color=black,domain=.5:2.2] plot ({\x 	  	},       	{-.57*\x + 1});
\draw [dashed,color=black,domain=.5:2.2] plot ({\x 	  	},       	{-.57*\x 	});
\draw [dashed,color=black,domain=.5:1.9] plot ({0.7 	 	},       	{\x-1});
\draw [dashed,color=black,domain=.5:1.9] plot ({1.35 	 },       	{\x-1.4});
\draw [dashed,color=black,domain=.5:2.9 ] plot ({\x	+.7 	},       	{.23});
\draw [dashed,color=black,domain=.5:2.2] plot ({\x 	+1  	},       	{-.57*\x + 1});
\draw [dashed,color=black,domain=.5:2.2] plot ({\x 	+2  	},       	{-.57*\x + 1});
\draw [dashed,color=black,domain=.5:2.9 ] plot ({\x	+.1 	},       	{.60});

%% Interior
\draw [dashed,color=Emerald,domain=.5:2.9 ] plot ({\x	+.7 	},       	{-.77});
\draw [dashed,color=Emerald,domain=.5:2.2] plot ({\x 	 +1	},       	{-.57*\x });
\draw [dashed,color=Emerald,domain=.5:2.2] plot ({\x 	+2  	},       	{-.57*\x });
\draw [dashed,color=Emerald,domain=.5:1.9] plot ({1.7 	 	},       	{\x-1});
\draw [dashed,color=Emerald,domain=.5:1.9] plot ({2.7 	 	},       	{\x-1});
\draw [dashed,color=Emerald,domain=.5:2.9 ] plot ({\x	+.1 	},       	{-.40});
\draw [dashed,color=Emerald,domain=.5:1.9] plot ({3.35 	 	},       	{\x-1.4});
\draw [dashed,color=Emerald,domain=.5:1.9] plot ({2.39 	 	},       	{\x-1.4});
\filldraw[thick,RedOrange] (2.4,-.77) -- (2.4,-.27)  node[anchor=west] {$Q$}--(2.4,.23); % Point

%% Faces 
\fill[opacity=.4,color=Cerulean] (2,-1.14)  -- (4,-1.14) -- (4,-.14) -- (2,-.14) -- cycle; % front
\fill[opacity=.7,color=black!50!Cerulean] (.7,.601)  -- (2,-.14)  -- (2,-1.14) -- (.7,-0.399)-- cycle; % side 
\fill[opacity=.3,color=white!40!Cerulean] (.7,.601)  -- (2.7,.601)  -- (4,-.14)  -- (2,-.14)-- cycle; % top
\draw (4.3,-1)  node[anchor= west] {\color{Cerulean!70!black}{$\omega_{Q}$}}; % Point
\end{tikzpicture}
\qquad
\begin{tikzpicture}
%% Exterior grid
\draw[ dashed,color=black,xshift=2cm, yshift=-1.14cm,xstep=1cm] (-.2,-.2) grid (2.2,2.2);
\draw [dashed,color=black,domain=.5:2.2] plot ({\x 	 	},       	{-.57*\x + 2});
\draw [dashed,color=black,domain=.5:2.2] plot ({\x     +   1	},       	{-.57*\x + 2});
\draw [dashed,color=black,domain=.5:2.2] plot ({\x     +   2 	},       	{-.57*\x + 2});
\draw [dashed,color=black,domain=.5:2.2] plot ({\x 	  	},       	{-.57*\x + 1});
\draw [dashed,color=black,domain=.5:2.2] plot ({\x 	  	},       	{-.57*\x 	});
\draw [dashed,color=black,domain=.5:2.8] plot ({0.7 	 	},       	{\x-1});
\draw [dashed,color=black,domain=.5:2.8] plot ({1.35 	 	},       	{\x-1.4});
\draw [dashed,color=black,domain=.5:2.9 ] plot ({\x	+.1 	},       	{1.60});
\draw [dashed,color=black,domain=.5:2.9] plot ({\x 	+.7 	},       	{1.23});

%% Interior
\draw [dashed,color=Emerald,domain=.5:2.9 ] plot ({\x	+.7 	},       	{.23});
\draw [dashed,color=Emerald,domain=.5:2.2] plot ({\x 	+1  	},       	{-.57*\x + 1});
\draw [dashed,color=Emerald,domain=.5:2.9 ] plot ({\x	+.7 	},       	{-.77});
\draw [dashed,color=Emerald,domain=.5:2.2] plot ({\x 	 +1	},       	{-.57*\x });
\draw [dashed,color=Emerald,domain=.5:2.2] plot ({\x 	+2  	},       	{-.57*\x + 1});
\draw [dashed,color=Emerald,domain=.5:2.2] plot ({\x 	+2  	},       	{-.57*\x });
\draw [dashed,color=Emerald,domain=.5:2.8] plot ({1.7 	 	},       	{\x-1});
\draw [dashed,color=Emerald,domain=.5:2.8] plot ({2.7 	 	},       	{\x-1});
\draw [dashed,color=Emerald,domain=.5:2.9 ] plot ({\x	+.1 	},       	{.60});
\draw [dashed,color=Emerald,domain=.5:2.9 ] plot ({\x	+.1 	},       	{-.40});
\draw [dashed,color=Emerald,domain=.5:2.8] plot ({3.35 	 	},       	{\x-1.4});
\draw [dashed,color=Emerald,domain=.5:2.8] plot ({2.39 	 	},       	{\x-1.4});
\filldraw[RedOrange] (2.4,.23) circle (1.5pt) node[anchor=west] {$Q$}; % Point
%% Faces 
\fill[opacity=.4,color=Cerulean] (2,-1.14)  -- (4,-1.14) -- (4,0.86) -- (2,0.86) -- cycle; % front
\fill[opacity=.7,color=black!50!Cerulean] (.7,1.601)  -- (2,0.86)  -- (2,-1.14) -- (.7,-0.399)-- cycle; % side 
\fill[opacity=.3,color=white!40!Cerulean] (.7,1.601)  -- (2.7,1.601)  -- (4,0.86)  -- (2,0.86)-- cycle; % top
\draw(4.3,-1)  node[anchor= west] {\color{Cerulean!70!black}{$\omega_{Q}$}}; % Point
\end{tikzpicture}\caption{In $\R^3$ we draw the set $\omega_Q$ associated with $Q$ in orange. We consider $Q\in \Q^{(2)}$ on the left, $Q\in \Q^{(1)}$ in the center and $Q\in \Q^{(0)}$ on the right.}\label{fig3}
\end{figure}

\begin{lemma}
\label{lem1}
Let $T\in \RR_k(\R^n)$ such that $\pt T\in \RR_{k-1}(\R^n)$ and $\MM(T)+\MM(\pt T)<\oo$. Let $Q\in Q^{(j)}$ for some $j\in \{k+1,\cdots,n\}$ and assume moreover that
\[
\supp T \subset \Sigma_Q.
\]
Then there exists ${\tilde T}\in \RR_k(\R^n)$ with $\pt {\tilde T}\in \RR_{k-1}(\R^n)$, there exist $U\in \RR_k(\R^n)$, $V\in \RR_{k+1}(\R^n)$ such that 
\be
\label{Id+supports}
T={\tilde T}+ U +\pt V, \qquad \supp U\, \cup\, \supp V\, \subset\, \ov Q,\qquad \supp {\tilde T}\subset \Sigma_Q\bks Q. 
\ee
Moreover, for any $\delta>0$, $\tilde T, \, U,\, V$ can be chosen in order to satisfy,
\begin{gather}
\MM_h({\tilde T}-T)\,  \leq\, c\, \MM_h(T\restr Q),\qquad\qquad  \quad\MM_h(V) \, \leq\, c\, \delta\, \MM_h(T\restr Q),\label{estimM1}\\
\MM_h(\pt {\tilde T}-\pt T)\, \leq\, c\, \MM_h(\pt T\restr Q),\qquad\qquad  \MM_h(U) \, \leq\, c\, \delta\,  \MM_h(\pt T\restr Q).\label{estimM2}
\end{gather}
where $c=c(n)>0$ is a constant. In addition, 
\begin{align}
&\mbox{if $\pt T\restr  Q$ is a polyhedral current then $U$ is a polyhedral current,}\label{ptTrestQ} \\
&\mbox{if  $T\restr  Q$ is a polyhedral current, so is $V$.}\label{TrestQ}
\end{align}
In the sequel, when applying the lemma, we choose $\tilde T$ satisfying the conclusions of the lemma and we note 
\be\label{PiQ}
\Pi_Q(T):=\tilde T.
\ee
\end{lemma}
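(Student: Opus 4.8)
The plan is to build the deformation as an explicit radial (or ``conical'') push-forward inside the cube $\ov Q$, which collapses the portion of $T$ sitting in $Q$ onto the boundary complex $\Sigma_Q\setminus Q$, while leaving $T$ untouched outside $\ov Q$. Since $Q$ is a $j$-face with $j\geq k+1$, a generic point $p$ in the \emph{relative interior} of $Q$ does not belong to $\supp T$ (the support being contained in $\Sigma_Q$, whose intersection with $Q$ has dimension $\leq j-1$, but more to the point we only need $p\notin\supp T$, which holds for a.e.\ $p\in Q$ since $\MM(T\restr Q)<\oo$ forces $\H^j$-a.e.\ point of the $j$-dimensional set $Q$ to lie outside the $k$-rectifiable carrier of $T$). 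From such a center $p$ we define, on the closed slab around $Q$, a Lipschitz homotopy $z:[0,1]\times\R^n\to\R^n$ with $z(0,\cdot)=\Id$ and $z(1,\cdot)$ the retraction that pushes $\ov Q\setminus\{p\}$ radially (from $p$) onto $\pt Q$ and then, composed with the standard Federer--Fleming cubical retraction on the higher-dimensional cells of $\om_Q$, onto $\Sigma_Q\setminus Q$; outside a neighborhood of $\ov Q$ we take $z(t,\cdot)=\Id$. I would glue these so that $z$ is globally Lipschitz and proper. Then $\tilde T := z(1,\cdot)\pf T$, and the homotopy formula~\eqref{homform} gives
\[
T-\tilde T = z(0,\cdot)\pf T - z(1,\cdot)\pf T = -\pt\!\lt[z\pf(\llb(0,1)\rrb\times T)\rt] - z\pf(\llb(0,1)\rrb\times \pt T),
\]
so we set $V:=-z\pf(\llb(0,1)\rrb\times T)$ and $U:=-z\pf(\llb(0,1)\rrb\times \pt T)$, and we get $T=\tilde T+U+\pt V$ with all supports inside $\ov Q$ by construction, since $z$ is the identity off $\ov Q$ and hence $z\pf(\llb(0,1)\rrb\times T)$ is supported where the homotopy actually moves things, namely $\ov Q$. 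The inclusion $\supp\tilde T\subset\Sigma_Q\setminus Q$ follows because $z(1,\cdot)$ maps everything off $Q$, and $\supp U\cup\supp V\subset\ov Q$ as just noted.

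\textbf{The estimates.} For the mass bounds~\eqref{estimM1}--\eqref{estimM2} the point is that $z$ is the identity except on $T\restr Q$ (resp.\ $\pt T\restr Q$): more precisely $z\pf(\llb(0,1)\rrb\times T)=z\pf(\llb(0,1)\rrb\times (T\restr \ov Q))$ and the part of $T$ outside $\ov Q$ is not moved, so $\tilde T-T$ and $V$ only involve $T\restr \ov Q$, and up to the $\H^k$-null difference between $\ov Q$ and $Q$ this is $T\restr Q$. The radial retraction from $p$ has Lipschitz constant comparable to $\diam Q/\dist(p,\pt Q)$; choosing $p$ in a fixed fraction of the points of $Q$ (a positive-measure ``good'' set) we can guarantee $\dist(p,\pt Q)\geq c_n\,\ell(Q)$ while still avoiding $\supp T$, hence a dimensional Lipschitz bound. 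Then~\eqref{estimPFLip} applied to $z(1,\cdot)$ gives $\MM_h(\tilde T)\leq C\,\MM_h(T\restr \ov Q)$ and hence $\MM_h(\tilde T-T)\leq \MM_h(\tilde T\restr(\Sigma_Q\setminus Q)) + \MM_h(T\restr \ov Q)\leq C\,\MM_h(T\restr Q)$ by subadditivity (and similarly for $\pt\tilde T - \pt T = \widetilde{\pt T}-\pt T$). For $V$ and $U$ one applies the product/pushforward estimate to the $(k+1)$-current $z\pf(\llb(0,1)\rrb\times T)$: the homotopy $z$ can be arranged to have total ``travel distance'' $\lesssim\diam Q\lesssim\delta$ inside $\ov Q$, which is exactly the extra factor $\delta$ that appears; rescaling the time variable or composing with a dilation makes the constant dimensional, so $\MM_h(V)\leq C\,\delta\,\MM_h(T\restr Q)$ and $\MM_h(U)\leq C\,\delta\,\MM_h(\pt T\restr Q)$. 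The $h$-mass version of these pushforward inequalities is already available in~\eqref{estimPFLip}, with the key being that $h$ is subadditive and l.s.c.\ so that the multiplicity-summing in the pushforward formula can only decrease $\MM_h$ (after accounting for the Jacobian factor $|Dz|^k$).

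\textbf{Polyhedrality.} For~\eqref{ptTrestQ}--\eqref{TrestQ}: if $T\restr Q$ is polyhedral, then $z(1,\cdot)$ and $z$ being piecewise linear on the simplicial decomposition of the cone over $\pt Q$ with apex $p$ (choose $p$ rational, or just choose $z$ affine on cones), $V=-z\pf(\llb(0,1)\rrb\times (T\restr \ov Q))$ is a Lipschitz pushforward of a polyhedral current under a piecewise-affine map, hence polyhedral; alternatively invoke the constancy Lemma~\ref{lemConstancy} since $\supp V\subset\ov Q$, $V$ has dimension $k+1\le n$, and $\supp\pt V$ lies in a finite polyhedral complex (the faces of $Q$ and the traces of $\supp(T\restr Q)$). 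The same argument with $\pt T$ in place of $T$ and dimension dropped by one gives that $U$ is polyhedral when $\pt T\restr Q$ is.

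\textbf{Main obstacle.} The delicate point is making the homotopy $z$ simultaneously (a) globally Lipschitz and proper after gluing the radial retraction near $Q$ with the cubical Federer--Fleming retractions on the higher cells of $\om_Q$ and with the identity far away, (b) with dimensionally controlled Lipschitz constant and travel distance, and (c) mapping $\ov Q$ exactly into $\Sigma_Q\setminus Q$ (not merely into $\Sigma_Q$) so that $\tilde T$ genuinely loses its $Q$-part. Ensuring (c) is where the restriction $j\ge k+1$ is used: a $k$-current can be retracted off a $j$-cell precisely when $j>k$, by pushing from a generic interior point. Handling the ``seam'' between $\ov Q$ and the rest — so that the pieces of $T$ straddling $\pt Q$ are not torn and produce spurious boundary — requires taking $z(t,x)=x$ for $x\notin\ov Q$ and checking that the radial map already fixes $\pt Q$ pointwise, which it does; the finite-mass hypotheses on $T$ and $\pt T$ guarantee that the good center $p$ (avoiding $\supp T$, $\supp\pt T$, and any thin exceptional set) can be chosen, and that all the sliced currents appearing are rectifiable with finite mass.
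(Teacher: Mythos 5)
Your basic strategy (radial projection from a well-chosen interior center $p$, then the homotopy formula with $z(t,y)=(1-t)y+tu(y)$, identifying $\tilde T=u\pf T$, $V=-z\pf(\llb(0,1)\rrb\times T)$, $U=-z\pf(\llb(0,1)\rrb\times\pt T)$, and the constancy argument for polyhedrality) is indeed the one the paper uses. However there is a genuine gap in the step where you derive the mass estimates, and it is precisely the point where the real work lies.

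You assert that ``choosing $p$ in a fixed fraction of the points of $Q$ $\ldots$ we can guarantee $\dist(p,\pt Q)\ge c_n\,\ell(Q)$ while still avoiding $\supp T$, hence a dimensional Lipschitz bound.'' This is false. The radial retraction from $p$ has pointwise Jacobian of size $\sim\ell(Q)/|y-p|$, which is unbounded as $y\to p$, so the map is not globally Lipschitz; and even restricting to $\supp T$, the Lipschitz constant is of order $\ell(Q)/d(p,\supp T)$, which cannot be made dimensional because $\supp T\cap Q$ may be dense (a $k$-rectifiable current can very well have full support in $\ov Q$). Being far from $\pt Q$ helps with the opposite degeneracy (near the boundary) but does nothing near $p$. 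Thus the route ``avoid $\supp T$, get a Lipschitz bound, apply~\eqref{estimPFLip}'' does not yield a constant $c=c(n)$. What the paper does instead is to bound the \emph{integral} $\int_Q |Du|^k h(\theta)\,d\H^k$ directly rather than the Lipschitz constant: it introduces the weighted integrals $I_h(a)=\int_{Q\cap M}|y-a|^{-k}h(\theta(y))\,d\H^k(y)$ and $J_h(a)=\int_{Q\cap M'}|y-a|^{-(k-1)}h(\theta'(y))\,d\H^{k-1}(y)$, averages them over $a\in\frac12 Q$ using Fubini (the inner integral $\int_{\R^j\cap B}|z|^{-k}\,d\H^j$ converges precisely because $j>k$), and selects a good $a$ by Markov's inequality so that $I_h(a)\le C\,\MM_h(T\restr Q)$ and $J_h(a)\le C\,\MM_h(\pt T\restr Q)$. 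Because the radial projection from $a$ is still not Lipschitz, the paper further replaces it by a Lipschitz regularization $u_\eps$ (linear interpolation with the identity inside $B_\eps(a)$), obtains the estimates uniformly in $\eps$, checks a Cauchy property in $\MM$-distance, and passes to the limit $\eps\dw 0$. None of this appears in your proposal, and without it the constant in~\eqref{estimM1}--\eqref{estimM2} is not dimensional. A secondary (but related) slip: your mention of ``the cubical Federer--Fleming retraction on the higher-dimensional cells of $\om_Q$'' is unnecessary and somewhat confused, since by hypothesis $\supp T\subset\Sigma_Q$, i.e.\ $T$ carries no mass in $\om_Q$; the deformation acts only on $T\restr Q$ and the map is the identity on $\Sigma_Q\setminus Q$. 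Finally, the polyhedrality of $V$ in the paper is derived by first showing $a\notin\supp T$ (since otherwise $I_h(a)=\oo$, contradicting the choice of $a$), so that $u_\eps$ stabilizes on $\supp T$ for small $\eps$ and $V$ is an explicit union of convex hulls $S\cup u(S)$; this detail is also tied to the $I_h(a)$ selection you are missing.
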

The lemma and its proof follow the same lines as the deformation theorem of Federer and Fleming~\cite{FedFlem60} --- see~\cite{federer,KP2008}. However, there are two specific aspects in the present approach:
\begin{itemize}
\item The first lies in the presentation:  in the proof  of the original result, the authors project first $T\restr \ov Q$ on $\pt Q$ for every $Q\in\Q^{(n)}$, then they project the resulting current on $\pt Q$ for every $Q \in \Q^{(n-1)}$ and so forth, for $j=n,n-1,\cdots,k+1$. Here, we highlight the elementar operation of deforming the current in a single face $\ov Q$. This allows us to apply the deformation locally (in $\cup \ov Q$ where $Q$ ranges over a finite subset of $\Q^{(n)}$) and get some flexibility: we can use different grids in different regions. We could have obtained this flexibility by extending the local grids to a uniformly regular mesh defined in the whole space. Such delicate construction has been performed in~\cite{Feuvrier2012}.
\item In the original paper, the consecutive deformations are made of central projections of $T\restr Q$ from the center of $Q$ onto $\pt Q$. If the density of $\|T\|$ near the center is large, the projection may increase dramatically the mass of the current. To fix this, the original method is to translate the grid (the projection behaves well in average). Here, we insist in projecting on $\pt Q$ and not on one of its translates because the $k$-skeleton of $\pt Q$ contains a substantial part of the $h$-mass of $T$ that we cannot afford to increase in the deformation process. Instead of translating the grids, we move the center of projection in $\frac12Q$ to find a projection of $\|T\|$ on $\pt Q$ with good estimates. This is the method of {\it e.g.}~\cite[Sec. 5.1.1]{GMS1}.
\end{itemize}

\subsection{Proof of Lemma~\ref{lem1}}
Let $T\in \RR_k(\R^n)$, $j\in\{k+1,\cdots,n\}$ and $Q\in \Q^{(j)}$
satisfying the assumptions of Lemma~\ref{lem1}. Using a dilation and an affine isometry, we assume $\delta=1$, that $(e_1,\cdots,e_n)$ is the canonical basis of $\R^n$ and that $Q$ is centered at $0$.\medskip

\noindent
{\it Step 1.} Let us first select a good point for the projection of $T\restr Q$ and $\pt T\restr Q$ on $\pt Q$. We note $T=\tau(M,\theta,\xi)\in \RR_k(\R^n)$, $\pt T=\tau(M',\theta',\xi')\in \RR_{k-1}(\R^n)$.  For $a\in \frac12Q$ we consider the integrals
\[
I_h(a):=\int_{Q\cap M} \dfrac1{|y-a|^k} h(\theta(y))\, d\H^k(y), \qquad J_h(a):=\int_{Q\cap M'} \dfrac1{|y-a|^{k-1}}  h(\theta'(y))\, d\H^k(y).
\]
Integrating over $a\in \frac12Q$ and using Fubini, we compute
\[
\int_{\frac12Q} I_h(a)\, d\H^j(a)\, =\, \int_{Q\cap M} \lt( \int_{\frac12Q} \dfrac1{|y-a|^k}\,d\H^j(a) \rt) h(\theta(y))\, d\H^k(y).
\]
Using the change of variable $z=y-a$ in the inner integral and the fact that $y-\frac12Q\subset B_{2\sqrt{j}}$ for $y\in Q$, we obtain
\[
\int_{\frac12Q} I_h(a)\, d\H^j(a)\
\leq\, \lt(\int_{\R^j \cap B_{2\sqrt{j}}} \dfrac1{|z|^k}\,d\H^j(z) \rt)\int_{Q\cap M} h\circ \theta\, d\H^k\,
=\, \lt(\int_{\R^j \cap B_{2\sqrt{j}}} \dfrac1{|z|^k}\,d\H^j(z) \rt)\MM_h(T\restr Q).%\, = \, C(k,j)   \int_{Q\cap M} h(\theta(y))\, d\H^k(y),
\]
Since $k<j\leq n$, the first integral in the right hand side is finite and bounded by some constant only depending on $n$. We then have 
\[
 \int_{\frac12Q} I_h(a)\, d\H^j(a)\ \leq C\, \MM_h(T\restr Q).
\]
Similarly, 
\[
 \int_{\frac12Q} J_h(a)\, d\H^j(a)\ \leq C\, \MM_h(\pt T\restr Q).
\]
By Markov inequality, we deduce that there exists $a\in \frac12Q$ and a constant only depending on $n$ such that 
\be
\label{controlIa1}
I_h(a)\leq C\, \MM_h(T\restr Q),\qquad J_h(a)\,\ \leq C\, \MM_h(\pt T\restr Q).
\ee
\noindent
{\it Step 2.} We introduce a family of proper Lipschitz mappings $u_\eps :\R^n\to\R^n$. First for $y\in Q\bks \{a\}$ we define $u(y)$ as the radial projection of $y$ on $\pt Q$ with respect to $a$. Next, for $\eps\in(0,1/2)$ and $y\in \Sigma_Q$ we define, 
\[
u_\eps(y):=\lt\{\begin{array}{cl}
({|y-a|}/\eps) u(y)+\lt(1-{|y-a|}/\eps\rt) y &\mbox{ if } y \in Q\cap B_{\eps}(a), \\
u(y) &\mbox{ if } y \in Q\bks B_{\eps}(a), \\
y & \mbox{ if } y \in \Sigma_Q\bks Q.
\end{array}\rt.
\]
The mapping $u_\eps$ is well defined and Lipschitz on $\Sigma_Q$. We extend it on $\om_Q$ to obtain a Lipschitz mapping on $\R^n$, still noted $u_\eps$. Notice that we have $u_\eps(y)\to u(y)$ as $\eps\dw0$ locally uniformly in $\ov Q\bks \{a\}$.

Next, we define $z_\eps:[0,1]\times \R^n\to \R^n$ as $z_\eps(t,y) = (1-t) y + t u_\eps(y)$. The homotopy formula~\eqref{homform} leads to  
\be\label{homformeps}
\tilde T_\eps - T= -\pt V_\eps - U_\eps,
\ee
with $T_\eps, U_\eps\in \RR_k(\R^n)$ and $V_\eps\in \RR_{k+1}(\R^n)$ defined as 
\[
\tilde T_\eps :=u_\eps \pf T,\qquad V_\eps:=- z_\eps\pf\lt(\llb(0,1)\rrb\times T\rt) ,\qquad U_\eps:=- z_\eps\pf \lt(\llb(0,1)\rrb\times \pt T\rt). 
\]
We notice that $z_\eps(t,y)$ does not depend on $t$ on $[0,1]\times (\Sigma_Q \bks Q)$, so
\[
z_\eps\pf \lt(\llb(0,1)\rrb \times T\!\restr  (\Sigma_Q \bks Q)\rt)=0,\qquad z_\eps\pf \lt(\llb(0,1)\rrb \times \pt T \!\restr (\Sigma_Q \bks Q)\rt)=0,
\] 
and since by assumption $\supp T\subset \Sigma_Q$, we can write
\[
V_\eps:=- z_\eps\pf(\llb(0,1)\rrb\times (T\restr Q)) ,\qquad U_\eps:=- z_\eps\pf(\llb(0,1)\rrb\times (\pt T\restr Q)). 
\]
Similarly, since $u_\eps\equiv \Id$ on $\Sigma_Q\bks Q$, we also have,
\[
\tilde T_\eps - T=u_\eps \pf (T\restr Q) - T\restr Q. 
\]
{\it Step 3.} We wish to send $\eps$ towards 0 in~\eqref{homformeps}. For this we notice that for $0<\eps'<\eps<1/2$ and $y\in Q$, we have $|Du_\eps(y)|\leq C/|y-a|$ and $\supp (u_\eps-u_\eps')\subset \ov B_{\eps}(a)$. By~\eqref{estimPFLip}, we deduce 
\be\label{MMwueps}
\MM_h(u_\eps \pf T\restr Q)\, \leq \, C I_h(a)\stackrel{\eqref{controlIa1}}\leq C \MM_h(T\restr Q).
\ee
Moreover, 
\[
\MM_h(u_\eps \pf T\restr Q-u_{\eps'} \pf T\restr Q)\,\leq\, C \int_{Q\cap M\cap B_{\max(\eps,\eps')}(a)} \dfrac1{|y-a|^k} h(\theta(y))\, d\H^k(y)\ \stackrel{\eps',\eps\dw0}\longto \ 0
\]
Since $\MM\leq (1/\a)\MM_h$ (recall~\eqref{condf2}), we see that the family $\{\tilde T_\eps\}$ has the Cauchy property for the $\MM$-distance. Passing to the limit we have $\tilde T_\eps\to \tilde T$ as $\eps\dw 0$ and moreover,~\eqref{MMwueps} yields the first part of~\eqref{estimM1}. Similarly, we deduce from~\eqref{controlIa1} that $U_\eps$ and $V_\eps$ have limits noted $U$ and $V$ as $\eps\dw0$ that satisfy~\eqref{estimM1}\eqref{estimM2}. Passing to the limit in~\eqref{homformeps} we have the desired  decomposition $T=\tilde T + U +\pt V$ and from the properties of the support of $\tilde T_\eps, U_\eps$ and $V_\eps$ we have $\supp ({\tilde T}-T)\,\cup\, \supp U\, \cup\, \supp V\, \subset\, \ov Q$. We also have to check the last inclusion of~\eqref{Id+supports}. From the definition of $u_\eps$, we have 
\[
\MM_h(\tilde T_\eps \restr Q) =\MM_h(u_\eps\pf [T\restr B_\eps(a)])\, \leq\, C  \int_{Q\cap M\cap B_{\eps}(a)} \dfrac1{|y-a|^k} |h(\theta(y))|\, d\H^k(y)\ \stackrel{\eps\dw0}\longto \ 0.
\]
We deduce that $\MM_h(\tilde T\restr Q)=0$ and since $\supp \tilde T\subset \Sigma_T$, we conclude that $\supp \tilde T\subset \Sigma_T\bks Q$. This proves~\eqref{Id+supports}.  \medskip 

\noindent
{\it Step 4.} Eventually, let us assume that $T\restr Q$ is a polyhedral current. If $a\in \supp T$, then there exist constants $c,\eta>0$ and a non empty open polyhedral cone $\mc{C}$ with vertex $a$ and dimension $k$ such that $\|T\|\geq c\H^k\restr {\mc{C}\cap B_{\eta}(a)}$. This implies $I_h(a)=+\oo$ and contradicts our choice for $a$. Hence $d(a,\supp T)>0$ and for $0<\eps<d(a,\supp T)$, $z_\eps$ does not depend on $\eps$ on $[0,1]\times \supp T$. For such $\eps$, we have,
\[
V_\eps=- z_\eps\pf(\llb(0,1)\rrb\times (T\restr Q)) = V.
\]
From the explicit form of $u$ we see that $V$ is a polyhedral current. Indeed, the polyhedral current $T\restr Q$ can be decomposed as a linear combination of closed convex oriented $k$-polyhedrons $T_S=\tau(S,\xi,1)$ with $a\not \in S$ and $u(S)\subset L$ for some  $(j-1)$ face $L$. Then, for $\eps>0$ small enough, 
\[
z_\eps \pf(\llb(0,1)\rrb\times (T_S))= \tau(\tilde S,\tilde \xi,1),
\]
 where $\tilde S$ is the convex hull of $S\cup u(S)$ and $\tilde \xi:=|\zeta|^{-1}\zeta$, $\zeta:=(u(y)-y)\wedge \xi$ for some $y\in S$.

 Similarly, if $\pt T\restr Q$ is a polyhedral current then $U$ is a polyhedral current. This ends the proof of Lemma~\ref{lem1}.

\section{Proof of Theorem~\ref{thm1}}
\label{S4}
Before coming to the proof we set some notation and state a covering lemma.
\subsection{Notation for closed $k$-cubes and a covering lemma}
\label{S41}
Given  $x\in \R^n$, $\ell>0$ and $e^\pl=\{e_1,\cdots,e_k\}\subset \R^n$ an orthonormal family, we note $F=F_{x,\ell,e^\pl}$ the $k$-dimensional closed cube centered in $x$
\[
F= x+ \lt\{\sum_{j=1}^k t_j e_j : -\ell/2\leq t_j \leq \ell/2\mbox{ for }j\in \{1,\cdots,k\}\rt\}.
\] 

Conversely, given the $k$-cube $F$, we note $x_F=x$, $\ell_F=\ell$, $e_F=e$.  For $\lambda>0$ we note $\lambda F=F_{x,\lambda\ell,e}$ the cube with same center and orientation as $F$ and with side length $\lambda\ell_F$. To each $k$-cube $F$, we associate a family $e^\perp_F=\{e_{k+1},\cdots,e_n\}$ so that $(e_1,\cdots,e_n)$ form an orthonormal basis. For $\delta>0$, we define the closed $n$-dimensional set $F^\delta:= F+F_{0,\delta,e^\perp_F}$. % \delta \lt\{\sum_{j=k+1}^n t_j e_j : -1\leq t_j \leq 1\mbox{ for }j\in \{k+1,\cdots,n\}\rt\}.\
Equivalently, 
\[
F^\delta =x+\lt\{\sum_{j=1}^n t_j e_j: |t_j| \leq \ell/2\mbox{ for }j\in \{1,\cdots,k\}, \, |t_j |\leq \delta/2 \mbox{ for }j\in \{k+1,\cdots,n\}\rt\}.
\]
%We note $\lt<F\rt>$ the $k$-dimensional affine subspace generated by $F$ and by $F'$ the relative interior of $F$ in $\lt<F\rt>$. 

In the sequel we deal with coverings by cubes with possibly different orientations. For this we need Morse's version of the Besicovitch covering lemma~\cite{Morse47}. Actually, we use a corollary of the (Morse)-Besicovitch covering lemma (see~\cite[Theorem 2.19]{Am_Fu_Pal}).
\begin{lemma}[Morse-Vitali-Besicovitch covering]
\label{lemMB}
Let $\mu$ be a positive Radon measure over $\R^N$ and let $A\subset \R^n$ such that $\mu(\R^N\bks A)=0$. For every $x\in A$, let $\mc{F}_x$ be a family of closed subsets of $\R^n$ that contain $x$ and note $\mc{F}:=\cup_x\mc{F}_x$.\\
We assume that $\mc{F}$ is a fine covering of $A$, that is, for every $x\in A$ and for every $\rho>0$, 
\[\{G \in \mc{F}_x :\diam G<\rho \}\neq\void.\]
We also assume that $\mc{F}$ satisfy a $\lambda$-Morse property: there exists $\lambda>0$ such that for every $x\in A$ and every $F\in \mc{F}_x$,
\[
B_\rho(x)\subset F\subset B_{\lambda\rho}(x),\quad \mbox{for some $\rho>0$ with moreover $F$ star-shaped with respect to $B_\rho(x)$.}
\]
Then, for every $\eps>0$,  there exists a finite subset $\mc{F}_\eps\subset \mc{F}$ such that the elements of $\mc{F}_\eps$ are disjoint and $\mu(\R^N\bks \bigcup\mc{F}_\eps)<\eps$.
\end{lemma}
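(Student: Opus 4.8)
The statement is essentially a known consequence of Morse's covering theorem, and the plan is to deduce Lemma~\ref{lemMB} from the Morse extension of the Besicovitch covering theorem \cite{Morse47} in the same way the Vitali--Besicovitch theorem \cite[Theorem~2.19]{Am_Fu_Pal} is deduced from the Besicovitch theorem for balls, adding one elementary truncation step to pass from a countable to a finite subfamily. \emph{Step~1 (Morse extraction).} The $\lambda$-Morse property says exactly that every $F\in\mc{F}_x$ contains a ball $B_\rho(x)$, is contained in $B_{\lambda\rho}(x)$ and is star-shaped with respect to $B_\rho(x)$, so in particular $\diam F\le 2\lambda\rho$; this is the class of sets covered by Morse's geometric lemma. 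It provides an integer $\zeta=\zeta(N,\lambda)$ such that from the fine cover $\mc{F}$ one extracts a countable subfamily which still covers $A$ and splits into at most $\zeta$ subfamilies of pairwise disjoint sets. I would simply invoke \cite{Morse47} for this; it is the only nontrivial input.

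\emph{Step~2 (Vitali iteration).} Here I use that $\mu(\R^N)<\oo$ (this is implicit in the conclusion, since each element of $\mc{F}_\eps$ is bounded, and it holds in all our applications). Applying Step~1 to $A$, the at most $\zeta$ disjoint subfamilies cover $A$, so by pigeonhole one of them, $\mc{G}$, satisfies $\mu(\bigcup\mc{G})\ge\mu(A)/\zeta$, and since $\mc{G}$ is countable and disjoint, finitely many members $F_1,\dots,F_m$ of $\mc{G}$ satisfy $\mu(F_1\cup\dots\cup F_m)\ge\mu(A)/(2\zeta)$. Setting $K_1:=F_1\cup\dots\cup F_m$, which is closed, and $A_1:=A\setminus K_1$, we get $\mu(A_1)\le(1-\tfrac1{2\zeta})\mu(A)$; moreover, for every $x\in A_1$ one has $\mathrm{dist}(x,K_1)>0$, so the subfamily of those $F\in\mc{F}$ disjoint from $K_1$ is still a fine cover of $A_1$ with the $\lambda$-Morse property, and the argument repeats inside $\R^N\setminus K_1$. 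Iterating yields pairwise disjoint $F_1,F_2,\dots\in\mc{F}$ with $\mu(A\setminus\bigcup_i F_i)\le(1-\tfrac1{2\zeta})^k\mu(A)$ for every $k$, hence $\mu(A\setminus\bigcup_i F_i)=0$; since $\mu(\R^N\setminus A)=0$ too, $\mu(\R^N\setminus\bigcup_i F_i)=0$.

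\emph{Step~3 (truncation and conclusion).} By disjointness $\sum_i\mu(F_i)=\mu(\bigcup_i F_i)\le\mu(\R^N)<\oo$, so there is $m$ with $\sum_{i>m}\mu(F_i)<\eps$; since $\R^N\setminus\bigcup_{i\le m}F_i$ is the disjoint union of $\R^N\setminus\bigcup_i F_i$ and $\bigcup_{i>m}F_i$, this gives $\mu(\R^N\setminus\bigcup_{i\le m}F_i)<\eps$, and $\mc{F}_\eps:=\{F_1,\dots,F_m\}$ is the required finite disjoint subfamily. The one genuine obstacle is Step~1: one must know that star-shapedness with respect to an inscribed ball together with the eccentricity bound $\lambda$ is enough to run the combinatorial core of the Besicovitch argument (bounding, by a constant depending only on $N$ and $\lambda$, the number of such sets that can share a common point in a ``badly ordered'' configuration). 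This is exactly what Morse proved, and with it Steps~2 and~3 are routine measure theory.
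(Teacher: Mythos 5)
Your proof is correct and reproduces the standard deduction of a Vitali-type covering theorem from the Besicovitch (here Morse) covering lemma; the paper itself gives no proof but refers to \cite{Morse47} and \cite[Theorem~2.19]{Am_Fu_Pal}, and the three-step argument you write --- Morse extraction of boundedly many disjoint subfamilies, pigeonhole-plus-exhaustion, truncation to a finite subfamily --- is exactly what those references contain. Your observation that finiteness of $\mu$ is needed is a genuine and useful point: as stated the lemma is slightly too generous, since if $\mu(\R^N)=\oo$ then no \emph{finite} disjoint subfamily $\mc{F}_\eps$ (whose union is bounded, because each $F$ sits inside some $B_{\lambda\rho}(x)$) can leave a remainder of small $\mu$-measure. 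In the only place the lemma is invoked (Section~\ref{S42}), $\mu=h(\theta)\,\H^k\restr M$ is finite because $\MM_h(R)<\oo$, so no harm is done, but the hypothesis $\mu(\R^N)<\oo$ should really be made explicit in the statement.
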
 

\subsection{Pushing forward (most of) $T$ on $k$-cubes}
\label{S42}
Let $R=\tau(M,\xi,\theta)\in \RR_k(\R^n)$ with $\pt R\in \PP_{k-1}(\R^n)$ and $\MM_h(R)<\oo$ as in the statement of Theorem~\ref{thm1}.
We first show that we can assume that most of the $h$-mass of $R$ lies on a finite set of disjoint $k$-cubes. 
\begin{lemma}
\label{lemS42}
For every $\eta>0$, there exists a compact set $K$ which is a finite union of disjoint closed $k$-cubes, with $K\cap\, \supp \pt R=\void$, there exist $\tilde R\in \RR_k(\R^n)$ and $\tilde V\in \RR_{k+1}(\R^n)$ such that $R=\tilde R+\pt \tilde V$ with 
\[
\MM_h(\tilde V)<\eta,\qquad \MM_h(\tilde R)<\MM_h(R)+\eta,\qquad  \MM_h(\tilde R \restr [\R^n\bks  K])<\eta,\qquad\supp \tilde V\subset \supp R +B_{\eta}.
\]
\end{lemma}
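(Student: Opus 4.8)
The plan is to realize Lemma~\ref{lemS42} as a single application of the Morse-Vitali-Besicovitch covering lemma (Lemma~\ref{lemMB}) tailored to the multiplicity-weighted measure $\mu:=\MM_h\text{-density of }R$, i.e. $\mu:=\H^k\restr(h(\theta)\,\chi_M)$, followed by one local deformation per selected cube via Lemma~\ref{lem1}. First I would fix a small $\rho_0>0$ with $\rho_0<\tfrac14\,\mathrm{dist}(\supp R,\supp\pt R)$ is \emph{not} what we want since $\supp\pt R\subset\supp R$; instead I only insist that each selected cube avoid $\supp\pt R$, which is possible because $\supp\pt R$ is a finite polyhedral set and $\mu(\supp\pt R)=0$ after discarding the (at most $(k-1)$-dimensional, hence $\H^k$-null) part of $M$ lying in it — more precisely, since $\pt R$ is polyhedral it lies in a finite union $Y$ of $(k-1)$-planes, and the good points are those $x\in M$ with $x\notin Y$, a set of full $\mu$-measure.

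The heart is the covering step. At $\H^k$-a.e. point $x\in M$ (a density point of $M$ with approximate tangent plane $T_xM$ and a Lebesgue point of $h\circ\theta$), for every small $\rho$ I would pick the closed $k$-cube $F=F_{x,\rho,e^\pl}$ with $e^\pl$ an orthonormal basis of $T_xM$, and then thicken it to $F^\delta$ with $\delta$ chosen \emph{much smaller than} $\rho$ (say $\delta=\delta(F)\le\eps_F\rho$ for a summably small sequence $\eps_F\dw0$); the family $\mc F_x$ consists of these cubes $F$ for $\rho$ small. Rectifiability guarantees this is a fine cover of $M$ (up to an $\H^k$-null, hence $\mu$-null, set) and the cubes satisfy the $\lambda$-Morse property with $\lambda=\lambda(n)$. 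Applying Lemma~\ref{lemMB} to $\mu$ on $A=M\setminus Y$ with parameter $\eta/2$ produces a \emph{finite} disjoint subfamily $\{F_1,\dots,F_N\}$ with $\mu(\R^n\setminus\bigcup F_i)<\eta/2$; set $K:=\bigcup_i F_i$, which by construction is a finite union of disjoint closed $k$-cubes disjoint from $\supp\pt R$. This already gives the key concentration estimate $\MM_h(R\restr(\R^n\setminus K))<\eta/2$.

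Next I would \emph{not} alter $R$ away from $K$, so I cannot literally take $\tilde R=R$ because the statement also wants $\MM_h(\tilde R\restr(\R^n\setminus K))<\eta$ \emph{and} $R=\tilde R+\pt\tilde V$. The clean move is to take $\tilde R:=R$, $\tilde V:=0$: then trivially $\MM_h(\tilde V)=0<\eta$, $\MM_h(\tilde R)=\MM_h(R)<\MM_h(R)+\eta$, $\MM_h(\tilde R\restr(\R^n\setminus K))=\MM_h(R\restr(\R^n\setminus K))<\eta$, and $\supp\tilde V=\void\subset\supp R+B_\eta$. So in fact the lemma as stated is an immediate consequence of the covering step alone; the role of the thickening $F^\delta$ and of Lemma~\ref{lem1} is reserved for the \emph{next} lemma in the paper (pushing the mass inside each $F_i$ onto the $k$-skeleton of a grid adapted to $F_i^\delta$). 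I would therefore present the proof as: (i) reduce to density/Lebesgue points of $h\circ\theta$ off $Y\supset\supp\pt R$; (ii) build $\mc F=\bigcup_x\mc F_x$ of adapted thin slabs and verify fineness + Morse; (iii) invoke Lemma~\ref{lemMB} with $\mu$ and threshold $\eta$ to extract finite disjoint $\{F_i\}$; (iv) set $K=\bigcup F_i$, $\tilde R=R$, $\tilde V=0$ and read off all four estimates.

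The only genuine subtlety — the ``main obstacle'' — is arranging that the selected cubes are \emph{disjoint} (not merely finitely overlapping) while still capturing all but $\eta$ of the $h$-mass; this is exactly what the corollary of Besicovitch (Lemma~\ref{lemMB}) buys us, provided the family is fine and $\lambda$-Morse, so the real work is the routine but careful verification that $k$-cubes tangent to $M$ at density points, thickened in the normal directions, form such a family and that $\mu$ is a Radon measure with $\mu(\R^n\setminus M)=0$ and $\mu(Y)=0$. I would also remark that choosing $\delta(F_i)$ small will matter only later, so here it suffices to record one admissible choice (e.g. $\delta(F_i)<2^{-i}\min(\eta,\ell_{F_i},\mathrm{dist}(F_i,\supp\pt R))$) and carry it along; no estimate in Lemma~\ref{lemS42} itself depends on it.
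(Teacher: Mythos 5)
The proposal has a genuine gap: you cannot take $\tilde R=R$ and $\tilde V=0$. The statement asks for a finite union of \emph{flat} $k$-cubes $K$ such that $\MM_h(\tilde R\restr(\R^n\setminus K))<\eta$. But $R$ is carried by a curved $k$-rectifiable set $M$, and a closed $k$-cube $F$ tangent to $M$ at its center generically intersects $M$ in an $\H^k$-null set — so $\MM_h(R\restr K)$ may well be $0$, regardless of how cleverly $K$ is chosen. The covering step alone can never produce the concentration estimate for $R$ itself; the current must be \emph{deformed} to actually sit on the cubes. That is why the lemma requires a decomposition $R=\tilde R+\pt\tilde V$ with $\tilde V\neq0$.

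There is also an internal inconsistency in your covering step. You put the $k$-cubes $F$ (or their very thin slabs $F^\delta$ with $\delta\ll\ell_F$) into $\mc F_x$. Neither choice satisfies the $\lambda$-Morse hypothesis of Lemma~\ref{lemMB} with a uniform $\lambda$: a $k$-cube has empty interior in $\R^n$, and a slab with $\delta\ll\ell_F$ cannot contain a ball of radius comparable to its diameter. The paper sidesteps this by using the honest $n$-cubes $F^{\ell_F}$ (i.e.\ $\delta=\ell_F$), for which $\ov B_{\ell_F/2}(x_F)\subset F^{\ell_F}\subset\ov B_{\sqrt n\,\ell_F/2}(x_F)$. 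But then the disjoint finite subfamily given by Lemma~\ref{lemMB} concentrates $\mu$ on $\bigcup_i F_i^{\ell_{F_i}}$ — a union of $n$-dimensional boxes — not on $K=\bigcup_i F_i$, so you still cannot conclude the estimate for $K$ without further work.

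The missing idea is the second step of the paper's proof: for each selected box $F^{\ell_F}$, use the $C^1$ regularity of (most of) $M$ to write $M\cap F^{\ell_F}$ as a small graph $g_F$ over $F$, build the Lipschitz flattening map $u_F$ that collapses this graph onto $F$ (cut off near $\pt F^{\ell_F}$), and invoke the homotopy formula to get $R=u_F\pf R+\pt\what V_F$ with controlled $h$-mass of $\what V_F$ and of the error $\MM_h(u_F\pf R)-\MM_h(R)$ (the latter via the Lebesgue-point property of $h\circ\theta$ and the smallness of $\|Dg_F\|_\oo$). Iterating over the disjoint boxes produces $\tilde R$ and $\tilde V$. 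Note this is \emph{not} an application of the local deformation Lemma~\ref{lem1}, which deforms onto cubical skeletons and is reserved for the later ``cleaning'' step; here the deformation is the graph flattening. In short: your Step~(iv) is wrong, the correct step is a genuine push-forward, and the thickness $\delta=\ell_F$ is forced, not a free parameter.
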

\begin{proof}~Let $\eps\in(0,1/2)$ be a small parameter that will be fixed at the end of the proof.\smallskip\\
{\it Step 1. Most of the $h$-mass of $T$ lies on a finite union of $C^1$-graphs over small $k$-cubes.}\\
 Since $R=\tau(M,\xi,\theta)$ is rectifiable with $\MM_h(R)<\oo$, the measure
\[%\be\label{defmu} 
\mu:=h(\theta)\,\H^k\restr M
\]%\ee
is a rectifiable measure and there exists a compact, orientable, $k$-manifold ${\cal N}$ of class $C^1$ with
\[%\be\label{MostOfE} 
\mu (\R^n\bks {\cal N})\,<\, \eps.
\]%\ee    
%Since ${\cal N}$ is $C^1$ and orientable, there exists a continuous mapping $x\in {\cal N}\mapsto\xi(x)\in \Lambda_k(\R^n)$ such that $\xi(x)$ is a unit simple $k$-vector orienting ${\cal N}$ at $x$. 
Moreover, since $\pt R$ is a $(k-1)$-polyhedral current, we have $\mu(\supp \pt R+B_\rho)\to 0$ as $\rho\dw0$. Removing from ${\cal N}$ (if necessary) a small neighbourhood of $\pt R$ we can assume 
\be
\label{loindubord}
d({\cal N},\supp \pt R)>0. 
\ee
Extending $\theta$ by $0$ on ${\cal N}\bks M$, we have $h\circ\theta\rest{{\cal N}}\in L^1({\cal N},\H^k)$ and $\H^k({\cal N}\bks A)=0$ where $A\subset {\cal N}$ denotes the set of Lebesgue points of the mapping $h\circ\theta\rest{{\cal N}}$. In particular:
\begin{property}\label{property}
 for every $x\in A$ there exists $\delta_x>0$ with (recall~\eqref{loindubord}) 
\be
\label{ldb2}
\delta_x<\eps,\qquad  \sqrt{k} \delta_x/2<d(x,\supp \pt T)
\ee 
such that: for all $k$-cube $F$ tangent to ${\cal N}$ at $x$ with $x_F=x$ and side length $\ell_F\leq \delta_x$, there hold: 
\begin{enumerate}
\item  $F^{\ell_F}\cap {\cal N}$ is the graph of a $C^1$ function $g_F :F\to \vect e_F^\perp$ such that $g_F(x)=0$ and $\|Dg_F\|_\oo<\eps/\sqrt{k}$. In particular, $F^{\ell_F}\cap {\cal N}\subset F^{\eps\ell_F}$.
\item  Noting $\mc{G}_L:=\{y+g_F(y):y\in L\}$ the graph of $g_F\rest{L}$ for $L\subset F$, we have 
\be
\label{q}
\MM_h\lt(T\restr  \mc{G}_{F\bks(1-\eps)F}\rt)\ <\ 2k\eps\,\MM_h(T\restr \mc{G}_F).
\ee
\end{enumerate}
\end{property}
The first point comes from the $C^1$ regularity of ${\cal N}$. The second point is a consequence of the fact that $x$ is a Lebesgue point of $h\circ \theta\rest{{\cal N}}$. Indeed, using the parameterization $y\in F\mapsto y+g_F(y)$ of ${\cal N}\cap F^{\ell_F}_x$,~\eqref{q} rewrites as
\[
\label{q2}
\int_{F\bks(1-\eps)F}f(y)\, dy \leq\ 2k\eps\,\int_F  f(y)\, dy,
\]
with $f(y):=h( \theta(y+g_F(y)))\sqrt{1+|Dg_F(y)|^2}$. Since $x_F$ is a Lebesgue point of $f$ and $Dg_F(y)\to0$ as $F\ni y\to x_F$, this inequality holds true for $\delta_x$ small enough.\medskip\\

Let us call $\mc{F} $ the family of the closed $n$-cubes $F^{\ell_F}$ with $x_F\in A$ and $\ell_F < \delta_{x_F}$. These cubes are convex and satisfy the Morse condition, indeed
\[
\ov B_{\ell_F/2}(x_F)\subset F^{\ell_F} \subset \ov B_{\sqrt{n}\ell_F/2}(x_F).
\] 
Moreover, given such $F^{\ell_F}\in \mc{F}$, we have $\lambda  F^{\ell_F}\in \mc{F}$ for  $0<\lambda<1$ and  
the family $\mc{F}$ is a fine cover of $A$. Applying the  Morse-Besicovitch covering lemma~\ref{lemMB} to the measure $\mu\restr A$, there exists a finite subset $\mc{F}_\eps$ of $\mc{F}$ such that the elements of $\mc{F}_\eps$ are disjoint and 
\be\label{estimOnDc}
\mu(A\bks D) <\eps\qquad\mbox{with}\quad D:=\bigcup_{F^{\ell_F}\in\mc{F}_\eps}F^{\ell_F}.
\ee
Moreover, removing the  elements $F^{\ell_F}$ such that $\mu(F^{\ell_F})=0$ we can assume that 
\be\label{Feps_support}
F^{\ell_F}\cap \supp R\neq \void\quad\mbox{for every }F^{\ell_F}\in \mc{F}_\eps.
\ee
{\it Step 2. Pushing the graphs of $g_F$ onto the $k$-cubes $F$.}\\

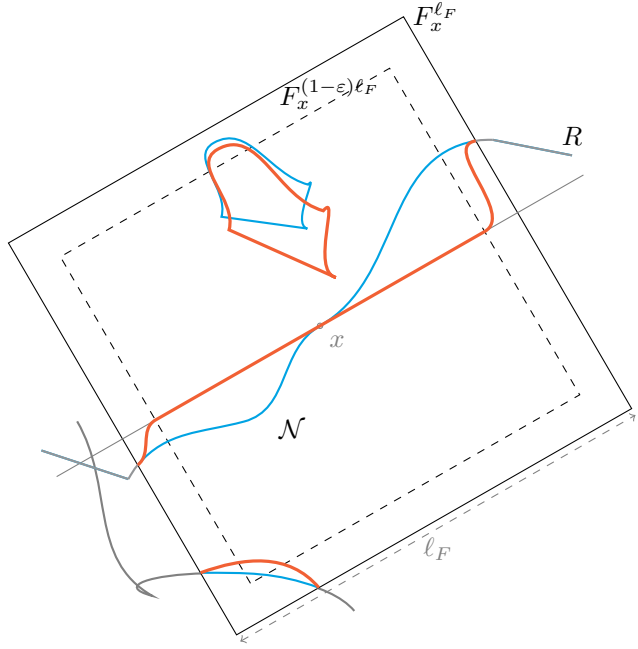
\begin{figure}
\centering
\begin{tikzpicture}[rotate=30]
\draw[thick,gray]  (-1.5,-3.5) to[out=100] (-3.7,-2) to[out=120,in=270](-3.4,.5);
\fill[white] (-3,-3)rectangle(-1,-2);
\draw[dashed] (-2.5,-2.5) rectangle (2.5,2.5)node[anchor=  north east]{$F^{(1-\eps)\ell_F}_x$};;
\draw (-3,-3) rectangle (3,3)node[anchor= west]{$F^{\ell_F}_x$};
\draw (-1,-1) node {$\mathcal{N}$};
\draw[gray] (-4,0)--(0,0)circle (1pt) node[anchor=north west]{$x$}--(4,0);
\draw[gray,dashed,<->] (-3,-3.1)--  (0,-3.1) node[anchor=north]{$\ell_F$}--(3,-3.1);

% R
\draw[thick,Cerulean]  (.5,1.2) -- (-.4,1.9) to[in=180](0,2.8) to[out=0](.8,1.7) to[out=20](.5,1.2);
\draw[thick,Cerulean]  (-4,.4)--(-3.2,-.5) (3.2,1)--(4,.3) node [anchor = south] {\color{black}{$R$}};
\draw[thick,Cerulean]  (-3,-.4) to[out=20,in=165] (-1.5,-.6) to [out=345,in=180]  (0,0) to[out=0,in=160]  (3,1.1);

%\draw[thick,Cerulean]  (-1.5,-3.5) to[out=100] (-3.7,-2);
\draw[thick, Cerulean]  	(-1.75,-3) to[in=-30,out=130] (-3,-2.05);

\draw[thick,Gray]  (-4,.4)--(-3.2,-.5) (3.2,1)--(4,.3) ;
\draw[thick,Gray]  (-3.2,-.5) to[out=30,in=200](-3,-.4)  (3,1.1) to[out=-20,in=150] (3.2,1)  ;

% Projection
\draw[very thick, RedOrange] 	(.5,.45) -- (-.4,1.7) to[in=180](0,2.7) to[out=0](.8,1.3) to[out=20](.5,.45);
\draw[very thick, RedOrange]  	(-1.75,-3) to[in=-10,out=100] (-3,-2.05);
\draw[very thick, RedOrange]  (-3,-.4) to[out=8,in=180]  (-2.5,0)--(2.5,0) to[out=0,in=160] (3,1.1) ;
\end{tikzpicture}
\label{fig4}\caption{Representation of the projection step described in \textit{Step 2}. In blue and gray the original support of the rectifiable current $R$, in orange and gray the deformed one.}
\end{figure}
Let us introduce a smooth cut-off function $\chi:[-1/2,1/2]^n\to[0,1]$ such that $\chi\equiv 1$ on $(1-\eps)[-1/2,1/2]^n$, $\chi\equiv 0$ on the boundary $\de [-1/2,1/2]^n$ and $\|D\chi\|_\oo\leq 4/\eps$.\\
Let $F^{\ell_F}\in \mc{F}_\eps$ and let $F$ be the associated closed $k$ cube tangent to ${\cal N}$ at its center. Up to a change of frame, we assume $x_F=0$ and $F=[-\ell_F/2,\ell_F/2]^k\times \lt\{0_{\R^{n-k}}\rt\}$, so that $F^{\ell_F}=[-\ell_F/2,\ell_F/2]^n$. \\
For $y\in \R^n$, we write $y=(y^\pl,y^\perp)$ its decomposition in $\R^k\times\R^{n-k}$. With this notation we define the diffeomorphism $u_F:\R^n\to\R^n$ as  
\[
u_F(y):=
\begin{cases}
\lt(y^\pl,y^\perp -\chi(y/\ell_F)g_F(y^\pl)\rt)& \mbox{if } y\in  F^{\ell_F},\\
\qquad y & \mbox{if }y\not\in  F^{\ell_F}.
\end{cases}
\]
This mapping is Lipschitz with $\|Du_F\|_\oo\leq C$ (notice that from the first point of Property~\ref{property}, we have $\|g_F\|_\oo\leq \eps\ell_F$).
 We set, 
\[
\what R_F:=u_F\pf R.
\]
Since $u_F=\Id$ in $ \ov{(F^{\ell_F}_x)^c}$, we have  $\supp(\what R_F-R)\subset   F^{\ell_F}_x$ and by~\eqref{estimPFLip}, 
\be\label{EstimhatTFc}
\MM_h(\what R_F\restr[  F^{\ell_F}_x\bks F])\, \leq\, C \MM_h(R\restr[ F^{\ell_F}_x\bks \mc{G}_F]).
\ee
Taking into account $D[\chi(y/\ell_F)]\equiv 0$ on $(1-\eps)F$ and~\eqref{q}, we also have
\be\label{EstimhatTrestF}
\MM_h(\what R_F\restr F)\, \leq\, (1+\eps) \MM_h(R\restr [{\cal N}\cap  F^{\ell_F}_x]).
\ee

We also define $z_F:(t,y)\in[0,1]\times\R^n\mapsto t u_F(y)+(1-t) y\in \R^n$. By~\eqref{ldb2}, $\pt R\restr  F^{\ell_F}_x=0$ and since $z_F(t,y)=y$ in $[0,1]\times ( F^{\ell_F})^c$, the homotopy formula~\eqref{homform} reduces to 
\[
R\, =\, \what R_F + \pt \what V_F,\qquad\mbox{with }\what V_F:=z_F\pf(\llb(0,1)\rrb \times T).
\]
By construction, $\supp \what V_F\subset  F^{\ell_F}_x$ and from~\eqref{estimPFLip}, we have
\[
\MM_h(\what V_F)\, \leq \, \|Dz_F\|_\oo^{k+1} \MM_h\lt(R\restr \lt[  F^{\ell_F}_x\bks \mc{G}_{(1-\eps)F}\rt] \rt) + \|Dz\|_{L^\oo(\mc{G}_{(1-\eps)F})}^{k+1} \MM_h\lt(R\restr \mc{G}_{(1-\eps)F}\rt).
\]
Since $\|Dz_F\|_\oo\leq C$ and $\|Dz\|_{L^\oo(\mc{G}_{(1-\eps)F})}\leq C\eps$, this leads to
\be\label{EstimhatR}
\MM_h(\what V_F)\,\leq\, C\, \lt\{ \MM_h(R\restr [ F^{\ell_F}_x\bks {\cal N}])+ \eps\, \MM_h(R\restr [ F^{\ell_F}_x\cap {\cal N}])\rt\}. 
\ee
Repeating the construction for $F\in\mc{F}_\eps$, we obtain $R=\tilde R +\pt \tilde V$. The estimates~\eqref{EstimhatTFc}, \eqref{EstimhatTrestF} and \eqref{EstimhatR} lead to,
\[
\MM_h(\tilde R\restr (\Int K)^c),\quad \MM_h(\tilde R) -\MM_h(R),\quad \MM_h(\tilde  V) \,\leq\, C\lt\{ \MM_h(R\restr {\cal N}^c)+\eps\MM_h(R\restr {\cal N})\rt\}.
\]
Using~\eqref{estimOnDc} to estimate the first term in the right hand side, we obtain, 
\[
\MM_h(\tilde R\restr (\Int K)^c),\quad \MM_h(\tilde R) -\MM_h(R),\quad \MM_h(\tilde  V) \,\leq\,C(1+\MM_h(R))\eps.
\] 
Eventually, by construction $\supp \tilde V \subset \cup \{\ov{ F^{\ell_F}_x}: F^{\ell_F}_x\in {\cal F}_\eps\}$ and by~\eqref{Feps_support} this leads to $\supp \tilde V \subset \supp R+B_{2\sqrt{n}\eps}$. Choosing $\eps>0$ small enough, the lemma is proved.
\end{proof}

\subsection{Cleaning the neighborhood of the $k$-cubes of $K$}
\label{Sproof}
By Lemma~\ref{lemS42}, we can now assume that there exists a finite union  of closed disjoint  $k$-cubes, $K=F_1\cup \cdots\cup F_m$ such that  
\be
\label{assumption-Tmainlyonkcubes}
\MM_h(R\restr K^c)<\eta,
\ee
and $K\cap \supp \pt R=\void$. Using the notation of Section~\ref{S41}, there exist a positive integer $N$ such that noting $\delta_j:= \ell_{F_j}/ N$, the boxes $F_1^{\delta_1},\cdots,F_m^{\delta_m}$ are disjoints, 
\[%\be\label{smalldeltaj}
\max_{1\leq l\leq m} \delta_l<\eta,
\]%\ee 
and $\tilde K$ does not intersect $\pt R$  where we define
\be\label{tK}
\tilde K:=F_1^{\delta _1}\cup \cdots\cup F_m^{\delta _m}.
\ee 

Up to dilations and displacements, all these polyhedrons are of the form  
\[
F^*:=[-N,N]^k\times [-1,1]^{n-k}.
\]
More precisely, $F_l^{\delta_l}=\psi_l(F^*)$ with $\psi_l:\R^n\to\R^n$ diffeomorphism such that $D\psi_l\equiv \lambda_l O_l$ for some $\lambda_l>0,\ O_l\in \SO_n(\R)$.\\
Let $Q_0=(0,1)^n$ and for $j=0,\cdots, n$, let $\Q^{(j)}$  be the $j$ skeleton associated with the partition of $\R^n$ based on translates of $Q_0$ (this is the same notation as in the beginning of Section~\ref{S3}).  We note 
\[
\Q_{F^*}^{(j)}:=\lt\{Q\in \Q^{(j)} : Q\subset F^*\rt\}\ \mbox{ for $j=0,\cdots, n$\quad and }\ \Q_{F^*}:=\bigcup_{j=0}^n \Q_{F^*}^{(j)}.
\]
The elements of $\Q_{F^*}$ form a partition of $F^*$. Now, applying the mapping $\psi_l$, we obtain a similar decomposition of $F_l^{\delta_l}$, that is, noting  $\Q^{(j)}_l:=\{ \psi_l(Q):Q\in \Q_{F_1}^{(j)} \}$, $\Q_l:=\cup_j \Q^{(j)}_l$, $\Q_l$ is a partition of $F_l^{\delta_l}$. The elements of $\Q_l^{(j)}$ are open $j$-cubes with side length $\delta_l$ and for $0\leq j <n$, $Q^{(j)}_l$ is formed by the faces of the elements of $\Q^{(j+1)}_l$. Before applying the deformation lemma, let us introduce some notation 
\begin{eqnarray*}
\tilde\Q_l^{(j)}:=\{Q\in \Q_l^{(j)} : \ Q\subset \inter (F_l^{\delta_l})\}.%\qquad \tilde G_l^{(j)}:=\bigcup_{i=j+1}^n   \tilde \Q_l^{(i)}.
%\\ {\tilde S}_j^{(l)}\,:=\, F_j^{\delta_j}\bks \tilde G_j^{(l-1)}\,=\, \pt F_j^{\delta_j} \cup \{Q\in \tilde\Q_j^{(d)} : 1\leq d\leq l\}.
\end{eqnarray*}
%By definition,  \[ \tilde G_l^{(j)}=\cup\{Q\in \Q_l: \dim Q >j,\ Q\subset \inter(F_l^{\delta_l})\}.%\qquad \tilde \Q_l JJJ \]
Recalling the characterization~\eqref{caractomQ} of $\om_Q$, we see that 
\[%\be\label{caractomGj}
\om_l^{(j)}:=\cup \{Q\in \tilde \Q_l^{(i)}: j+1\leq i\leq n \}=\cup\{\om_Q:Q\in \tilde \Q_l^{(j)}\}. 
\]%\ee
We note 
\[
\Sigma_{l}^{(j)}:=F_l^{\delta_l} \bks \om_l^{(j)}=\pt F_l^{\delta_l}\cup\lt[\cup \{Q\in \tilde \Q_l^{(i)}:0\leq i \leq j\}\rt].
\]
We notice that
\be\label{Sigmalj1}
\Sigma_{l}^{(j)}\,\subset\, \Sigma_Q\cap F_l^{\delta_l}\qquad\mbox{ for every }Q\in \tilde \Q_l^{(j)}.
\ee
In the sequel, we apply successively the local deformation lemma to $T=R$ with respect to all the elements of $\tilde\Q_1^{(n)}$. We obtain a new current $ R^{n-1}$ which is supported on $\lt[\supp R^n \bks F_1^{\delta_1} \rt]\cup \Sigma_{1}^{(n-1)}$. By~\eqref{Sigmalj1}, this condition allows us to apply the local deformation lemma to $ R^{n-1}$ with respect to all the elements of  $\tilde\Q_1^{(n-1)}$. We then continue the deformations with respect to the elements of $ \tilde \Q^{(n-2)}_1$, $ \tilde \Q^{(n-3)}_1,\cdots$, up to $\tilde\Q_1^{(k+1)}$. Let us give some details and state the estimates. \\ 
\begin{figure}[!h]
\centering
\begin{tikzpicture}[scale=1.3]

\draw[ultra thick,ForestGreen,dashed] (-3,-1) grid (3,1);
\draw[thick,Gray]  (-4,.4)--(-3.2,-.5) (3.2,1)--(4,.3) node [anchor = south] {\color{black}{$R$}};
\draw[thick,Gray] 	(.5,.45) -- (-.4,1.7) to[in=180](0,2.7) to[out=0](.8,1.3) to[out=20](.5,.45);
\draw[thick,Gray]   (-3.2,-.5) to[out=30,in=188] (-3,-.4) (3,1.1)to[out=-30,in=150] (3.2,1) ;
\draw[thick,Gray]  (-3,-.4) to[out=8,in=180]  (-2.5,0)--(2.5,0) to[out=0,in=160] (3,1.1) ;
%\draw[thick,Gray] (-3.7,-2)to[out=120,in=270](-3.4,.5);
\draw[thick, RedOrange] (2.5,0) --(3,0) -- (3,1) --(2.8,1);
\draw[thick, RedOrange] (.1,1) --(.67,1);
\draw[thick, RedOrange](-3,-.4) --(-3,0) --  (-2.5,0);
\begin{scope}
\clip (-3,-1) rectangle (3,1);
%\fill[white] (-3,-1) rectangle (3,1);

%\draw[very thick] (-3,-1) rectangle (3,1);
\draw[gray] (-4,0)--(0,0)circle (1pt) node[anchor=north west]{\color{black}$x$}--(4,0);
\draw (3,-1) node[anchor= north]{$[-N,N]\times[-1,1]$};
\draw(-2.5,.5) node {\small$Q_0$} (2.5,-.5) node {\small $Q_{M-1}$};
\draw[thick,Cerulean] 	(.5,.45) -- (-.4,1.7) to[in=180](0,2.7) to[out=0](.8,1.3) to[out=20](.5,.45);
%\draw[thick,Cerulean]   (-3.2,-.5) to[out=30,in=188] (-3,-.4) (3,1.1)to[out=-30,in=150] (3.2,1) ;
\draw[thick,Cerulean]  (-3,-.4) to[out=8,in=180]  (-2.5,0)--(2.5,0) to[out=0,in=160] (3,1.1) ;
% first proj
\filldraw[RedOrange] (-2.5,-.6) circle (1pt)node[anchor= north west] {$a$};
% second proj
\filldraw[RedOrange] (.75,.6) circle (1pt)node[anchor= north ] {$a$};
% third proj
\filldraw[RedOrange] (2.2,.4) circle (1pt)node[anchor= south west] {$a$};
\end{scope}

\end{tikzpicture}
\label{fig5}\caption{Example of application of Lemma~\ref{lem1} in each cube in $\tilde Q^{(2)}_1$ for an $R\in\RR_1(\R)$. In dashed green we represent the set $\Sigma^{(1)}_1$. In blue and gray the original support of the rectifiable current $R$, in orange and gray the deformed one}
\end{figure}
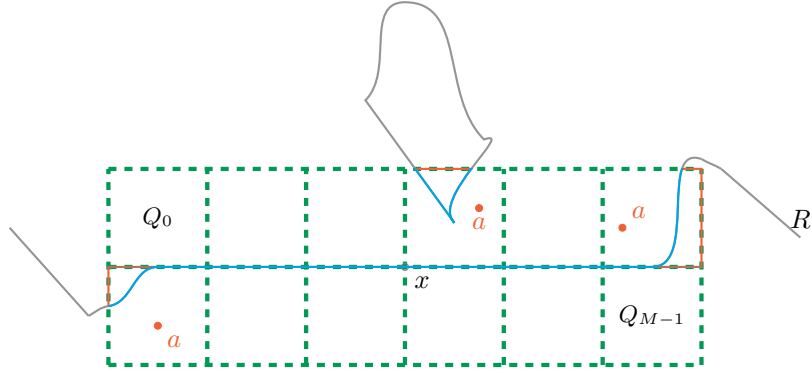
Let us number the cubes of $\tilde Q^{(n)}_1$ as $\{Q_0,\cdots,Q_{M-1}\}$. We apply successively Lemma~\ref{lem1} with $Q=Q_0,\, \cdots,\, Q=Q_{M-1}$. Using the notation~\eqref{PiQ}, we set
\[
R^{n-1}:=\Pi_{Q_{M-1}}\circ\cdots\circ\Pi_{Q_0}(R)
\]
From~\eqref{Id+supports} and using the fact that $\pt R \restr F_1^{\delta_1}=0$, we obtain the decomposition,
\be
\label{Id+supports-1}
R =  R^{n-1} + \pt  V^{n-1},\qquad \supp  V^{n-1}\, \subset\, F_1^{\delta_1}, \qquad \supp R^{n-1}\subset \lt[\supp R\bks F_1^{\delta_1}\rt]\cup \Sigma_1^{(n-1)},%=\lt[\supp \tilde T^n\bks F_1^{\delta_1}\rt]\cup {\tilde S}_1^{(n-1)}.  
\ee
with the estimates,
\begin{equation}\label{estimM1-1}
\begin{gathered}
\MM_h(R^{n-1}- R) \leq c\, \MM_h(R\restr (F_1^{\delta_1}\bks F_1)),\\
\MM_h(V^{n-1}) \leq c\,\eta\, \MM_h(R\restr(F_1^{\delta_1}\bks F_1)).
\end{gathered}
\end{equation}
If $k<n-1$, from the last property of \eqref{Id+supports-1} and \eqref{Sigmalj1}, we can apply successively Lemma~\ref{lem1} to $R^{n-1}$ with $Q$ running  over the elements of $ \tilde \Q^{(n-1)}_1$. We obtain the decomposition 
\be
\label{Id+supports-2}
R^{n-1} =  R^{n-2} + \pt  V^{n-2},\qquad \supp  V^{n-2}\, \subset\, F_1^{\delta_1}, \qquad \supp R^{n-2}\subset \lt[\supp R\bks F_1^{\delta_1}\rt]\cup \Sigma_1^{(n-2)},%=\lt[\supp \tilde T^n\bks F_1^{\delta_1}\rt]\cup {\tilde S}_1^{(n-1)}.  
\ee
with estimates similar to those in~\eqref{estimM1-1} we obtain
\[%\begin{equation}\label{estimM1-1bis}
\begin{gathered}
\MM_h(R^{n-2}- R^{n-1}) \leq c\, \MM_h(R^{n-1}\restr (F_1^{\delta_1}\bks F_1)),\\
\MM_h(V^{n-2}) \leq c\,\eta\, \MM_h(R^{n-1}\restr(F_1^{\delta_1}\bks F_1)).
\end{gathered}
\]%\end{equation}
Again, from the last property of~\eqref{Id+supports-2}, we see that  $\supp R^{n-2}\subset \Sigma_Q$ for every $Q\in \tilde \Q^{(n-2)}_1$ and, if $k<n-2$, we can proceed further applying the deformation lemma with respect to the elements of $\tilde\Q^{(n-2)}_1$. Continuing the argument up to $\tilde \Q^{(k+1)}_1$ and then repeating the construction in all the remaining boxes $F_2^{\delta_2},\cdots, F_m^{\delta_m}$ that form $\tilde K$ (recall the notation~\eqref{tK}), we obtain the decomposition 
\be
\label{Id+supports-3}
R = \tilde R + \pt \tilde V,\qquad \supp \tilde V\, \subset\,K, \qquad \supp {\tilde R}\subset \lt[\supp R\bks \tilde K\rt]\cup\lt[\bigcup_{l=1}^m \Sigma_l^{(k)}\rt].  
\ee
Returning to $F_l^{\delta_l}=\psi_l(F^*)$, we see that  $\Sigma_l^{(k)}=F_l$, so that 
\[%\be\label{Id+supports-3bis}
\supp {\tilde R}\subset \lt[\supp R\bks \Int \tilde K \rt]\cup K.  
\]%\ee
Moreover, by subadditivity,
\begin{equation}
\label{estimM1-2}
\begin{gathered}
\MM_h({\tilde R}-R) \leq C\, \MM_h(R\restr K^c)\ \stackrel{\eqref{assumption-Tmainlyonkcubes}}\leq\ C\eta,\\
\MM_h(\tilde V) \leq C\,\delta\, \MM_h(R\restr K^c)\ \stackrel{\eqref{assumption-Tmainlyonkcubes}}\leq\ C \delta \eta.
\end{gathered}
\end{equation}
Eventually, let $l\in\{1,\cdots,m\}$ and let $S:=\tilde R\restr \inter(F_l^{\delta_l})\in \RR_k(\R^n)$. By construction $S$ is supported in the $k$-skeleton 
\[
X:=\Sigma_l^{(k)}\setminus \pt F_l^{\delta_l}=\cup \{Q\in \tilde \Q_l^{(i)}:0\leq i \leq k\},
\]
and $\pt S$ is supported in $Y:=\ov X\cap \pt F_l^{\delta_l}$ which is a finite union of $(k-1)$ closed cubes.
By Lemma~\ref{lemConstancy} we conclude that $S$ is a polyhedral current. Therefore,
\be
\label{TildeTPolyhedral}
\tilde R\restr  \Int \tilde K \mbox{ is a $k$-polyhedral current}. 
\ee

\subsection{Deformation of the remaining parts and conclusion}
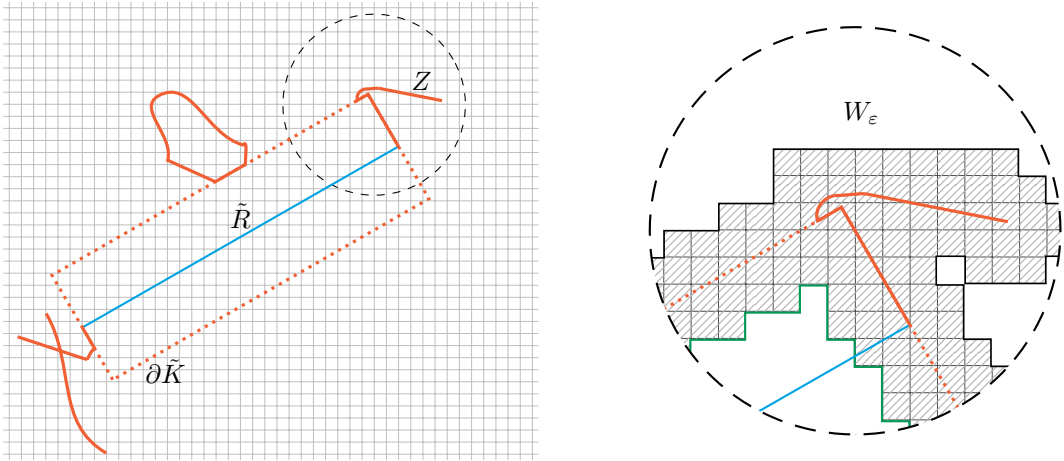
\begin{figure}[!h]
\begin{minipage}[c]{.45\textwidth}
\hspace{1.cm}
\begin{tikzpicture}[rotate=30,scale=0.8]
% grid 
 \draw[step=.2,gray!50!white,very thin,rotate=-30] (-3.9,-3.7) grid (4.9,3.9);
 \draw[dashed](3,.8) circle (1.5cm);
%R
\draw[thick,Cerulean]  (-3,-.4) |- (-2.5,0)--(2.5,0) -|(3,1)--(2.8,1) to[out=80,in=160] (3,1.1) ;
\draw[very thick,RedOrange]  (-4,.4)--(-3.2,-.5) (3.2,1)--(4,.3) node [anchor = south east] {\color{black}{$Z$}};
\draw[very thick,RedOrange] 	(.1,1) -- (-.4,1.7) to[in=180](0,2.7) to[out=0](.8,1.3) to[in=60,out=20](.67,1)--(.1,1);
\draw[very thick,RedOrange]   (-3.2,-.5) to[out=30,in=188] (-3,-.4) (3,1.1)to[out=-30,in=150] (3.2,1) ;
\draw[very thick,RedOrange]  (-3,-.4) -- (-3,0) (3,0) -|(3,1)--(2.8,1) to[out=80,in=160] (3,1.1) ;
\draw (0,0) node [anchor= south] {$\tilde R$};
\draw[very thick,RedOrange] (-3.7,-2)to[out=120,in=270](-3.4,.5);
\draw[very thick,RedOrange,dotted] (-3,-1) rectangle (3,1);
\draw(-2,-1)node[anchor=north]{\color{black}{$\pt\tilde K$}};
\end{tikzpicture}
\end{minipage}
\hfill
\begin{minipage}[c]{.45\textwidth}
\centering
\begin{tikzpicture}[rotate=30,scale=1.8]
  %  \clip (3,1) circle (1.7cm);

 \begin{scope}
    \clip (3,.8) circle (1.5cm);
     \clip(.9,-5)-- (1.07,1.46) -- ++ (-30:.2) -- ++ (60:.2)-- ++ (-30:.4)-- ++ (60:.2)-- ++ (-30:.4)-- ++ (60:.2)-- ++ (-30:.4)-- ++ (60:.4)-- ++ (-30:1.8)
     -- ++ (60:-.2)-- ++ (-30:.2)
   %-- ++ (60:-.6) -- ++ (-30:-.2)  -- ++ (60:-.2)  -- ++ (-30:-.4)  
 -- ++ (60:-.2)   -- ++ (-30:.2) -- ++ (60:-.4)   -- ++ (-30:-.2) -- ++ (60:-.2) -- ++ (-30:-.6) 
      -- ++ (60:.2) -- ++ (-30:-.2) -- ++ (60:-.2)-- ++ (-30:.2)-- ++ (60:-.4)-- ++ (-30:.2)-- ++ (60:-.2)-- ++ (-30:.2)-- ++ (60:-.4)-- ++ (-30:.2)--cycle;
% grid 
 \draw[step=.2,black!70!white,very thin,rotate=-30] (0,0) grid (5,5);
%\fill[white,opacity=.9] (-2.7,-.7) rectangle (2.7,.7);
%R
 \filldraw[very thick,pattern=north east lines, pattern color=white!40!gray ](.9,-5)-- (1.07,1.46) -- ++ (-30:.2) -- ++ (60:.2)-- ++ (-30:.4)-- ++ (60:.2)-- ++ (-30:.4)-- ++ (60:.2)-- ++ (-30:.4)-- ++ (60:.4)-- ++ (-30:1.8)-- ++ (60:-.2)-- ++ (-30:.2)
 %-- ++ (60:-.6) -- ++ (-30:-.2)  -- ++ (60:-.2)  -- ++ (-30:-.4)  
 -- ++ (60:-.2)   -- ++ (-30:.2) -- ++ (60:-.4)   -- ++ (-30:-.2) -- ++ (60:-.2) -- ++ (-30:-.6) 
 -- ++ (60:.2) -- ++ (-30:-.2)   -- ++ (60:-.2)-- ++ (-30:.2)-- ++ (60:-.4)-- ++ (-30:.2)-- ++ (60:-.2)-- ++ (-30:.2)-- ++ (60:-.4)-- ++ (-30:.2)--cycle;
  \filldraw[draw=ForestGreen,thick,fill=white](.9,-5)-- (1.01,.56) -- ++ (-30:.2) -- ++ (60:.2)  -- ++ (-30:.2) -- ++ (60:.2) -- ++ (-30:.4) -- ++ (60:.2)-- ++ (-30:.4) -- ++ (60:.2)  -- ++ (-30:.2) 
  -- ++ (60:-.4) -- ++ (-30:.2) -- ++ (60:-.2) -- ++ (-30:.2) -- ++ (60:-.4) -- ++(-30:.2) -- ++ (60:-1) --cycle;
\draw[thick,Cerulean]  (-3,-.4) |- (-2.5,0)--(2.5,0) -|(3,1)--(2.8,1) to[out=80,in=160] (3,1.1) ;
\draw[very thick,RedOrange]  (3.2,1)--(4,.3)  ;%node  {\color{black}{\small$Z$}};
\draw[very thick,RedOrange]  (3,1.1)to[out=-30,in=150] (3.2,1) ;
\draw[very thick,RedOrange]  (3,0) -|(3,1)--(2.8,1) to[out=80,in=160] (3,1.1) ;
\draw[very thick,RedOrange,dotted] (-3,-1) rectangle (3,1);
  \end{scope}
  \draw (3.4,1.4) node [anchor = south] {\color{black}{$W_\eps$}};
  \draw[thick,dash pattern={on 8pt off 5pt}] (3,.8) circle (1.5cm);
\end{tikzpicture}
\end{minipage}
\label{fig6}\caption{On the left we represent the finer grid for the third projection step. In orange we draw the set $Z:=(\supp \tilde R\bks \Int \tilde K)\cup \pt \tilde K$, in particular the dotted part is $\pt \tilde K$. In the right we show a detail of the drawing with the set $W_\eps$. The set $W_\eps$ is highlighted in striped gray and the set $ \pt W_\eps\cap \tilde K$ in dark green.}
\end{figure}

We continue the above construction by deforming $\tilde R$ in a neighborhood of $\tilde K^c$. In particular, we start with $\tilde R$ and $\tilde V$ satisfying~\eqref{Id+supports-3}--\eqref{TildeTPolyhedral}.\\
Let us introduce $Z:=(\supp \tilde R\bks \Int \tilde K)\cup \pt \tilde K$ and for $\eps>0$, $Z_\eps:=Z+{B_\eps}$ its $\eps$-neighborhood. Since $Z$ is closed, for any finite positive Borel measure $\lambda$, there holds $\lambda(Z)=\lim_{\eps\dw0} \lambda(Z_\eps)$. In particular, from~\eqref{assumption-Tmainlyonkcubes}\eqref{estimM1-2}, 
%we have 
%\[
%\MM_h(\tilde R\restr W_\eps)\leq \MM_h(R\restr K^c)+\MM_h(\tilde R-R)\ \stackrel{\eqref{assumption-Tmainlyonkcubes}\eqref{estimM1-2}}< \   C\eta.
%\]
%Now since $W$ is closed, for any finite positive Borel measure $\lambda(W)=\lim_{\eps\dw0} \lambda(W+B_\eps)$. Hence 
there exists $0<\eps<\delta$ such that 
\be
\label{NRJonWeps}
\MM_h(\tilde R\restr Z_{2\sqrt{n}\eps})\,< \,  C\eta.
\ee
We fix such $\eps$ and we now  consider the $j$-skeletons $\Q^{(0)},\cdots,\Q^{(n)}$ based on the cube $(0,\eps)^n$. We then introduce the open set 
\[
W_\eps:=\inter\lt(\ov{\cup \{Q\in\Q^{(n)} ,d(Q,Z)<\sqrt{n}\eps\} }\rt),
\]
We have $W_\eps\subset Z_{2\sqrt{n}\eps}$ and by~\eqref{NRJonWeps},
\[%\be\label{NRJonWeps2}
\MM_h(\tilde R\restr W_\eps)\,< \,  C\eta.
\]%\ee
For $j\in\{0,\cdots,n\}$, we note 
\[
\what \Q^{(j)}:=\{Q\in  \Q^{(j)} : Q\subset W_\eps\}.
\]
Now let us note $\what R^n:=\tilde R$ and let us introduce the union of cubes
\[
Y_\eps:=\inter\lt(\ov{\{Q\in \Q^{(n)}, \ov{Q}\subset \Int\tilde K\}}\rt).
\]
With this definition $W_\eps\cup Y_\eps$ covers $\tilde K$ and we have 
\[%\be\label{ptWeps}
 \pt W_\eps\cap \tilde K\ \subset Y_\eps\,\subset\, \ov{Y_\eps}\,\subset\,  \Int \tilde K.
 \]%\ee
Moreover, from~\eqref{TildeTPolyhedral}, 
\be
\label{TnrestYeps}
\what R^n \restr Y_\eps \in \PP_k(\R^n).
\ee
As in the previous subsection, we also introduce 
\[%\be\label{caractomGj}
\what  \om^{(j)}:=\cup \{Q\in \what  \Q^{(i)}: j+1\leq i\leq n \}=\cup\{\om_Q:Q\in \what  \Q^{(j)}\}, 
\]%\ee
and
\[
\what  \Sigma^{(j)}:=\ov{W_\eps} \bks \what \om^{(j)}=\pt W_\eps \cup\lt[\cup \{Q\in \what  \Q^{(i)}:0\leq i \leq j\}\rt].
\]
We have
\[%\be\label{Sigmalj2}
\what  \Sigma^{(j)}\,\subset\, \Sigma_Q\cap \ov{W_\eps}\qquad\mbox{ for every }Q\in \what  \Q^{(j)}.
\]%\ee
%%%We also introduce the notation
%%%\[
%%%\what G^{(l)}:=\cup  \{ Q \in \what \Q_j^{(d)}: l<d\leq n\}.
%%%\]
We perform the same steps as in Subsection~\ref{Sproof}. Starting with the current $\what R^n=\tilde R$, we apply recursively the local deformation lemma for $Q\in \what \Q^{(n)}$. We obtain the decomposition 
\begin{equation}\label{Id+supports-4}
\begin{gathered}
\what R^n=\what R^{n-1}+ \what U^{n-1} +\pt \what V^{n-1},\qquad \supp U^{n-1}\, \cup\, \supp V^{n-1}\, \subset\, \ov{W_\eps}, \\
\supp {\what R}^{n-1}\subset \lt[\supp{\what R}^{n} \bks \ov{W_\eps}\rt]\cup \what \Sigma^{(n-1)},
\end{gathered}
\end{equation}
with the estimates
\begin{equation}\label{M1-1}
\begin{gathered}
\MM_h(\what R^{n-1}-\what R^n)\,\leq\,c\, \MM_h(\what R^n\restr W_\eps),\qquad \qquad\MM_h(\what V^{n-1})\,\leq\,c\,\delta\, \MM_h(\what R^n\restr W_\eps),\\
\MM_h(\pt \what R^{n-1}-\pt \what R^n)\,\leq\, c\, \MM_h(\pt \what R^n\restr W_\eps),\qquad\qquad
\MM_h(\what U^{n-1})\,\leq\,c\, \delta\, \MM_h(\pt \what R^n\restr W_\eps).
\end{gathered}
\end{equation}
Notice that since $\pt \what R^n\restr W_\eps$ does not necessarily vanish, we have to take into account the component $\what U^n$. On the other hand, by assumption, $\pt\what R^n\in \PP_{k-1}(\R^n)$. Hence, by \eqref{ptTrestQ} of Lemma~\ref{lem1}, $\what U^n$ is a polyhedral current and we have 
\be
\label{ptTnm1}
\pt \what R^{n-1}=\pt \what R^n-\pt \what U^{n-1}\in \PP_{k-1}(\R^n).
\ee
%We  easily see that  
%\be
%\label{suppTnm1}
%\supp \what R^{n-1}\subset \ov{W_\eps}\cup \tilde K.
%\ee
Eventually, by~\eqref{ptTrestQ}~and~\eqref{TrestQ} of Lemma~\ref{lem1}, the property~\eqref{TnrestYeps}  also propagates, we have
\be
\label{Tnm1restYeps}
\what R^{n-1} \restr Y_\eps \in \PP_k(\R^n).
\ee

\medskip
After this first step, we apply the local deformation lemma to $\what R^{n-1}$ with respect to every $Q\in\what\Q^{(n-1)}$ and then with respect to every $Q\in \what\Q^{(n-2)},\cdots$ up to $\what Q^{(k+1)}$. At each step we obtain the properties corresponding to \eqref{Id+supports-4}--\eqref{Tnm1restYeps}. We end up with
\[
\tilde R=\what R^{k} + \what U +\pt \what V,\qquad\mbox{ with }\quad \supp  \what  U\cup \supp  \what V \subset \ov{W_\eps}\cup \tilde K.
\]
Moreover, 
\[
\supp \what R^{k}\subset \lt[\supp{\what R}^{n} \bks \ov{W_\eps}\rt]\cup \what \Sigma^{(k)},
\]
and we have the  estimates
\begin{eqnarray*}
\MM_h(\what R^{k}-\tilde R)\,  \leq\, C\, \MM_h(\tilde R \restr W_\eps),&\qquad&   \MM_h(\what V) \, \leq\, C\,\delta\, \MM_h(\tilde R \restr W_\eps),\\
\MM_h(\pt \what R^{k}-\pt \tilde R)\, \leq\, C\, \MM_h(\pt \tilde R\restr W_\eps),&\qquad&  C\, \MM_h(\pt \tilde R\restr W_\eps), \, \leq\, C\, \delta\,\MM_h(\pt \tilde R\restr W_\eps).
\end{eqnarray*}
By~\eqref{ptTrestQ} %and~\eqref{TrestQ} 
of Lemma~\ref{lem1}, 
\[
\what U\in \PP_k(\R^n).
\]
By construction, $\what R^k \restr W_\eps=\what R^k\restr (W_\eps\cap \what \Sigma^{(k)})$ and $\pt \what R^k\in \PP_{k-1}(\R^n) $, so that by Lemma~\ref{lemConstancy},
\[
\what R^k\restr W_\eps\in \PP_k(\R^n).
\]
Now, $\what R^k\restr W_\eps\cup Y_\eps)^c=0$, so we have to check that $\what R^k\restr Y_\eps\in \PP_k(\R^n)$. From~\eqref{TnrestYeps} we have $\what R^n\restr Y_n\in \PP_k(\R^n)$ and this property propagates by~\eqref{TrestQ}. We conclude that 
\[
\what R^k\in \PP_k(\R^n).
\]
Finally, we set 
\[
P:=\what R^k - \what U,\qquad V:=\tilde V + \what V. 
\]
We have $P\in \PP_k(\R^n)$ and the currents $P$ and $V$ satisfy the estimates stated in Theorem~\ref{thm1}. This concludes the proof of the theorem.

%%%
\section{The case $\MM_h\lesssim\MM$.}
\label{Sbeta00}
In this last part, we consider the case $\beta:=\sup_{\theta>0}h(\theta)/\theta<\oo$. Before proving Theorem~\ref{thm2}, we start with a description of the decomposition $T=R+T'$ introduced in~\eqref{wMMh} for the definition of $\widehat{\MM}_h$.
\subsection{Decomposition of finite mass flat chains into rectifiable and diffuse parts}
Assume that $T\in \FF_k(\R^n)$ has finite mass. The upper $k$-density of $T$ at a point $x\in \R^n$ is defined  as
\[
\Theta^*_k(T)(x)=\liminf_r \dfrac{\MM(T\restr B(x,r)}{r^k}.
\] 
Then, for $\eps\geq0$, we define the Borel set,
\[
X^\eps:=\lt\{x\in \R^n:\Theta^*_k(T)(x)>\eps\rt\} .
\] 
Since $\MM(T)<\oo$, the restriction of $T$ to any Borel set $X$   is a well defined flat chain and have all the desired properties --- see~\cite[Sec. 4]{Fleming66}. Noting $T\restr X\in \FF_k(\R^n)$ this restriction, we have in particular $\MM(T)=\MM(T\restr X)+\MM(T\restr X^c)$ (beware that Fleming uses the notation $T\cap X$ for $T\restr X$). We note 
\[
R^\eps:=T\restr X^\eps,\qquad\mbox{for $\eps\geq0$}.
\] 
We have $\MM(R^\eps)\leq \MM(T)$ and by a classical covering argument, for $\eps>0$, there holds $\H^k(X^\eps)\leq C \MM(T)/\eps$ where the constant $C$ only depends on $n$.  Consequently, $R^\eps$ has finite size and finite mass and by the rectifiability theorem of White~\cite[Proposition 8.2]{White2}, $R^\eps$ is rectifiable. Taking the limit $\eps\dw 0$, we see that $R:=R^0\in \RR_k(\R^n)$. Eventually, we set $T':=T-R$ and by construction, 
\be\label{Thetak}
\Theta^*_k(T')\equiv 0\mbox{ in }\R^n.
\ee
This is what we mean by $T'$ is a ``diffuse'' flat chain. The decomposition $T=R+T'$ is uniquely characterized by the three properties $R\in \RR_k(\R^n)$, $T'\in \FF_k(\R^n)$ satisfies~\eqref{Thetak} and $\MM(T)=\MM(R)+\MM(T')$.

\subsection{Proof of Theorem~\ref{thm2}}
We assume that $h:\R\to\R_+$ satisfies~\eqref{condf}, \eqref{condf2} and~\eqref{beta}. We set $T=R+T'$ with $T'\in \FF_k(\R^n), R\in \RR_k(\R^n)$ and  $\MM(R)+\MM(T')<\oo$. Let $\eps>0$.\\ 
First, we apply Proposition~\ref{Prop 2.6} to the rectifiable current $R$: we have
\[%\be \label{descript2}
R=P_1+U_1+\pt V_1,\quad\mbox{with}\quad  \MM_h(P_1)\ <\ \MM_h(R)+\eta\quad\mbox{and}\quad \MM(U_1)+\MM(V_1)\ <\ \eps.
\]%\ee
%Moreover, by construction, we can assume $\supp P_1\cup\supp U_1\cup\supp V_1\subset \supp R+B_\eps$.\\
Next, by~\cite[Theorem~4.1.23]{federer}, there exist $P\in \PP_k(\R^n)$, $U_1'\in \FF_k(\R^n)$ and $V_1'\in \FF_{k+1}(\R^n)$ such that 
\[
T'=P_1'+U_1'+\pt V_1',\qquad \MM(P_1')\ <\ \MM(T')+\eps\quad\mbox{and}\quad \MM(U_1')+\MM(V_1')\ <\ \eps.
\] 
%We also have $\supp P_1'\cup\supp U_1'\cup\supp V_1'\subset \supp T'+B_\eps$.
In fact the result stated in~\cite{federer} assumes that $T$ is compactly supported but the general case can be recovered easily.\footnote{ Consider a smooth contraction $u$ of $\R^n$ with range in $B_2$ and such that $u\equiv \Id$ in $B_1$ and define $u_r(x):=ru(x/r)$. For $r$ large enough  $\MM(u-u_r\pf T)<\eta$ and we can apply the result to the compactly supported flat chain $u_r\pf T$}
Setting $P=P_1+P_1'$,  $U=U_1+U_1'$ and $V=V_1+V_1'$, we have $T=P+U+\pt V$ with the estimates
\begin{eqnarray*}
&\MM(U)+\MM(V)\leq\MM(U_1)+\MM(V_1)+\MM(U_1')+\MM(V_1')<2\eps&\\
&\MM_h(P)\leq\MM_h(P_1)+\MM_h(P_1')\leq\MM_h(P_1)+\beta\MM(P_1')<\MM_h(R)+\beta\MM(T')+(1+\beta)\eps.&
\end{eqnarray*}
Choosing $\eps$ such that $(2+\beta)\eps<\eta$, the first part of the theorem is proved.\\
Eventually, if we assume that $\pt T$ is polyhedral, we have $\pt U=\pt T-\pt P\in P_{k=1}(\R^n)$. In particular, $U$ is a normal current and we can apply the deformation theorem of Federer and Fleming to $U$ (see~\cite{FedFlem60},~\cite[4.2.9]{federer} or \cite{KP2008}) to get the decomposition $U=P_2+U_2+\pt V_2$ with 
\[
\MM(P_2)+\MM(U_2)+\MM(V_2)<\MM(U)+\eps.
\]
Moreover since $\pt U$ is polyhedral, $U_2$ is polyhedral, so that setting $\tilde P=P+P_2+U_2\in \PP_k(\R^n)$ and $\tilde V=V+V_2$, we have the decomposition $T=\tilde P+ \pt\tilde V$ with 
\[
\MM(\tilde V)\leq \MM(V)+\MM(V_2)<3\eps,\qquad \MM_h(\tilde P)\leq\MM_h(P) +\beta \MM(P_2)< \MM_h(R)+\beta\MM(T')+(1+2\beta)\eps.
\]
Choosing $\eps>0$ small enough, we obtain the desired estimates.

\subsection*{Acknowledments}
B. Merlet is partially supported by the INRIA team RAPSODI and the Labex CEMPI (ANR-11-LABX-0007-01).

\bibliographystyle{plain}
\bibliography{DefTheorem}
\end{document}